\documentclass[11pt,a4paper,underruledhead]{paper}
\usepackage{enumerate,math}
\usepackage[mathscr]{euscript}

\pagestyle{titleinhead}
\bibliographystyle{acm}

\title[Grassmannians of lines...]{Grassmannians of lines defined in the geometry\\ of a pseudo-polarity}
\author{K. Pra{\.z}mowski and M. {\.Z}ynel}


\DeclareMathOperator{\Rad}{Rad}
\DeclareMathOperator{\wspolin}{{\bf L}}
\DeclareMathOperator{\Hrd}{Hrd}
\DeclareMathOperator{\srodek}{{\bf q}}
\DeclareMathOperator{\GF}{GF}

\let\goth\mathfrak
\let\cal\mathcal
\let\field\mathbb

\def\LineOn(#1,#2){\overline{{#1},{#2}\rule{0em}{1,5ex}}}
\def\inc{\mathrel{\rule{2pt}{0pt}\rule[-0.8pt]{1pt}{2ex}\rule{2pt}{0pt}}}
\def\ninc{\mathrel{\not\mkern3mu\inc}}

\def\adjac{\mathrel{\sim}}

\def\penc{{\bf p}}
\def\pencr{\penc_{\rm r}}
\def\pencraf{\pencr^{\rm af}}

\def\TRG{\mathop{{\mbox{\boldmath$\Delta$}}}}

\def\lines{{\cal L}}
\def\planes{{\cal P}}
\def\planesr{\planes_{01}}
\def\peki{{\mathscr G}}
\def\pek(#1,#2){\penc({#1},{#2})}
\def\sub{{\mathscr P}}

\def\starof{\mathrm{S}}
\def\hipy{{\mathscr H}}
\def\reghipy{{\mathscr R}}
\def\regafhipy{{\mathscr A}}
\def\regbhipy{\mathscr A^\circ}

\def\linesr{\regafhipy_2}
\def\linesrb{\lines_{\rm r}}

\def\starofr{\starof_{\rm r}}
\def\starofraf{\starof_{\rm r}^{\rm af}}

\def\AffineSpSymb{\mathbf{A}}
\def\AfSpace(#1){\ensuremath{\AffineSpSymb(#1)}}

\def\KwadrSpSymb{\mathbf{Q}}
\def\KwadrSpace(#1,#2){\ensuremath{\KwadrSpSymb_{#1}(#2)}}
\def\AfPolSpSymb{\mathbf{U}}
\def\AfpolSpace(#1,#2){\ensuremath{\AfPolSpSymb_{#1}(#2)}}
\def\AfpolSpacex(#1,#2){\ensuremath{\AfPolSpSymb^\dagger_{#1}(#2)}}

\def\PencSpace(#1,#2){{\bf P}_{#2}({#1})}
\def\GrasSpace(#1,#2){{\bf G}_{#2}({#1})}
\def\AfPencSpace(#1,#2){{\bf A}_{#2}({#1})}

\def\fixaf{\ensuremath{\goth A}}
\def\fixafr{\ensuremath{\goth B}}
\def\fixafrb{\ensuremath{\goth C}}
\def\fixproj{\ensuremath{\goth P}}

\def\fixV{\ensuremath{\field V}}

\def\fixgras{\ensuremath{\goth G}}

\def\biegun{\mbox{\sf\bfseries b}}
\def\dhipa{\mbox{\sf\bfseries H}}
\def\shipa{\mbox{\footnotesize\sf\bfseries H}}
\def\sshipa{\mbox{\tiny\sf\bfseries H}}
\def\hipa{\mathchoice{\dhipa}{\dhipa}{\shipa}{\sshipa}}
\def\rdim{\mathrm{rdim}}

\def\PencilSp(#1,#2){{\bf P}_{#1}({#2})}

\def\TRGafrb{\mathop{\textstyle\TRG_{\fixafrb_2}}}
\def\TRGafr{\mathop{\textstyle\TRG_{\fixafr_2}}}

\def\div{\mathrel{\mid}}
\def\ndiv{\mathrel{\nmid}}


\newenvironment{cmath}{%
  \par
  \smallskip
  \centering
  $
}{%
  $
  \par
  \smallskip
  \csname @endpetrue\endcsname
}

\newenvironment{ctext}{%
  \par
  \smallskip
  \centering
}{%
 \par
 \smallskip
 \csname @endpetrue\endcsname
}

\begin{document}

\maketitle

\begin{abstract}
  The regular point-line geometry with respect to a pseudo-polarity
  is introduced. It is weaker than the underlying metric-projective geometry.
  The automorphism group of this geometry is determined.
  This geometry can be also expressed as the geometry of regular 
  lines and planes.
\end{abstract}

\begin{flushleft}\small
  Mathematics Subject Classification (2010): 51A50, 51F20, 51A45.\\
  Keywords: pseudo-polarity, regular subspaces, Grassmannian.
\end{flushleft}

\section*{Introduction}

Projective geometry and affine geometry over fields with even characteristic,
though have some ``strange" properties do not differ so strongly from geometries
over arbitrary field. In particular, most of standard methods used to characterize
geometry of the Grassmann spaces associated with them can be applied here.

The situation changes when we pass to the orthogonal geometry.
Let \fixproj\ be a projective space coordinatized by a field $\goth F$
and let $\perp$ be a projective conjugacy (projective polarity) defined on \fixproj;
then \fixproj\ equipped with $\perp$ is referred to as a {\em metric-projective space}.

Some standard derivatives are associated with the space $(\fixproj,\perp)$.
The first is a polar space,
which consists of selfconjugate (absolute) points and isotropic (singular or absolute) lines
(cf. \cite{tits}).
One of fundamental results which states that the underlying metric-projective space can
be reconstructed in terms of the associated polar space, is valid in all cases except
exactly one: when $\mathrm{char}({\goth F}) = 2$ and $\perp$ is a \emph{pseudo-polarity}.
In \cite[Subsection 2.4.18]{maldeghem} a polarity $\perp$ is said to be a pseudo-polarity 
if the set $\hipa$ of selfconjugate points with respect to $\perp$ is a proper (possibly empty)
subspace of $\fixproj$. In this paper we additionally assume that $\hipa$ is a hyperplane
(this means that ${\goth F}$ is perfect, see also \cite[Subsection 2.1.5]{hirs}). Pseudo-polarities are 
related to pseudo-quadrics and pseudo-quadratic forms 
(cf. \cite[Section 10.2]{buekenhout}, \cite[Chapter 6]{ueberberg}).
So, in the aforementioned point the geometry of a pseudo-polarity is
exceptional, though the geometry of its polar space is, generally, known. 

In a metric projective geometry an important role is played by the family of 
regular subspaces i.e. subspaces with the trivial radical.

If the conjugacy $\perp$ is not symplectic then the structure of regular 
points and regular lines is equivalent to the underlying metric geometry (cf. \cite{perpadeh}).
If $\perp$ is symplectic then one cannot have a space with regular points and lines. 
If $\perp$ is a pseudo-polarity then, admittedly $\mathrm{char}({\goth F}) = 2$, but
$(\fixproj,\perp)$ does contain regular points and lines. They yield
a new geometry a subject of our paper.
This geometry is stronger than
the affine geometry $\fixaf$ obtained by deleting nonregular points i.e. a projective
hyperplane $\hipa$ from $\fixproj$, but {\em weaker} than the
underlying metric-projective geometry in that
one can reconstruct the space \fixproj\ in terms of regular points and lines,
(cf. \ref{thm:reg2af}), but it is impossible to reconstruct the polarity $\perp$ 
in these terms (cf. \ref{thm:not:reg2perp:1}).
In this point the geometry of a pseudo-polarity is essentially exceptional.
The geometry of regular points and lines satisfies the dual of $\Gamma$-axiom 
(cf. \cite{cohen:handbook}), a relative of the $\Delta$-axiom 
(cf. \cite{copol:hall}, \cite{embcopol}). However, our geometry is neither one of copolar spaces, nor
a regular point-line geometry of a polarity (cf. \cite{grasregul}). Therefore,
results of these theories cannot be directly copied into our work.

In our paper we also discuss more  properties of the {\em regular geometry} of a 
pseudo-polarity, in particular, we determine the automorphism group of this geometry
(cf. \ref{prop:aut:nob3H}, \ref{prop:aut:b3H}).

The regular geometry of a pseudo-polarity, considered on the varieties of the
regular subspaces, presents some more oddities: the incidence system
of the regular subspaces contains isolated objects; there are also flags of regular
objects whose end-object may be contained in (or contains) no regular
successor (or predecessor respectively).
In geometry of incidence structures isolated objects generally are trash:
nothing can be defined in terms of them and they are unredefinable in general, as
each automorphism of the structure can arbitrarily permute them.
So, it is a good reason to remove them from considerations.
Since there are many combinations of isolated objects we get various 
structures of Grassmannians over regular subspaces.
Without entering into details of the general theory of $k$-Grassmannians
we show how our apparatus works in case of structures over regular lines.
It turns out (cf. \ref{thm:regb2af}, \ref{thm:sechip}, \ref{thm:regl2regp}) 
that all they are definitionally equivalent to the regular
point-line geometry. This, in particular, let us determine the automorphisms.
We are convinced that analogous results can be achieved for arbitrary $k$.

\section{Basic notions}

Let $\perp$ be a projective polarity in a finite dimensional metric projective space $(\fixproj,\perp)$
coordinatized by a vector space $\field V$ over a perfect field $\goth F$
with even characteristic.
In a suitable coordinate system the form $\xi$ which determines $\perp$
has its matrix of one of the following forms (cf. \cite[Subsection 2.1.5]{hirs}).
Let us set 
\begin{cmath}
  O = \begin{bmatrix} 0 & 0 \\ 0 & 0 \end{bmatrix}, \qquad
  \nabla = \begin{bmatrix} 0 & 1 \\ 1 & 0 \end{bmatrix}, \qquad\text{and}\qquad
  \nabla' = \begin{bmatrix} 1 & 1 \\ 1 & 0 \end{bmatrix}.
\end{cmath}
It is seen that $\det(\nabla),\det(\nabla')\neq 0$.
Then the matrix of $\xi$ is one of the following
\begin{center}
  \begin{tabular}{ccc}
    $\begin{bmatrix}
      1      & 0      & \cdots & \cdots & 0 \\       
      0      & \nabla & O      & \cdots & O \\       
      \vdots & O      & \nabla & \ddots & \vdots \\  
      \vdots & \vdots & \ddots & \ddots & O \\       
      0      & O      & \cdots & O      & \nabla     
    \end{bmatrix}$
    &
    $\begin{bmatrix} 
      \nabla' & O      & \cdots & O \\
      O       & \nabla & \ddots & \vdots \\
      \vdots  & \ddots & \ddots & O \\
      O       & \cdots & O      & \nabla
    \end{bmatrix}$
    &
    $\begin{bmatrix}
      \nabla & O      & \cdots & O \\
      O      & \nabla & \ddots & \vdots \\
      \vdots & \ddots & \ddots & O \\
      O      & \cdots & O      & \nabla
    \end{bmatrix}$
    \\
    {\refstepcounter{equation}\label{forma:typ1}
    Type \eqref{forma:typ1}}
    &
    {\refstepcounter{equation}\label{forma:typ2}
    Type \eqref{forma:typ2}}
    &
    {\refstepcounter{equation}\label{forma:typ3}
    Type \eqref{forma:typ3}}
  \end{tabular}
\end{center}

For a subspace $U$ we write $U^\perp = \set{p\colon p\perp U}$,
$\Rad(U) = U \cap U^\perp$, and $\rdim(U) = \dim(\Rad(U))$. 
A subspace $U$ is called {\em regular}
when $\Rad(U) = \emptyset$. The dimension $\dim(U)$ of a subspace $U$
coincides with the dimension of the vector subspace $W$ of $\field V$
such that $U = \set{\gen{u}\colon u \in W,\;u\neq\theta}$.
In particular, a point has dimension $1$ and a line has dimension $2$.
Frequently, a point $p$ will be identified with the one-element subspace $\set{p}$
which consists of $p$; consequently, we frequently write $U\cap U' = p$
instead of $U\cap U' = \set{p}$.
We write $\reghipy_k$ for the class of regular $k$-dimensional subspaces of 
$(\fixproj, \perp)$.
In the sequel we are interested in the incidence structure (cf. \cite{cohen:handbook})%
\begin{cmath}
  \reghipy := \left(\reghipy_k\colon 0\leq k \leq\dim(\fixproj)+1\right),
\end{cmath}
which is more closely investigated for indices $1, 2$ and $2, 3$. Various
geometries arise when we delete isolated objects in such structures.

\subsection{Grassmannians}

Grassmann spaces frequently appear in the literature, just to mention 
\cite{cohen:handbook}, \cite{polargras}.
The most general definition could be probably as follows. 
Let $X$ be a nonempty set and let $\sub$ be a family of subsets of $X$.
Assume that there is a dimension function $\dim:\sub\to\{0,\dots, n\}$ such that
$I=\struct{\sub, \subset, \dim}$ is an incidence geometry.
Write $\sub_k$ for the set of all $U\in\sub$ with $\dim(U) = k$. 
For $H\in\sub_{k-1}$ and $B\in\sub_{k+1}$ with $H\subset B$ the set
\begin{equation}\label{eq:pencil}
  \penc(H, B) := \set{U\in\sub_k\colon H\subset U\subset B}
\end{equation}
is called a \emph{pencil}; $\peki_k$ stands for the family of all such pencils.
If $0<k<n$, then a \emph{$k$-th Grassmann space over $I$} is a point-line geometry 
\begin{cmath}
  \PencSpace(I,k) := \struct{\sub_k, \peki_k}. 
\end{cmath} 
This is the most common understanding
of a Grassmann space. We used to call it a \emph{space of pencils} for its specific
lineset and to distinguish it from a closely related point-line geometry consiting
of $\sub_k$ as points and $\sub_{k+1}$ as lines, namely
\begin{cmath}
  \GrasSpace(I,k) := \struct{\sub_k, \sub_{k+1}, \subset}
\end{cmath}
which \emph{we} call a \emph{$k$-th Grassmannian over $I$} (cf. \cite[Section 1.1.2]{mark}).

In our settings, we write $\hipy_k$ for the set of $k$-dimensional subspaces of \fixproj.
Then a \emph{projective pencil} is a set \eqref{eq:pencil} with $\sub = \hipy$ and
a \emph{regular pencil} is a set \eqref{eq:pencil} with $\sub = \reghipy$.

\subsection{Methodological issues}

Most of the results of our paper consists in various
definablilities/undefinabilities of particular structures, derived from the
incidence geometry $\reghipy$, in terms of other structures of the same type.
Clearly, in a structure like this various notions can be introduced and some
notions can originate in the underlying projective and metric projective
geometry. But proving our results we must be strict: in each particular case we
must be sure that the respective definition can be expressed entirely in terms
of the language with names only for primitive notions of the structure in which
the second one is defined. The safest way to ensure that is to write down
respective definitions as ``formal formulas" in a formal language, and we do
follow this convention.

Our structures are, primarily, point-line geometries and the only primitive
notion used to characterize their geometry is the \emph{incidence} relation. 
We use the symbol $\inc$ to name the incidence relation between points and lines.
Note that $\mathord{\inc}=\mathord{\in}$ 
in $ \PencSpace(I,k)$ while $\mathord{\inc}=\mathord{\subset}$ in $\GrasSpace(I,k)$.

By a \emph{triangle} in a point-line geometry we mean three points, called \emph{vertices}, 
and three lines, called \emph{sides}, where every vertex is incident to exactly two sides
(or dually, every side is incident to exactly two vertices).

\subsection{$\perp$ is symplectic}

The form is of type \eqref{forma:typ3}.
Consequently, $\dim(\fixV) = 2m$ for some integer $m$.
In this case each point is selfconjugate; the point set of the respective 
polar space is the point set of \fixproj.

\subsection{$\perp$ is a pseudo-polarity}\label{sec:pseudo-polarity}

The form is of type \eqref{forma:typ1} or \eqref{forma:typ2}.
Then the set of selfconjugate points under $\perp$ is 
a subspace $\hipa$ of $\fixproj$. We assume that $\hipa$ is a hyperplane; then
the pole  $\biegun$ of $\hipa$ is a point.
The set $\reghipy_1$ of regular points is the hyperplane complement: the complement of
$\hipa$.
One can imagine the geometry of regular points and regular lines as a fragment
of an affine geometry.

The restriction $\perp_{\hipa}$ of 
the conjugacy $\perp$ to $\hipa$ determines on $\hipa$ 
a (possibly degenerate) symplectic polarity. 
If $\Rad(\hipa) = \emptyset$,  this polarity is nondegenerate.
In general, $\dim(\Rad(\hipa))\leq 1$.

Again, two types of geometry may occur.

\subsubsection{$\biegun$ lies on $\hipa$}\label{subsubsec:b0H}

In this case $\Rad(\hipa) = \biegun$, so the polarity $\perp$ restricted to
$\hipa$ is a degenerate symplectic polarity.
Then $n = \dim(\fixV) = 2k$ for some integer $k$
and
the form $\xi$ has form \eqref{forma:typ2}; it may be written as 
\begin{cmath}
  \xi([x_0,x_1,\dots,x_{2k-1}],[y_0,y_1,\dots,y_{2k-1}])  =
  x_0 y_0 + \sum_{i=0}^{k-1} (x_{2i} y _{2i+1} + x_{2i+1} y _{2i})
\end{cmath}
(cf. \cite{hirs}).
The hyperplane $\hipa$ and its pole $\biegun$ are characterized by the equations
\begin{cmath}
  \hipa\colon x_0 = 0, \qquad \biegun = [0,1,0,\dots,0].
\end{cmath}

\subsubsection{$\biegun$ does not lie on $\hipa$}

Then $n = \dim(\fixV) = 2k + 1$ 
and
the form $\xi$ has form \eqref{forma:typ1}; it may be written as 
\begin{cmath}
  \xi([x_0,x_1,\dots,x_{2k}],[y_0,y_1,\dots,y_{2k}])  =
  x_0 y_0 + \sum_{i=1}^{k} (x_{2i-1} y _{2i} + x_{2i} y _{2i-1})
\end{cmath}
(cf. \cite{hirs}).
The hyperplane $\hipa$ and its pole $\biegun$ are characterized by the equations
\begin{cmath}
  \hipa\colon x_0 = 0, \qquad \biegun = [1,0,\dots,0].
\end{cmath}

Note that for $x \in \hipa$ we have $x^\perp = \biegun + (x^\perp\cap\hipa)$.
Therefore, to determine $x^\perp$ it suffices to know the restriction $\perp_{\hipa}$ of 
the conjugacy $\perp$ to $\hipa$. On the other hand, this restriction determines on
$\hipa$ a symplectic polarity and this polarity is nondegenerate as 
$\Rad(\hipa) = \emptyset$.

\section{Results, general}

Let $\perp$, $\hipa$, and $\biegun$ be like in Section~\ref{sec:pseudo-polarity}.
Two families of specific regular subspaces arise:
\begin{cmath}
  \regafhipy_k = \{ A\in\reghipy_k\colon A\not\subset\hipa \}, \qquad\qquad
  \regbhipy_k = \{ A\in\reghipy_k\colon \biegun\notin A \}.
\end{cmath}
If $A$ is a subspace of $\fixproj$ we write $A^\infty := A\cap\hipa$
for the set of selfconjugate points of $A$. Note that $\Rad(A)$ is
always a (possibly empty) set of selfconjugate points, i.e. 
$\Rad(A)\subset A^\infty\subset\hipa$.

We begin with a simple but very significant fact.
\begin{fact}[{\cite[Cor. 1.3]{grasregul}}]\label{fct:reg:hip}
  If a subspace $A$ contains a regular hyperplane or $A$ is a hyperplane
  of a regular subspace, then $\rdim(A)\leq 1$.
\end{fact}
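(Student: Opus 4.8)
The plan is to split on the two alternatives in the hypothesis and, in each, reduce the assertion $\rdim(A)\le 1$ to a single codimension-one dimension count. Throughout I work with the vector subspaces $W_U\subseteq\field V$ underlying the projective subspaces $U$, so that $\rdim(U)=\dim W_{\Rad(U)}$ (with the empty subspace having dimension $0$), and I use the standard facts that $\Rad(U)=U\cap U^\perp$ is again a subspace and that $U\subseteq U'$ implies $U'^\perp\subseteq U^\perp$.

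First I would treat the case $H\subset A$ with $H$ a regular hyperplane of $A$. Since $H\subseteq A$ we get $A^\perp\subseteq H^\perp$, hence
\begin{cmath}
  \Rad(A)\cap H=A\cap A^\perp\cap H\subseteq H\cap H^\perp=\Rad(H)=\emptyset.
\end{cmath}
Thus the subspace $\Rad(A)$ sits inside $A$ and meets the hyperplane $H$ of $A$ trivially, so $\dim W_{\Rad(A)}+\dim W_H=\dim\bigl(W_{\Rad(A)}+W_H\bigr)\le\dim W_A=\dim W_H+1$, which forces $\rdim(A)\le 1$. Note that here no assumption on the ambient form is used.

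For the case in which $A$ is a hyperplane of a regular subspace $B$, the point is that regularity of $B$, i.e. $B\cap B^\perp=\emptyset$, says exactly that the restriction of the coordinatizing form $\xi$ to $W_B$ is nondegenerate. In that nondegenerate space the orthogonal complement of the hyperplane $W_A$ is a $1$-dimensional subspace, call it $\ell$; and since $W_A\subseteq W_B$ one has $W_{\Rad(A)}=W_A\cap W_A^\perp=W_A\cap\bigl(W_A^\perp\cap W_B\bigr)\subseteq W_A^\perp\cap W_B=\ell$, whence $\rdim(A)=\dim W_{\Rad(A)}\le 1$. Equivalently one can stay in the ambient space: nondegeneracy of $\xi$ on $\field V$ makes $B^\perp$ a hyperplane of $A^\perp$, it is disjoint from $\Rad(A)\subseteq A^\perp$ since a common point would lie in $B\cap B^\perp=\emptyset$, and the same inequality applies.

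I do not expect a genuine obstacle; the only steps that deserve a line of justification are the identifications underlying the second case --- $\Rad(B)=\emptyset$ with nondegeneracy of $\xi|_{W_B}$, and the ambient polar $A^\perp$ intersected with $B$ with the orthogonal complement of $W_A$ computed inside $(W_B,\xi|_{W_B})$ --- after which both cases collapse to the elementary bound that a subspace meeting a hyperplane trivially has vector dimension at most $1$.
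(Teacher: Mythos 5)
Your proof is correct. The paper itself gives no argument for this statement --- it is quoted as Fact~\ref{fct:reg:hip} from \cite[Cor.~1.3]{grasregul} without proof --- so there is nothing internal to compare against; your two-case reduction (in each case exhibiting $\Rad(A)$ as a subspace meeting a hyperplane of its ambient space trivially, hence of vector dimension at most $1$) is the standard and expected argument, and all the identifications you flag (regularity of $B$ as nondegeneracy of $\xi|_{W_B}$, and $W_A^\perp\cap W_B$ as the complement of $W_A$ inside $(W_B,\xi|_{W_B})$) check out.
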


\begin{fact}\label{fct:dimkrosperp}
  If\/ $A$ is a subspace not contained in $\hipa$, then 
  $\dim(A\cap(A^\infty)^\perp)\geq 1$.
\end{fact}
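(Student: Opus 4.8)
The plan is to move the claim into linear algebra, where it reduces to a single dimension count. Write $\field V$ for the underlying vector space, $n=\dim(\field V)$, and let $\xi$ be the bilinear form determining $\perp$. Since a projective polarity is a nondegenerate correlation, $\xi$ is a nondegenerate (symmetric) form on $\field V$ --- equivalently, the matrices of Type~\eqref{forma:typ1} and Type~\eqref{forma:typ2} displayed above are invertible. Let $W\leq\field V$ be the vector subspace with $A=\set{\gen{u}\colon u\in W,\;u\neq\theta}$, and let $H\leq\field V$ be the hyperplane corresponding to the hyperplane $\hipa$ of selfconjugate points. Then $A^\infty=A\cap\hipa$ corresponds to $W\cap H$, and the operations $\perp$, $\cap$ and $\dim$ on subspaces of $\fixproj$ agree with their counterparts for subspaces of $\field V$.

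The argument then rests on two dimension identities. First, the hypothesis $A\not\subset\hipa$ means $W\not\subset H$, so $W\cap H$ is a hyperplane of $W$ and $\dim(W\cap H)=\dim(W)-1$; in particular this also covers the extreme case in which $A$ is a regular point, where $A^\infty=\emptyset$ and $W\cap H=\set{\theta}$. Second, nondegeneracy of $\xi$ gives $\dim\bigl((W\cap H)^\perp\bigr)=n-\dim(W\cap H)=n-\dim(W)+1$, the orthogonal complement being taken in $\field V$ (and being all of $\field V$ in the extreme case, in line with the convention that $\emptyset^\perp$ is the whole space).

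Combining these with the Grassmann inequality for the intersection of two subspaces of $\field V$ yields
\begin{cmath}
  \dim\bigl(A\cap(A^\infty)^\perp\bigr)=\dim\bigl(W\cap(W\cap H)^\perp\bigr)
  \geq\dim(W)+\dim\bigl((W\cap H)^\perp\bigr)-n=1 ,
\end{cmath}
which is precisely the assertion. There is no genuine obstacle here: the whole proof is a dimension count. The only things calling for a little care are the bookkeeping in the degenerate case $A^\infty=\emptyset$ and the (entirely routine) application of the identity $\dim(U^\perp)=n-\dim(U)$ to $\xi$ regarded as a nondegenerate form on all of $\field V$ --- which is exactly what being a polarity provides.
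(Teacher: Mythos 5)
Your proof is correct and is essentially the paper's own argument: the paper likewise uses $\dim(A^\infty)=\dim(A)-1$, nondegeneracy to get $\dim\bigl((A^\infty)^\perp\bigr)=n-\dim(A)+1$, and the modular dimension formula in the form $\dim\bigl(A+(A^\infty)^\perp\bigr)\leq n$, which is the same computation as your Grassmann inequality. The only difference is presentational — you work at the level of vector subspaces and spell out the degenerate case $A^\infty=\emptyset$, which the paper's dimension conventions absorb silently.
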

\begin{proof}
  Set $k := \dim(A)$, $m := \dim(A\cap(A^\infty)^\perp)$.
  Then $\dim(A^\infty) = k-1$ and 
  $n \geq \dim(A + (A^\infty)^\perp) = k + (n - (k -1)) - m = n+1 - m$,
  which yields our claim.
\end{proof}
The subspace $A\cap(A^\infty)^\perp$ will be denoted by $\Hrd(A)$
and will play an essential role (slightly similar to the role of the 
radical $\Rad(A)$ of $A$). Fact \ref{fct:dimkrosperp} says that $\Hrd(A)$ is at least 
a point for every subspace $A$ not contained in $\hipa$.

\begin{prop}\label{prop:reg:sub}
  Let\/ $A$ be a subspace of\/ $\fixproj$ not contained in $\hipa$.
  The following conditions are equivalent:
  \begin{enumerate}[\rm(i)]
  \item\label{regsub:war1}
    the subspace $A$ is regular; 
  \item\label{regsub:war2}
    the subspace $\Hrd(A)$ is a point (cf. {\upshape \ref{fct:dimkrosperp}});
  \item\label{regsub:war3}
    $\rdim(A^\infty)\leq 1$ and 
    \begin{enumerate}[\rm(a)]
    \item
      either the subspace $A^\infty$ is regular in the (possibly degenerated) 
      symplectic projective geometry induced on $\hipa$,
    \item
      or $\Rad(A^\infty)$ is a point $p$ and $A\not\subset p^\perp$.
    \end{enumerate}
  \end{enumerate}
\end{prop}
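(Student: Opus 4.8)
The plan is to prove the cycle $(\ref{regsub:war1})\Rightarrow(\ref{regsub:war2})\Rightarrow(\ref{regsub:war3})\Rightarrow(\ref{regsub:war1})$, working throughout with the decomposition $A = A^\infty + \gen{a}$ for some $a\in A\setminus\hipa$ and the key identity $x^\perp = \biegun + (x^\perp\cap\hipa)$ valid for $x\in\hipa$ (noted at the end of Section~\ref{sec:pseudo-polarity}). First I would record two dimension facts: by Fact~\ref{fct:dimkrosperp}, $\dim(\Hrd(A))\geq 1$ always, and $\Rad(A)\subset A^\infty$ always, so $\Rad(A)=\emptyset$ is equivalent to $\Rad(A)$ containing no point, i.e. to no point of $A^\infty$ being conjugate to all of $A$.

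For $(\ref{regsub:war1})\Rightarrow(\ref{regsub:war2})$: if $\Hrd(A)$ were at least a line, pick a point $q\in\Hrd(A)$; then $q\in A$ and $q\perp A^\infty$, and I would argue that one can choose $q$ (using that $\Hrd(A)$ has dimension $\geq 2$) so that additionally $q\perp a$, forcing $q\in\Rad(A)$ and contradicting regularity — the choice being possible because the condition $q\perp a$ cuts $\Hrd(A)$ down by at most one dimension. Conversely, $(\ref{regsub:war2})\Rightarrow(\ref{regsub:war1})$: if $\Rad(A)\neq\emptyset$, any $p\in\Rad(A)$ lies in $A^\infty$ and satisfies $p\perp A$, hence $p\perp A^\infty$, so $p\in A\cap(A^\infty)^\perp=\Hrd(A)$; thus $\Rad(A)\subset\Hrd(A)$, and I claim $\Hrd(A)=\Rad(A)+(\text{at most one more point})$, so if $\Hrd(A)$ is a single point then either $\Rad(A)=\emptyset$ (done) or $\Rad(A)=\Hrd(A)$ is that point — and in the latter case I need to rule out that $\Hrd(A)$ is both a point and equals $\Rad(A)$, which I would do by exhibiting, using $x^\perp=\biegun+(x^\perp\cap\hipa)$ and nondegeneracy/near-nondegeneracy of $\perp_\hipa$, a point of $A^\infty$ conjugate to $A^\infty$ but not to $a$, showing $\Hrd(A)$ strictly larger unless $\Rad(A)=\emptyset$. (Cleaner: $\Hrd(A)$ is a point $\iff A\cap(A^\infty)^\perp$ is $1$-dimensional $\iff$ $\Rad(A)$ is empty, because $\Rad(A)=\{p\in A^\infty: p\perp a\}\cap(A^\infty)^\perp$ and the $a$-conjugacy condition is non-vacuous on the line $\Hrd(A)$ exactly when $\biegun$-considerations force it; I would make this precise in the write-up.)

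For the equivalence with $(\ref{regsub:war3})$, the idea is to analyze $\Rad(A)$ entirely inside $\hipa$. A point $p\in A^\infty$ lies in $\Rad(A)$ iff $p\perp A^\infty$ and $p\perp a$, i.e. iff $p\in\Rad(A^\infty)$ and $p\perp a$. By Fact~\ref{fct:reg:hip} applied to $A^\infty$ (which is a hyperplane of the regular-or-not subspace $A$, but here I instead use $\rdim\le 1$ hypotheses directly), $\dim(\Rad(A^\infty))\leq 1$ is forced in the relevant cases; this is why the clause $\rdim(A^\infty)\leq 1$ appears. If $A^\infty$ is regular in $\perp_\hipa$ — case (a) — then $\Rad(A^\infty)=\emptyset$, so trivially $\Rad(A)=\emptyset$ and $A$ is regular. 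If $\Rad(A^\infty)$ is a point $p$ — case (b) — then $\Rad(A)=\{p\}$ if $p\perp a$ and $\Rad(A)=\emptyset$ otherwise; since $p\in\hipa$, $p\perp a$ is equivalent (via $p^\perp=\biegun+(p^\perp\cap\hipa)$, and since $a\notin\hipa$, whether $a\in p^\perp$ is a genuine condition) to $A\subset p^\perp$ — i.e. $A$ is regular exactly when $A\not\subset p^\perp$. That $\rdim(A^\infty)\ge 2$ cannot occur for regular $A$: if $\dim\Rad(A^\infty)\ge 2$ then the line $\Rad(A^\infty)\subset A^\infty$ meets $a^\perp$ in at least a point, giving a nonempty $\Rad(A)$; this handles the remaining direction of $(\ref{regsub:war1})\Rightarrow(\ref{regsub:war3})$.

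The main obstacle I anticipate is the bookkeeping around the pole $\biegun$: one must keep careful track of whether $\biegun\in\hipa$ or not (Subsections~\ref{subsubsec:b0H} and the following), since the identity $x^\perp=\biegun+(x^\perp\cap\hipa)$ and the precise relationship between "$p\perp a$ for $p\in\hipa$, $a\notin\hipa$" and "$A\subset p^\perp$" hinge on the position of $\biegun$ relative to $A$ and to $\hipa$. In particular, translating "$p\perp a$" into the coordinate-free condition "$A\not\subset p^\perp$" requires knowing that $p^\perp$ is a hyperplane not containing $A^\infty$-complement directions trivially — one must verify $p^\perp\supset A^\infty$ already (true since $p\in\Rad(A^\infty)$) so that $A\subset p^\perp\iff a\in p^\perp\iff p\perp a$. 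Everything else is linear-algebra routine, so I would concentrate the written proof on these structural equivalences and relegate the dimension counts (of the type in Fact~\ref{fct:dimkrosperp}'s proof) to one-line remarks.
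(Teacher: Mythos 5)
Your treatment of the equivalence (\ref{regsub:war1})$\Leftrightarrow$(\ref{regsub:war3}) is correct and in fact more direct than the paper's: writing $A=A^\infty+\gen{a}$ and observing that $\Rad(A)=\Rad(A^\infty)\cap a^\perp$ reduces everything to a case split on $\dim(\Rad(A^\infty))$, whereas the paper reaches this equivalence only by passing through condition (\ref{regsub:war2}). Your implication (\ref{regsub:war1})$\Rightarrow$(\ref{regsub:war2}) is also sound: if $\Hrd(A)$ contained a line, that line would meet the hyperplane $a^\perp$, and the intersection point lies in $\Rad(A)$.

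The genuine gap is in (\ref{regsub:war2})$\Rightarrow$(\ref{regsub:war1}). You correctly reduce to excluding the case $\Hrd(A)=\Rad(A)=\set{p}$, and you propose to do this by exhibiting a point of $A^\infty$ conjugate to $A^\infty$ but not to $a$. No such point can be exhibited: in that very case $\Rad(A^\infty)=A^\infty\cap(A^\infty)^\perp\subseteq A\cap(A^\infty)^\perp=\Hrd(A)=\set{p}$, so $p$ is the \emph{only} point of $A^\infty$ conjugate to all of $A^\infty$, and it does satisfy $p\perp a$; your candidate point provably does not exist under the hypothesis you are trying to refute. The contradiction has to come from the nondegeneracy of the ambient form $\xi$ on $\fixV$ --- not from properties of $\perp_{\hipa}$ or the position of $\biegun$, which is what your sketch invokes. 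Concretely: if $p=\Hrd(A)$ and $p\in\Rad(A)$, then $p\perp A$ and, since $p\in A^\infty$, also $p\perp(A^\infty)^\perp$; the dimension count from the proof of \ref{fct:dimkrosperp} gives $\dim(A+(A^\infty)^\perp)=n+1-\dim(\Hrd(A))=n$, so $p$ would be conjugate to the whole space, which is impossible. This is the paper's argument, and some version of this global count is unavoidable; the ``cleaner'' parenthetical in your sketch does not supply a substitute for it.
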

\begin{proof}
  From \ref{fct:dimkrosperp}, $\Hrd(A)$ is at least a point.
  
  \eqref{regsub:war1}$\implies$\eqref{regsub:war3}:
  Since $A^\infty$ is a hyperplane in $A$, by \ref{fct:reg:hip},
  $\rdim(A^\infty)\leq 1$.
  If $\rdim(A^\infty) = 0$, then $A^\infty$ is regular.
  If $\rdim(A^\infty) = 1$, then $\Rad(A^\infty)$ is a point, say $p$.
  The point $p$ could be the only point in $\Rad(A)$.
  As $A$ is regular, we have $p\notin\Rad(A)$ i.e. $p\not\perp A$ or,
  in other words, $A\not\subset p^\perp$.
  
  \eqref{regsub:war3}$\implies$\eqref{regsub:war2}:
  Let $A^\infty$ be regular. Suppose that $\Hrd(A)$ contains a line $K$.
  Then $K\subset A$. As $A^\infty$ is a hyperplane in $A$ so, $K$ and $A^\infty$
  share a point $q$. Note that $q\in A^\infty\cap(A^\infty)^\perp = \Rad(A^\infty)$
  which is impossible. Consequently, $\Hrd(A)$ is a point.
  
  Now, let $\Rad(A^\infty)$ be a point $p$ and $A\not\subset p^\perp$.
  Then $p\in\Hrd(A)$. 
  Suppose that there is $q\in\Hrd(A)$, $q\neq p$. 
  If $q\in A^\infty$, then $q\in\Rad(A^\infty)$, which is impossible. 
  So $q\in A\setminus A^\infty$. On the other hand $p\in A^\infty\cap(A^\infty)^\perp$ and 
  $q\in(A^\infty)^\perp$, hence $p\perp A^\infty + q = A$ which contradicts
  our assumption that $A\not\subset p^\perp$. So, $\Hrd(A)$ is a point.
  
  \eqref{regsub:war2}$\implies$\eqref{regsub:war1}:
  Let $\Hrd(A)$ be a point $p$ and suppose that $q\in\Rad(A)$. Then $q\in A$ and 
  $q\in A^\perp\subset(A^\infty)^\perp$, so $q = p$.
  Hence $p\perp A$ and $p\in A^\infty$, which gives $p \perp A + (A^\infty)^\perp$. 
  Note that $\dim(A+(A^\infty)^\perp) = n$, so $p\perp V$ a contradiction.
\end{proof}
\begin{note*}
  If\/ $p = \Rad(A^\infty)$, then  $p = \Hrd(A)$.
\end{note*}
\begin{note*}
  Let\/ $A$ be a regular subspace not contained in $\hipa$ and $k = \dim(A)$. 
  Then the restriction $\perp_A$ of $\perp$ to $A$ is a pseudo-polarity and 
  $\Hrd(A)$ is the pole of $A^\infty$ within $A$.
  Thus
  \begin{ctext}  
    if $2\div k$, then $\Hrd(A)\in A^\infty$,\qquad and\qquad
    if $2\ndiv k$, then $\Hrd(A)\notin A^\infty$.
  \end{ctext}
\end{note*}

Recall that the induced geometry on $\hipa$ is symplectic.
Therefore, $A^\infty$ may be regular iff 
$2\mathrel{\div}\dim(A^\infty)$ i.e. iff $2\ndiv\dim(A)$.
This gives

\begin{cor}\label{cor:reg:sub}
  Let\/ $A$ be a subspace not contained in $\hipa$ with $k = \dim(A)$.
  \\
  If\/ $2\div k$ then $A$ is regular iff\/ $\Rad(A^\infty)$ is a point $p$ and $A\setminus \hipa$ misses 
  $p^\perp$.
  \\
  If\/ $2\ndiv k$ then $A$ is regular iff\/ $A^\infty$ is regular.
\end{cor}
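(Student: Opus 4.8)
The plan is to read both assertions straight off Proposition~\ref{prop:reg:sub}, namely from the equivalence of~\eqref{regsub:war1} and~\eqref{regsub:war3}, by substituting the parity of $k$. The only structural input is that $\perp$ restricted to $\hipa$ is symplectic, hence the form $\xi$ restricted to $A^\infty$ is alternating; it induces a nondegenerate alternating form on $A^\infty/\Rad(A^\infty)$, and a nondegenerate alternating form exists only in even vector dimension. Since $A\not\subset\hipa$ and $\hipa$ is a hyperplane we have $\dim(A^\infty)=k-1$, so this yields the congruence $\rdim(A^\infty)\equiv k-1\pmod 2$, which is the whole arithmetical content of the proof.

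For $2\ndiv k$ the number $\dim(A^\infty)=k-1$ is even, hence $\rdim(A^\infty)$ is even. If $A^\infty$ is regular, then $\rdim(A^\infty)=0\le 1$ and the alternative (a) of~\eqref{regsub:war3} holds, so $A$ is regular. Conversely, if $A$ is regular, then~\eqref{regsub:war3} forces $\rdim(A^\infty)\le 1$, and the only even number $\le 1$ is $0$, so $A^\infty$ is regular. This settles the odd case.

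For $2\div k$ the number $\dim(A^\infty)=k-1$ is odd, so $\rdim(A^\infty)$ is odd; consequently the clause ``$\rdim(A^\infty)\le 1$'' in~\eqref{regsub:war3} amounts to ``$\rdim(A^\infty)=1$'', i.e. to $\Rad(A^\infty)$ being a single point $p$, and then the alternative (a) is excluded, so~\eqref{regsub:war3} reduces to: $\Rad(A^\infty)$ is a point $p$ and $A\not\subset p^\perp$. It remains to rephrase $A\not\subset p^\perp$ as ``$A\setminus\hipa$ misses $p^\perp$''. Since $p\in\Rad(A^\infty)\subset(A^\infty)^\perp$, we have $A^\infty\subset p^\perp$; moreover $A^\infty$ is a hyperplane of $A$, while $A\cap p^\perp$ is a hyperplane of $A$ exactly when $A\not\subset p^\perp$. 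Thus, when $A\not\subset p^\perp$, the inclusion $A^\infty\subset A\cap p^\perp$ of two hyperplanes of $A$ is an equality, whence $(A\setminus\hipa)\cap p^\perp=(A\cap p^\perp)\setminus\hipa=A^\infty\setminus\hipa=\emptyset$; the converse implication is trivial because $A\setminus\hipa$ is nonempty. Hence for even $k$ regularity of $A$ is equivalent to: $\Rad(A^\infty)$ is a point $p$ and $A\setminus\hipa$ misses $p^\perp$.

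I do not anticipate a real obstacle. The only delicate point is the last translation between $A\not\subset p^\perp$ and ``$A\setminus\hipa$ misses $p^\perp$'', which rests on the three facts that $A^\infty$ is a hyperplane of $A$, that $A^\infty\subset p^\perp$ (from $p\in\Rad(A^\infty)$), and that $A\cap p^\perp$ is a hyperplane of $A$ precisely when $A\not\subset p^\perp$; everything else is bookkeeping with the parity of $k-1$ inside Proposition~\ref{prop:reg:sub}\eqref{regsub:war3}.
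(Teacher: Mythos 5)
Your proof is correct and takes essentially the same route as the paper: the paper deduces the corollary from Proposition~\ref{prop:reg:sub} via the single observation that the geometry induced on $\hipa$ is symplectic, so $\rdim(A^\infty)\equiv\dim(A^\infty)=k-1\pmod 2$ and hence the clause $\rdim(A^\infty)\le 1$ selects alternative (a) or (b) according to the parity of $k$. You merely make explicit the parity bookkeeping and the translation of $A\not\subset p^\perp$ into ``$A\setminus\hipa$ misses $p^\perp$'', both of which the paper leaves implicit.
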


As particular instances of \ref{prop:reg:sub} we obtain a series of 
criterions of regularity.

\begin{lem}\label{lem:reg:line}
  A line $L$  not contained in $\hipa$ is regular iff 
  it is not contained in the hyperplane $(L^\infty)^\perp$.
\end{lem}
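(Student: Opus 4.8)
The plan is to give a direct argument; the lemma is essentially the $\dim = 2$ case of Proposition~\ref{prop:reg:sub}, but a self-contained proof is shorter. First I would note that, since $L\not\subset\hipa$ and $\hipa$ is a hyperplane, the selfconjugate part $L^\infty = L\cap\hipa$ is a single point, which I call $p$; as $\perp$ is a nondegenerate polarity, $p^\perp = (L^\infty)^\perp$ is a hyperplane, so the statement is well posed. I would also record the fact, used in both directions, that $p\in\hipa$ means $p\perp p$, i.e. $p\in p^\perp$.

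For the implication ``$L$ not regular $\implies$ $L\subset(L^\infty)^\perp$'', pick $q\in\Rad(L) = L\cap L^\perp$. From $L^\infty\subset L$ we get $L^\perp\subset(L^\infty)^\perp = p^\perp$, hence $q\in p^\perp$. If $q\neq p$, then $L$, being the line joining the two distinct points $p,q$ of the subspace $p^\perp$, lies in $p^\perp$; if $q = p$, then $p\in\Rad(L)$ gives $p\perp L$, i.e. $L\subset p^\perp$ directly. Either way $L\subset(L^\infty)^\perp$.

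For the converse, assume $L\subset(L^\infty)^\perp = p^\perp$. Then every point of $L$ is conjugate to $p$, so $p\in L^\perp$; combined with $p\in L$ this gives $p\in L\cap L^\perp = \Rad(L)$, so $L$ is not regular.

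The computation is essentially trivial; the one point that needs attention is the degenerate case $q = p$ in the first implication, which is why it is treated separately. As an alternative, one can simply invoke Corollary~\ref{cor:reg:sub} with $k = 2$: there $\Rad(L^\infty) = L^\infty = p$ (a single point of the symplectic geometry induced on $\hipa$ is selfconjugate), and the condition ``$L\setminus\hipa$ misses $p^\perp$'' reads $L\cap p^\perp = \{p\}$, which for a line amounts to $L\not\subset p^\perp = (L^\infty)^\perp$.
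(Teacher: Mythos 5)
Your proof is correct. The paper itself gives no written proof of this lemma: it is listed among the ``particular instances of \ref{prop:reg:sub}'', i.e.\ one is meant to specialize \ref{prop:reg:sub} (or \ref{cor:reg:sub}) to $\dim(L)=2$, where $L^\infty$ is a single selfconjugate point $p$, condition (iii)(a) is vacuous because a point of the symplectic geometry on $\hipa$ is never regular, and (iii)(b) reduces exactly to $L\not\subset p^\perp$. Your primary argument instead verifies the equivalence directly from the definition of $\Rad(L)$, which is shorter and avoids the machinery of $\Hrd$; the only delicate point, the case $q=p$ in the forward direction, is handled correctly, and the observation $p\in p^\perp$ (because $p\in\hipa$) is exactly what makes both directions close. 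Your closing remark deriving the statement from \ref{cor:reg:sub} recovers the paper's intended route, so the two approaches agree; the direct version buys self-containedness, while the paper's version buys uniformity with the other regularity criteria of that section.
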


\begin{cor}\label{cor:regularneWpeku}
  Let $p$ be a regular point.
  A line $L$ through $p$ is regular iff it misses $p^\perp\cap\hipa$.
\end{cor}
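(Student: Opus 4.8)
The plan is to derive this from Lemma~\ref{lem:reg:line} (the case $k=2$ of Corollary~\ref{cor:reg:sub}), so the work is purely a translation of the condition ``$L$ is not contained in $(L^\infty)^\perp$'' into ``$L$ misses $p^\perp\cap\hipa$'' under the extra hypothesis that $p\in L$ is a regular point. First I would note that since $p$ is regular, $p\notin\hipa$, hence $L\not\subset\hipa$ and Lemma~\ref{lem:reg:line} applies: $L$ is regular iff $L\not\subset(L^\infty)^\perp$. Since $\dim(L)=2$ while $L^\infty=L\cap\hipa$ is at most a point (it is exactly a point, as $L\not\subset\hipa$ forces $L^\infty$ to be a hyperplane of $L$, i.e.\ a single point), the condition $L\not\subset(L^\infty)^\perp$ means precisely that at least one of the (at least two) points of $L$ lies outside the hyperplane $(L^\infty)^\perp$.

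Next I would identify $(L^\infty)^\perp$ concretely. Write $L^\infty=\{q\}$. The point $p$ lies on $L$ and is regular, so $p\neq q$ and $L=\LineOn(p,q)$. Now $q\in L\subset L^{\perp\perp}$ is not the issue; rather the key computation is to relate $(L^\infty)^\perp=q^\perp$ to $p^\perp$. The geometric content is: among the points of $L$, exactly those conjugate to $q$ lie in $q^\perp$, and I want to replace ``conjugate to $q$'' by ``in $p^\perp$'' restricted to $\hipa$. Concretely, $L\subset q^\perp$ iff $p\perp q$ (since $q\perp q$ always, as $q\in\hipa$, so $L=\LineOn(p,q)\subset q^\perp$ iff $p\in q^\perp$ iff $p\perp q$ iff $q\in p^\perp$). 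Thus $L$ is regular iff $p\not\perp q$, i.e.\ iff $q\notin p^\perp$.

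It remains to phrase this as ``$L$ misses $p^\perp\cap\hipa$.'' Since $p$ is regular, $p\notin p^\perp$, so the only candidate point of $L$ that could lie in $p^\perp\cap\hipa$ is a point of $L\cap\hipa=L^\infty=\{q\}$; hence $L\cap p^\perp\cap\hipa$ is nonempty iff $q\in p^\perp$. Combining with the previous paragraph: $L$ regular iff $q\notin p^\perp$ iff $L\cap(p^\perp\cap\hipa)=\emptyset$, which is the assertion. I do not anticipate a serious obstacle here; the only point requiring a little care is the equivalence $L\subset q^\perp\iff p\perp q$, which uses that $q\in\hipa$ is self-conjugate so that membership of the whole line in $q^\perp$ is controlled by the single point $p$. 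One should also double-check the edge case where $\goth F=\GF(2)$ so that $L$ has only three points and $L\setminus\hipa$ has exactly two, but the argument above never counts points and goes through verbatim.
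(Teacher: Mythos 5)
Your proposal is correct and follows essentially the same route as the paper: the paper's proof also sets $q:=L\cap\hipa$, applies Lemma~\ref{lem:reg:line}, and converts $L\not\subset q^\perp$ into $p\notin q^\perp$ (equivalently $q\notin p^\perp$), which is exactly the condition that $L$ misses $p^\perp\cap\hipa$. You merely spell out the intermediate steps (that $q\perp q$ because $q\in\hipa$, and that $L\cap p^\perp\cap\hipa\subseteq\{q\}$) which the paper leaves implicit.
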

\begin{proof}
  Let $q := L \cap \hipa$. By \ref{lem:reg:line} $L$ is regular iff $p\notin q^\perp$
  i.e. iff $q\notin p^\perp$, as required.
\end{proof}

Note that every nonregular line on $\hipa$ is totally isotropic.

\begin{lem}\label{lem:regul3biegun}
  If\/ $\biegun\in\hipa$, then each affine line with direction $\biegun$
  is regular and no line on $\hipa$ through $\biegun$ is regular. 
  If\/ $\biegun\notin\hipa$, then no affine line through\/ $\biegun$ is regular.
\end{lem}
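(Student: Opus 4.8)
The lemma has three assertions, all of which I would derive from the regularity criterion for lines in \ref{lem:reg:line} (equivalently \ref{cor:regularneWpeku}) together with the explicit description of $\biegun^\perp$ and $\hipa$ in the two coordinate normal forms of Section~\ref{sec:pseudo-polarity}. The plan is to handle the case $\biegun\in\hipa$ first, then $\biegun\notin\hipa$, each time reducing to a statement about whether a line meets $\biegun^\perp\cap\hipa$.

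First I would treat the case $\biegun\in\hipa$ (Subsubsection~\ref{subsubsec:b0H}). Here an \emph{affine line with direction $\biegun$} means a line $L$ not contained in $\hipa$ whose point at infinity $L^\infty = L\cap\hipa$ equals $\biegun$. By \ref{lem:reg:line}, $L$ is regular iff $L\not\subset(L^\infty)^\perp=\biegun^\perp$. But $\biegun=\Rad(\hipa)$, so $\biegun\perp\hipa$, i.e. $\hipa\subset\biegun^\perp$; since $\hipa$ is a hyperplane and $\biegun^\perp$ is a hyperplane (as $\biegun$ is a regular \dots wait, $\biegun$ is the pole of $\hipa$, hence $\biegun^\perp=\hipa$), we get $\biegun^\perp=\hipa$. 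Then $L\not\subset\hipa$ is exactly the hypothesis that $L$ is an affine line, so every affine line with direction $\biegun$ is regular. For the second half, a line $L$ on $\hipa$ through $\biegun$ has $L\subset\hipa=\biegun^\perp$; since $\biegun\in L$ this forces $\biegun\in\Rad(L)$, so $L$ is not regular. (Alternatively: $L\subset\hipa$ is totally isotropic by the remark following \ref{cor:regularneWpeku}.)

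Next, the case $\biegun\notin\hipa$. Now $\biegun^\perp=\hipa$ still holds ($\biegun$ is the pole of the hyperplane $\hipa$), but $\biegun\notin\hipa$, so $\biegun$ is a regular point. An \emph{affine line through $\biegun$} is a line $L\ni\biegun$ with $L\not\subset\hipa$; its infinite point is $q:=L\cap\hipa$. By \ref{cor:regularneWpeku} applied at the regular point $p=\biegun$, the line $L$ is regular iff $L$ misses $\biegun^\perp\cap\hipa=\hipa\cap\hipa=\hipa$. But $q\in L\cap\hipa$ is nonempty, so $L$ meets $\hipa=\biegun^\perp\cap\hipa$; hence $L$ is not regular. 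Thus no affine line through $\biegun$ is regular.

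The only subtlety—and the step I would be most careful about—is making the identification $\biegun^\perp=\hipa$ explicit and invoking the right criterion in each case: in Subsubsection~\ref{subsubsec:b0H} one must note $\biegun=\Rad(\hipa)$ so that the hyperplane $(L^\infty)^\perp$ in \ref{lem:reg:line} literally \emph{is} $\hipa$; in the other case one must check that $\biegun$ is a regular point so that \ref{cor:regularneWpeku} applies, and that the set $\biegun^\perp\cap\hipa$ it refers to is all of $\hipa$. Once these identifications are in place, all three statements are immediate from \ref{lem:reg:line}/\ref{cor:regularneWpeku}, with no computation needed; one may optionally double-check against the explicit forms of $\xi$ and the coordinates of $\hipa$ and $\biegun$ listed in Section~\ref{sec:pseudo-polarity}.
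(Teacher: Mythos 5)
Your proof is correct and follows essentially the same route as the paper: both cases reduce to the identification $\biegun^\perp=\hipa$ combined with the regularity criterion of \ref{lem:reg:line} (equivalently, observing that the improper point of the line in question lies in its radical). Your write-up is somewhat more explicit than the paper's two-line argument, but there is no substantive difference.
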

\begin{proof}
  Let $\biegun\in\hipa$.
  In view of \ref{lem:reg:line} it suffices to note that $\biegun^\perp = \hipa$
  and affine line are those not contained in $\hipa$.

  Let $\biegun\notin\hipa$. Then for each $p\in\hipa$ we have $p\in\Rad({\LineOn(p,\biegun)})$
  so, the line $\LineOn(p,\biegun)$ is not regular.
\end{proof}

A direct consequence of \ref{prop:reg:sub} and \ref{cor:reg:sub} is
\begin{lem}\label{lem:reg:plane}
  Let $A$ be a plane not contained in $\hipa$.
  Clearly, $A^\infty$ is a line.
  The following conditions are equivalent:
  \begin{enumerate}[\rm(i)]
  \item\label{regplane:war1}
    the plane $A$ is regular; 
  \item\label{regplane:war2}
    the subspace $\Hrd(A)$ is a point (cf. {\upshape \ref{fct:dimkrosperp}});
  \item\label{regplane:war3}
    the line $A^\infty$ is regular in the (possibly degenerated) symplectic projective geometry 
    induced on $\hipa$.
  \end{enumerate}
\end{lem}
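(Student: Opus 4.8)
The plan is to derive Lemma~\ref{lem:reg:plane} as a specialization of Proposition~\ref{prop:reg:sub} together with Corollary~\ref{cor:reg:sub}, applied to the case $\dim(A) = 3$ (a plane in our dimension convention). First I would observe that since $A \not\subset \hipa$ and $A^\infty = A \cap \hipa$ is a hyperplane in $A$, we have $\dim(A^\infty) = 2$, i.e. $A^\infty$ is a line, as stated. The equivalence \eqref{regplane:war1}$\iff$\eqref{regplane:war2} is then nothing more than the equivalence \eqref{regsub:war1}$\iff$\eqref{regsub:war2} of Proposition~\ref{prop:reg:sub}, so this costs nothing.

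The content is in \eqref{regplane:war1}$\iff$\eqref{regplane:war3}. Here I would invoke Corollary~\ref{cor:reg:sub} with $k = 3$: since $2 \ndiv 3$, the corollary says that $A$ is regular iff $A^\infty$ is regular in the symplectic geometry induced on $\hipa$. That is precisely condition \eqref{regplane:war3}. Alternatively, one can read this off directly from Proposition~\ref{prop:reg:sub}\eqref{regsub:war3}: for a line $A^\infty$ in a symplectic space, the case $\rdim(A^\infty) = 1$ cannot be refined to option (b) being vacuous --- actually the cleaner route is to note that a line in a symplectic geometry over a field of characteristic $2$ is either totally isotropic (radical the whole line, $\rdim = 2$) or regular ($\rdim = 0$); it is never the case that $\Rad(A^\infty)$ is a single point, because the alternating form restricted to a $2$-dimensional space is either identically zero or nondegenerate. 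Hence option (b) of Proposition~\ref{prop:reg:sub}\eqref{regsub:war3} never applies to a plane, and \eqref{regsub:war3} collapses to ``$A^\infty$ is regular'', which is \eqref{regplane:war3}.

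So the proof is essentially a two-line citation: by \ref{fct:dimkrosperp} and the hypothesis $A \not\subset \hipa$, $\Hrd(A)$ is at least a point, so \eqref{regplane:war1}$\iff$\eqref{regplane:war2} by \ref{prop:reg:sub}; and since $\dim(A) = 3$ is odd, \ref{cor:reg:sub} gives \eqref{regplane:war1}$\iff$\eqref{regplane:war3}. I would probably also remark explicitly (for the reader's comfort) why the ``degenerate symplectic'' phrasing in \eqref{regplane:war3} is harmless: whether or not $\hipa$ itself has a one-dimensional radical, a line $A^\infty$ on $\hipa$ is regular in the induced geometry exactly when it avoids $\Rad(\hipa)$ and the form does not vanish on it, and this is the notion of regularity already in play.

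I do not expect a genuine obstacle here; the only thing to be careful about is bookkeeping with the dimension convention (a ``plane'' has $\dim = 3$, a ``line'' has $\dim = 2$), so that the parity argument ``$2 \ndiv \dim(A)$'' is applied to the right number. The statement that the first alternative in \ref{prop:reg:sub}\eqref{regsub:war3} is the operative one --- rather than the $\Rad(A^\infty)$-is-a-point alternative --- follows from the remark already made in the excerpt that $A^\infty$ may be regular iff $2 \div \dim(A^\infty)$, which holds since $\dim(A^\infty) = 2$.
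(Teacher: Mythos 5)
Your proposal is correct and matches the paper exactly: the paper gives no separate proof of this lemma, stating only that it is ``a direct consequence of \ref{prop:reg:sub} and \ref{cor:reg:sub}'', which is precisely the two-line citation you describe (with the parity observation $2\ndiv\dim(A)=3$ selecting the right clause of \ref{cor:reg:sub}). Your extra remark that an alternating form on a two-dimensional space is either zero or nondegenerate, so $\rdim(A^\infty)\in\{0,2\}$ and option (b) of \ref{prop:reg:sub}\eqref{regsub:war3} never applies to a plane, is a correct and welcome elaboration of why the reduction is immediate.
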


\begin{lem}\label{lem:planeonhipa}
  Let $A$ be a plane contained in $\hipa$. Then either
  \begin{sentences}
  \item
    $\Rad(A)$ is a point and all non-regular lines on $A$ form a pencil through that point, or
  \item 
    $\Rad(A)$ is a line and no line on $A$ is regular.
  \item
    $\Rad(A)$ is the entire plane $A$, i.e. $A$ is totally isotropic.  
  \end{sentences}
\end{lem}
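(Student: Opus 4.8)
The plan is to work entirely inside the symplectic geometry induced on $\hipa$, since $A\subset\hipa$ means the relevant conjugacy is $\perp_{\hipa}$, which is a (possibly degenerate) symplectic polarity. First I would analyze $\Rad(A) = A\cap A^\perp$. Because the ambient symplectic form is alternating, every line on $A$ is either totally isotropic or regular; moreover a point $q\in A$ is singular with respect to $\perp_A$ (the restriction of $\perp$ to the plane $A$) exactly when $q\perp A$, i.e.\ $q\in\Rad(A)$. The possible values of $\dim(\Rad(A))$ are $0$, $1$, or $2$ — but the case $\dim(\Rad(A))=0$ is impossible here, since a regular plane would be a nondegenerate symplectic plane, which forces even dimension $2$, fine; wait, a symplectic form on a $2$-dimensional space \emph{can} be nondegenerate. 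So I must be careful: I would instead note that on a plane $A\subset\hipa$ the induced form $\perp_A$ is symplectic on a $2$-dimensional space, hence its radical has dimension $0$, $1$, or $2$ — but $\dim(\Rad(A))=0$ would make $A$ a regular plane contained in $\hipa$, and by Fact \ref{fct:reg:hip} applied to a line of $A$ (which has a regular hyperplane, namely a point of $A$, only in degenerate situations) — actually the cleanest route is: a line $L\subset A$ has $L^\perp\supseteq A^\perp$, and any point $q\in L$ satisfies $q\perp L$ automatically (alternating form), so $\Rad(L)\ne\emptyset$ always for $L\subset\hipa$; thus no line on $\hipa$ is regular \emph{unless} the induced form is such that $q\not\perp L$ for some $q$, which cannot happen for a $2$-dimensional alternating form restricted to the $1$-dimensional... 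I need to recheck: on a $2$-dimensional symplectic space the form is nondegenerate, and a line (the whole space) is \emph{regular} as a polar-space object. So regular lines on $\hipa$ \emph{do} exist. Good — then the case split is genuinely three-fold.

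So the real case analysis is on $d := \dim(\Rad(A)) \in\{0,1,2\}$, but relabeled: if $\Rad(A)$ is the whole plane ($d=2$) then $A$ is totally isotropic and every line is isotropic, giving sentence (iii). If $\Rad(A)$ is a line ($d=1$), I would argue that every line $L\subset A$ meets the radical line in at least a point $q$, and then $q\perp L$ together with $q\perp A\supset L$... more precisely I must show $L^\perp\supseteq L$. Take $L$; it meets $\Rad(A)$ in a point $q$ (two lines in a plane meet). For any $x\in L$ write $x\in\langle q,y\rangle$ with $y\in L$; since $q\in\Rad(A)$, $q\perp x$; and I need $y\perp x$ — but $x,y$ span $L$, and I claim $\Rad(L)\ni q$ suffices only if also $x\perp L$. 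Here I should instead use that $\dim(\Rad(A))=1$ forces $A/\Rad(A)$ to carry a nondegenerate symplectic form on a $1$-dimensional space — impossible, a nondegenerate alternating form needs even dimension. Hence $\dim(\Rad(A))=1$ is itself \emph{impossible}? That can't be, since the lemma lists it. The resolution: $A\cap A^\perp$ is computed in the \emph{whole} space, not just in $\hipa$, so $A^\perp$ is large and $\Rad(A)$ can legitimately be $1$-dimensional. Then for a line $L\subset A$: $L^\perp \supseteq A^\perp$, and since $L$ is a hyperplane of $A$ the parity/dimension count (as in Fact \ref{fct:dimkrosperp} and \ref{fct:reg:hip}) gives $\rdim(L)\ge 1$ unless... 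Actually Fact \ref{fct:reg:hip} directly gives that a line $L$ which is a hyperplane of the plane $A$ has $\rdim(L)\le 1$, the wrong direction. The correct tool: any line $L\subset\hipa$ has $\rdim(L)\ge 1$ always, because the alternating form makes every vector of $L$ self-perpendicular, so $L\subseteq L^\perp$ forcing $\Rad(L)=L$... no: $L\subseteq L^\perp$ means $\Rad(L)=L\cap L^\perp=L$, so \emph{every line inside $\hipa$ is totally isotropic}! Then regular lines on $\hipa$ do \emph{not} exist, contradicting my earlier worry — and the trichotomy in the lemma is about lines being regular as lines \emph{in the plane $A$ with respect to $\perp_A$}? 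No — regularity is always with respect to $\perp$. I would reconcile this by recognizing that "no line on $\hipa$ is regular" is already noted after \ref{cor:regularneWpeku}, so sentences (ii) and (iii) are consistent, and sentence (i)'s "regular lines" must mean regular with respect to $\perp_A$, i.e.\ the induced symplectic structure on the plane $A$.

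With that reading fixed, the proof is: pass to the symplectic plane $(A,\perp_A)$. Its radical $\Rad_{\perp_A}(A)$ has dimension $0$, $1$, or $2$ by standard symplectic theory; but dimension $0$ is impossible (nondegenerate alternating form on odd dimension $2$ — wait, $2$ is even)... The genuinely clean statement: a symplectic form on a $2$-space is either nondegenerate (radical dimension $0$, giving a hyperbolic plane) or identically zero (radical dimension $2$); radical dimension $1$ is impossible for an alternating form. Yet the lemma's sentence (i) says $\Rad(A)$ is a \emph{point} — so this $\Rad(A)$ is $\Rad_\perp(A)$ computed in $\fixproj$, not $\Rad_{\perp_A}(A)$. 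The key computational step, then, is: $\Rad_\perp(A)=\Rad_\perp(A)$ is contained in $A^\infty=A$ (automatic, $A\subset\hipa$), and I compute its dimension via $\dim(A+A^\perp)\le n$. For the three cases I track $\dim(A^\perp)$, which depends on whether the form $\xi$ is of type \eqref{forma:typ1} or \eqref{forma:typ2} and on the position of $A$ relative to $\biegun$ and $\Rad(\hipa)$. The decisive fact is $\dim(\Rad(\hipa))\le 1$ (stated in Section \ref{sec:pseudo-polarity}): when $\biegun\notin\hipa$, $\Rad(\hipa)=\emptyset$ and $\perp_{\hipa}$ is nondegenerate symplectic, so for a plane $A\subset\hipa$ one has $\dim(\Rad_{\perp_{\hipa}}(A))\in\{0,2\}$; a $0$ radical is impossible only if $2$-dim space can't be nondegenerate symplectic — it can, so $\Rad$ could be empty, meaning $A$ is a hyperbolic plane in $\hipa$, and then I must check what $\Rad_\perp(A)$ is back in $\fixproj$: since $x^\perp=\biegun+(x^\perp\cap\hipa)$ for $x\in\hipa$, we get $A^\perp=\biegun + (A^{\perp}\cap\hipa)=\biegun+\Rad_{\perp_{\hipa}}(A)^{\cdots}$, forcing $\Rad_\perp(A)=A\cap A^\perp$ to be a point $\{q\}$ where $q$ is the unique point of $A$ in the direction contributed — this is sentence (i). When instead $\perp_{\hipa}$ is degenerate ($\biegun\in\hipa$, $\Rad(\hipa)=\biegun$), a plane $A$ containing $\biegun$ has a $\ge 1$-dimensional radical, splitting into sentences (ii) and (iii) according to whether $\dim\Rad(A)$ is $1$ or $2$; this is the combinatorial content that "all non-regular lines form a pencil through the radical point" when the radical is exactly a point, which follows because a line $L\subset A$ is non-regular (w.r.t.\ $\perp_A$) iff $L$ meets $\Rad_{\perp_A}(A)$, and through a point in a projective plane the lines meeting a fixed point form precisely the pencil at that point.

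\medskip

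The main obstacle I anticipate is \emph{not} any deep geometry but rather being scrupulously consistent about \emph{which} perpendicularity (the ambient $\perp$ on $\fixproj$ versus the induced $\perp_{\hipa}$, versus the further induced $\perp_A$ on the plane) is meant in the word "regular" in each of the three sentences, and then doing the $\dim(A+A^\perp)\le n$ bookkeeping correctly in each of the cases $\biegun\in\hipa$ / $\biegun\notin\hipa$. Once the reading is pinned down — sentence (i) refers to $\Rad_\perp(A)$ while the line-regularity statements refer to lines being non-isotropic inside the symplectic plane $A$ — the three cases fall out directly from the classification of $2$-dimensional symplectic spaces (nondegenerate, or rank-one-radical impossible, or zero form) combined with the single extra dimension coming from $\biegun$ via the identity $x^\perp=\biegun+(x^\perp\cap\hipa)$. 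The pencil assertion in sentence (i)/(ii) is then the elementary projective-plane fact that the lines through a fixed point form a pencil, applied to the radical point.
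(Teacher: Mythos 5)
Your plan contains a genuine error that derails it before the real argument starts. You claim that for a line $L\subset\hipa$ ``the alternating form makes every vector of $L$ self-perpendicular, so $L\subseteq L^\perp$'', and conclude that every line inside $\hipa$ is totally isotropic and that regular lines on $\hipa$ do not exist. The inference is invalid: $\xi(v,v)=0$ for all $v\in L$ does not give $\xi(u,v)=0$ for all $u,v\in L$; a line of $\hipa$ spanned by a hyperbolic pair of the symplectic form $\perp_{\hipa}$ is regular, and the remark after \ref{cor:regularneWpeku} says only that every \emph{nonregular} line on $\hipa$ is totally isotropic. Your subsequent ``resolution'' --- reinterpreting ``regular'' in sentence (i) as regularity with respect to $\perp_A$ --- is therefore motivated by a false premise; it happens to be harmless only because for $L\subset A$ the radical $\{x\in L\colon x\perp L\}$ depends only on $\xi$ restricted to $L$, so the three candidate notions coincide, but no reinterpretation is needed.

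The second problem is a persistent dimension slip: you treat the plane $A$ as a $2$-dimensional symplectic space (a ``hyperbolic plane'' with radical of dimension $0$ or $2$), whereas in this paper a plane has vector dimension $3$. The relevant parity fact is that an alternating form on an odd-dimensional space is degenerate, which is exactly the paper's one-line opening: $1\le\rdim(A)$ since $\dim(A)=3$ and $A\subseteq\hipa$. (Carried out correctly, the parity count gives $\rdim(A)\in\{1,3\}$, so it is case (ii), not case (i), that turns out to be vacuous.) This slip also produces your incorrect alignment of the trichotomy with the position of $\biegun$: totally isotropic planes exist inside a nondegenerate symplectic $\hipa$ of large enough dimension, and most planes of $\hipa$ have a point radical even when $\biegun\in\hipa$. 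Finally, the only nontrivial content of sentence (i) --- that a nonregular line of $A$ must pass through the radical point $p$ --- is asserted as an unexplained ``iff''. The paper proves it in two lines: if $q\in\Rad(L)$ for a line $L\subset A$ with $p\notin L$, then $q\perp L$ and $q\perp p$ (since $p\in\Rad(A)$ and $q\in A$), hence $q\perp L+p=A$ and $q$ lies in $\Rad(A)=\{p\}$, a contradiction. Supplying that argument and replacing the $2$-dimensional bookkeeping by the odd-dimension parity observation is what is needed to turn your plan into a proof.
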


\begin{proof}
  It is clear that $1\le\rdim(A)$ as $\dim(A)=3$ and $A\subseteq\hipa$. 
  
  In case $p := \Rad(A)$ is a point
  then $A\subseteq p^\perp$, so $p$ is the radical of every line on $A$ through $p$.
  Note that if $q$ is the radical of some line on $A$ not through $p$, then 
  $A\subseteq q^\perp$ and we would have $\LineOn(p,q) = \Rad(A)$ which is impossible.
  
  Now, if $L := \Rad(A)$ is a line, then every line of $A$ crosses $L$ and thus
  is non-regular.
\end{proof}

\begin{rem*}
  Let $A$ be a plane not contained in $\hipa$. If\/ $\Hrd(A)$
  is a point $p$ not on $\hipa$ then $A$ is regular, but no line through
  $p$ contained in $A$ is regular.
\end{rem*}

Further we assume that:
\begin{ctext}\itshape
  lines of\/ \fixproj\ are of size at least 6,
\end{ctext}
which means that the ground field of \fixproj\ is not $\GF(2)$ and not $\GF(4)$.
Most of our reasonings remain true for $\GF(4)$ and those few which fail will be
indicated.

\section{Grassmannians of regular points and lines}

\subsection{Regular point-line geometry} 

In what follows we shall pay attention to the 
Grassmannian of regular points and lines of the pseudo-polarity $\perp$, namely
\begin{cmath}
  \fixgras_1 := \GrasSpace(\reghipy,1) = \struct{\reghipy_1,\reghipy_2,\subset}.
\end{cmath}
Observe, first, that the set $\reghipy_1$ is simply the point-complement of
the hyperplane $\hipa$. Let 
$\fixaf = \struct{\reghipy_1,\lines}$ be the affine space obtained from
\fixproj\ by deleting the hyperplane $\hipa$;
then $\regafhipy_2\subset\lines$.

\begin{fact}\label{fct:isol1}
  If\/ $\biegun\in\reghipy_1$, then $\biegun$ is an isolated point in $\fixgras_1$.
  If\/ $L\in\reghipy_2$ and $L\subset\hipa$, then $L$ is an isolated line in $\fixgras_1$.
\end{fact}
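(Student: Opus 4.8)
The statement asserts two isolatedness facts in the Grassmannian $\fixgras_1 = \struct{\reghipy_1,\reghipy_2,\subset}$, where incidence is inclusion: (a) if $\biegun$ is a regular point then no regular line contains $\biegun$; (b) if $L$ is a regular line contained in $\hipa$ then no regular point lies on $L$. The plan is to reduce each part directly to the regularity criteria for lines already established, namely Lemma \ref{lem:reg:line} and its corollaries.

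For part (b) the argument is immediate: a regular line $L$ that lies on $\hipa$ consists entirely of selfconjugate points, so $L\cap\reghipy_1=\emptyset$ since $\reghipy_1$ is exactly the complement of $\hipa$. Hence no point of $\fixgras_1$ is incident with $L$, i.e. $L$ is isolated. (One should note in passing that such regular lines on $\hipa$ do exist precisely when the induced symplectic polarity on $\hipa$ is nondegenerate or has a radical not meeting $L$, consistent with the remark following \ref{lem:reg:line}; but for isolatedness we only need the trivial observation about point sets.)

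For part (a) suppose $\biegun\in\reghipy_1$, which by Section \ref{subsubsec:b0H} forces the case $\biegun\notin\hipa$, i.e. $n=\dim(\fixV)=2k+1$ and $\biegun^\perp=\hipa$. Take any line $L$ through $\biegun$; I would split into the two cases of Corollary \ref{cor:regularneWpeku} applied with $p=\biegun$. If $L\subset\hipa$ then $L$ is isotropic (every line on $\hipa$ through a point of $\Rad(\hipa)$, or here through $\biegun$ whose perp is all of $\hipa$, is totally isotropic), so $L$ is not regular. If $L\not\subset\hipa$, then $L$ is an affine line through $\biegun$, and by Lemma \ref{lem:regul3biegun} (the case $\biegun\notin\hipa$) no affine line through $\biegun$ is regular. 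Either way $L\notin\reghipy_2$, so no line of $\fixgras_1$ passes through $\biegun$; thus $\biegun$ is isolated.

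No step is a genuine obstacle here — the content was done in Lemmas \ref{lem:reg:line} and \ref{lem:regul3biegun}. The only point requiring a word of care is making explicit that $\biegun\in\reghipy_1$ already pins down the case $\biegun\notin\hipa$ (since in the other case $\Rad(\hipa)=\biegun\subset\hipa$, so $\biegun$ is not a regular point at all), so that \ref{lem:regul3biegun} is applicable in the form needed.
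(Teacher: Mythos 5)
Your proof is correct and follows essentially the same route as the paper, which disposes of the first claim by citing Lemma \ref{lem:regul3biegun} (after noting that $\biegun\in\reghipy_1$ forces $\biegun\notin\hipa$) and calls the second claim trivial since $\reghipy_1$ is the complement of $\hipa$. The only cosmetic remark is that your subcase $L\subset\hipa$ in part (a) is vacuous, as no line through $\biegun\notin\hipa$ lies on $\hipa$.
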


\begin{proof}
  The first statement restates \ref{lem:regul3biegun}, while the other is trivial. 
\end{proof}

Let $\fixafr_1$ be the structure obtained from $\fixgras_1$
by deleting its isolated points and  lines.
Then
\begin{cmath}
  \fixafr_1 = \struct{\regbhipy_1,\linesr,\in}
\end{cmath}
Note that $\fixafr_1$ is a substructure of \fixaf.
This structure, primarily, and related structures will be investigated in this section.

A plane $A$ of $\fixproj$ not contained in $\hipa$ is a plane of $\fixaf$ and
will be  referred to as an affine plane; $A^\infty$ is the set of its improper
points. 
Similarly, a line $L$ of $\fixproj$  not contained in $\hipa$ is a line of
$\fixaf$ and will be referred to as an affine line; $L^\infty$ is its improper
point.

So, let $A$ be an affine plane and set $L := A^\infty$.
\begin{fact}
  Through each $q\in L$ there passes a nonregular affine line $M$ contained in $A$
  (in every direction in $A$ there is a nonregular line contained in $A$).
\end{fact}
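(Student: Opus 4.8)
The plan is to fix a point $q\in L=A^\infty$ and to exhibit a nonregular affine line through $q$ inside $A$, using Lemma~\ref{lem:reg:line} as the working criterion. Recall that an affine line $M\subset A$ with $M^\infty=q$ is regular iff $M\not\subset q^\perp$; hence I must produce such an $M$ with $M\subset q^\perp$. Equivalently, I want a point $p\in A\setminus\hipa$ with $p\perp q$, for then $\LineOn(p,q)$ is an affine line through $q$, contained in $A$, and contained in $q^\perp$, so nonregular.

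So the task reduces to: the hyperplane $q^\perp$ of $\fixproj$ meets $A\setminus\hipa$. Since $\dim(A)=3$ and $q\in A$, the intersection $A\cap q^\perp$ is a subspace of $A$ of dimension at least $2$ (it contains $q$ and is cut out by a single linear equation on $A$; it cannot be smaller than a line because $q\in A\cap q^\perp$ and one linear condition drops dimension by at most one, while $q^\perp\supsetneq\{q\}$ on $A$ as $q$ is selfconjugate so $q\in q^\perp$). If $A\cap q^\perp$ were entirely contained in $\hipa$, then this line-or-plane would lie in $A^\infty=L$, forcing $A\cap q^\perp= L$ (dimension reasons), i.e. $L\subset q^\perp$. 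But $q\in L\subset\hipa$ and the induced geometry on $\hipa$ is symplectic, so $L\subset q^\perp$ would say $q\in\Rad(L)$ in the symplectic sense, making $L$ a nonregular line on $\hipa$; that is possible. To rule it out I instead argue directly in $A$: $A$ is an affine plane, $L$ its improper line, and $A\cap q^\perp$ is a subspace of $A$ strictly larger than $\{q\}$ and not equal to $A$ (else $A\subset q^\perp$, whence $q\in\Rad(A)$, contradicting that $\Rad(A)\subseteq A^\infty$ is forced to be either empty — if $A$ regular — or a point that cannot be an arbitrary $q$... I must be careful here). The clean way: $A\cap q^\perp$ has dimension exactly $2$ unless $A\subseteq q^\perp$. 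In the latter case $q\in A\cap A^\perp=\Rad(A)$, so $A$ is nonregular and, by Lemma~\ref{lem:reg:plane}, $\Hrd(A)$ is not a point; but even then pick any affine point $p\in A$ (these exist, $A\not\subset\hipa$) and note $p\in A\subseteq q^\perp$, so $\LineOn(p,q)$ works. In the former case $A\cap q^\perp$ is a line $M'$; it cannot equal $L$ unless $L\subseteq q^\perp$, and if $L\subseteq q^\perp$ then again choosing $p\in L\cap q^\perp$... no — I need an affine point. So the genuine content is exactly the subcase $A\cap q^\perp = L\subseteq q^\perp$ with $A\not\subseteq q^\perp$: here every point of $q^\perp$ inside $A$ is improper, and I have produced no affine nonregular line through $q$ of the form $\LineOn(p,q)$.

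The resolution of that subcase — and the main obstacle — is to drop the requirement that the nonregular line pass through an affine point witnessing $p\perp q$, and instead note that the claim only asserts \emph{existence} of a nonregular affine line through $q$ in $A$, not that it be visibly inside $q^\perp$. When $A\cap q^\perp=L$, consider the restriction $\perp_A$: if $A$ is regular, by the second Note after Proposition~\ref{prop:reg:sub} it is a pseudo-polarity on $A$ with hyperplane $L=A^\infty$ and pole $\Hrd(A)$; by Lemma~\ref{lem:regul3biegun} applied inside $A$ to this pseudo-polarity, the lines through the pole $\Hrd(A)$ that are not on $L$ are nonregular (case $\Hrd(A)\notin A^\infty$, i.e. $k=\dim A$ odd — wait, $\dim A=3$ is odd, matching the Note) — and such a line passes through $q$ only if $q=\Hrd(A)$, which need not hold. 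Hence the correct final step is: the nonregular affine lines in $A$ through a \emph{given} improper point $q$ are governed by Corollary~\ref{cor:regularneWpeku} in reverse — a line through an affine point $p$ in direction $q$ is nonregular iff it meets $p^\perp\cap\hipa$, i.e. iff $q\in p^\perp$ — so I must find, for the given $q$, some affine $p\in A$ with $q\in p^\perp$, equivalently $p\in q^\perp\cap(A\setminus\hipa)$, which is nonempty precisely when $A\cap q^\perp\not\subseteq\hipa$. Thus the whole statement is equivalent to $A\cap q^\perp\not\subseteq A^\infty$ for every $q\in A^\infty$, and the hard part is showing this cannot fail: if $A\cap q^\perp\subseteq A^\infty=L$ then, since $q\in A\cap q^\perp$ has dimension $\ge 2$, we get $A\cap q^\perp=L$, so $L\subseteq q^\perp$, i.e. $q\in\Rad_{\hipa}(L)$; but $L=A^\infty$ is the improper line of an affine plane $A\not\subseteq\hipa$, and I will derive a contradiction from $q\in\Rad(A\cap q^\perp)$ being incompatible with $A$ carrying, on its improper line, only radical behaviour at $q$ while $A$ itself extends off $\hipa$ — concretely, $L\subseteq q^\perp$ together with $\biegun+(q^\perp\cap\hipa)=q^\perp$ (the displayed identity for $q\in\hipa$ in Section~\ref{sec:pseudo-polarity}) forces $L\subseteq q^\perp\cap\hipa$ and then $A=L+(\text{affine point})\subseteq q^\perp$ iff that affine point lies in $q^\perp$, which by the identity means it lies in $\biegun+(q^\perp\cap\hipa)$. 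I expect to close this by a short dimension count showing $q^\perp\cap A$ must pick up an affine point once $\dim A=3$ and $q\in A^\infty$, so the degenerate subcase is vacuous, completing the proof.
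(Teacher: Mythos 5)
Your working criterion and overall plan coincide with the paper's one\-/line proof: consider $A\cap q^\perp$, which is at least a line since $q^\perp$ is a hyperplane and $\dim(A)=3$, and try to extract from it an affine point $p\in A$ with $p\perp q$, so that $\LineOn(p,q)\subseteq q^\perp$ is nonregular by \ref{lem:reg:line}. You also correctly isolate the only troublesome subcase, $A\cap q^\perp=L$, which occurs exactly when $L\subseteq q^\perp$ but $A\not\subseteq q^\perp$, i.e.\ when $q\in\Rad(L)\setminus\Rad(A)$; there every affine line of $A$ through $q$ meets $q^\perp$ only in $q$ and so is regular. The genuine gap is that you never close this subcase: your argument ends with ``I expect to close this by a short dimension count showing \dots\ the degenerate subcase is vacuous,'' and no such count exists, because the subcase is not vacuous. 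Take $L$ totally isotropic and $\Rad(A)$ a single point $q_0$ (the situation ``$L$ nonregular, $\Rad(A)$ a point'' of \ref{fct:classif:planes}); then for every $q\in L$ with $q\neq q_0$ one has $L=\Rad(L)\subseteq q^\perp$ while $A\not\subseteq q^\perp$ (since $q\notin\Rad(A)$), hence $A\cap q^\perp=L$ and no nonregular affine line of $A$ has direction $q$. Concretely, for the type \eqref{forma:typ1} form on a $5$-dimensional $\field V$, the plane $A=\langle e_1,e_3,e_0+e_2\rangle$ has $A^\infty=\langle e_1,e_3\rangle$ totally isotropic and $\Rad(A)=\langle e_3\rangle$.

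So the statement, read literally, fails in exactly that case; this is consistent with (indeed forced by) the paper's own \ref{fct:classif:planes} and \ref{fct:plane:srodek}, where for such a plane the nonregular lines form a single parallel pencil in the direction $\srodek(A)=q_0$, so every other direction carries only regular lines. The paper's printed proof (``It suffices to consider $A\cap q^\perp$, which is at least a line'') silently assumes this line is not contained in $\hipa$ and thus has the very hole you ran into; your analysis is in fact more careful than the printed one, it just cannot be completed as stated. The honest repairs are either to add the hypothesis $\Rad(L)\subseteq\Rad(A)$ (equivalently: $L$ is regular, or $\Rad(A)=L$), under which your argument goes through verbatim, or to weaken the conclusion to: a nonregular affine line of $A$ exists in the direction $q$ for every $q\in(L\setminus\Rad(L))\cup\Rad(A)$.
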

\begin{proof}
  It suffices to consider $A\cap q^\perp$, which is at least a line.
\end{proof}
\begin{fact}
  If $A$ contains two parallel nonregular lines then $A$ is nonregular as well.
\end{fact}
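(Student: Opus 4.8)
The plan is to argue contrapositively via the criterion for plane-regularity proved in Lemma~\ref{lem:reg:plane}: an affine plane $A$ is regular iff its improper line $A^\infty$ is regular in the induced symplectic geometry on $\hipa$. So it suffices to show that if $A$ carries two parallel nonregular affine lines $M, M'$, then $A^\infty$ is nonregular in that symplectic geometry, i.e. $A^\infty$ meets its own symplectic conjugate inside $\hipa$.

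First I would unpack ``parallel'': two affine lines in $A$ are parallel exactly when they share their improper point, say $q := M^\infty = (M')^\infty \in A^\infty = L$. By Lemma~\ref{lem:reg:line}, the line $M$ not contained in $\hipa$ is regular iff $M\not\subset(M^\infty)^\perp=q^\perp$; since $M$ is nonregular, $M\subset q^\perp$, and likewise $M'\subset q^\perp$. Now $M$ and $M'$ are distinct affine lines through the same improper point $q$, hence they are not equal but lie in the common plane $A$; together they span $A$ (two distinct lines through a point $q$ with $M\cup M'$ spanning a plane — indeed $M\neq M'$ forces $\dim(M+M')=3$ once we note $M\cap M'$ is at most the point $q$, and in fact $M, M'$ are coplanar in $A$ so $M+M'=A$). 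Therefore $A = M + M' \subseteq q^\perp$. In particular $L = A^\infty \subseteq q^\perp$. Since $q\in L\subseteq\hipa$, intersecting with $\hipa$ gives $q\in L\cap q^\perp\cap\hipa = L\cap (q^\perp\cap\hipa)$, and $q^\perp\cap\hipa$ is precisely the symplectic conjugate of the point $q$ in the induced symplectic polarity $\perp_\hipa$ on $\hipa$. Thus $q$ is a point of $L$ lying on the symplectic conjugate of $q$, so $L$ meets $\Rad$ of itself — equivalently $L$ is a nonregular (totally isotropic, being a line) subspace of the symplectic space on $\hipa$. By Lemma~\ref{lem:reg:plane}, $A$ is nonregular.

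The one point that needs a little care — the place I'd expect a referee to look — is the span claim $M + M' = A$: this uses that $M$ and $M'$ are genuinely distinct lines (not merely distinct as affine lines but possibly with the same projective closure). If $M$ and $M'$ had the same projective line, ``parallel'' would be vacuous/degenerate; under the standing hypothesis that they are two parallel lines in $A$ we take them distinct, and two distinct coplanar lines through a common point span the plane, so $M+M'=A\subseteq q^\perp$ as claimed. Everything else is a direct application of Lemma~\ref{lem:reg:line} and Lemma~\ref{lem:reg:plane}; no computation with the explicit forms of $\xi$ is required.
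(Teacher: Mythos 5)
Your proof is correct and rests on the same key observation as the paper's: the two parallel nonregular lines share their improper point $q$, each is contained in $q^\perp$ by Lemma~\ref{lem:reg:line}, and hence $q\perp M+M'=A$. The paper stops there, since $q\in A\subseteq q^\perp$ already gives $q\in\Rad(A)$; your further detour through $A^\infty$ and Lemma~\ref{lem:reg:plane} is harmless but unnecessary.
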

\begin{proof}
  Let $M_1\parallel M_2$ be nonregular, $M_1,M_2 \subset A$. Take $q = M_1^\infty$.
  Then $q \perp M_1,M_2$, so $q \perp M_1 + M_2 = A$ i.e. $q \in\Rad(A)$.
\end{proof}
\begin{fact}
  If $A$ contains a triangle with all its sides nonregular then
  $\Rad(A) = A^\infty$ and no affine line contained in $A$ is regular.
\end{fact}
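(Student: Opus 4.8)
The plan is to exploit the fact that the three vertices of the triangle, together with the three lines, already span $A$ and already force the improper line $L = A^\infty$ into the radical. Let the triangle have vertices $p_1, p_2, p_3$ and sides $M_i = \LineOn(p_j, p_k)$ (with $\{i,j,k\} = \{1,2,3\}$), all three sides being nonregular affine lines in $A$. Set $q_i := M_i^\infty$. As in the previous Fact, each $M_i$ being nonregular and not contained in $\hipa$ means $q_i \perp M_i$, i.e. $q_i \in \Rad(M_i)$. Since $A = M_i + p_i$ and $q_i \in L = A^\infty$ while $p_i$ is a vertex lying on the other two sides, we get $q_i \perp M_i$ and we need one more point to conclude $q_i \perp A$. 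Here the key observation is that the three improper points $q_1, q_2, q_3$ of the sides are pairwise distinct and all lie on the line $L$ (the sides are genuinely different affine lines, so they meet $\hipa$ in different points unless two of them are parallel; but two parallel nonregular sides would already give $\Rad(A) \supset A^\infty$ by the preceding Fact, and then we are done immediately). So assume $q_1, q_2, q_3$ are three distinct points of $L$; since a line has at least six points, this is consistent, and in particular $L = \LineOn(q_i, q_j)$ for any two of them.

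Next I would show each $q_i \in \Rad(A)$. Fix $i$ and pick $j \neq i$. The side $M_j$ passes through the vertex $p_i$ (since $M_j = \LineOn(p_i, p_k)$), so $p_i \in M_j$, and also $q_j = M_j^\infty \in M_j$. Now $q_i \in L = A^\infty$, and I claim $q_i \perp p_i$: indeed $M_i$ contains $p_j, p_k$ and $M_i^\infty = q_i$, and on the side $M_j$ we have $q_i \perp$ nothing yet directly — so instead argue as follows. We know $q_i \perp M_i$, and $M_i$ contains the two vertices $p_j$ and $p_k$; hence $q_i \perp p_j$ and $q_i \perp p_k$. Since $A = \LineOn(p_j, p_k) + p_i = M_i + p_i$, it remains to see $q_i \perp p_i$. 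For that, use that $q_i \in L$ and consider the line $M_i$ again together with the symplectic structure on $\hipa$: the three points $q_1, q_2, q_3$ lie on $L$, and for the sides $M_j, M_k$ through $p_i$ we have $q_j \perp M_j \ni p_i$ and $q_k \perp M_k \ni p_i$, so $q_j \perp p_i$ and $q_k \perp p_i$. Hence $p_i^\perp \supset \LineOn(q_j, q_k) = L \ni q_i$, which gives exactly $q_i \perp p_i$. Combining, $q_i \perp M_i + p_i = A$, i.e. $q_i \in \Rad(A)$, for each $i = 1,2,3$.

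Finally, since $q_1, q_2, q_3$ are three distinct points of the line $L$, we have $L = \LineOn(q_1, q_2) \subseteq \Rad(A)$; and as $\Rad(A)$ is always contained in $A^\infty = L$ (the radical consists of selfconjugate points), we conclude $\Rad(A) = L = A^\infty$. For the second assertion: if $N \subseteq A$ were a regular line, then by \ref{lem:reg:line} (if $N \not\subseteq \hipa$) $N$ could not be contained in $(N^\infty)^\perp$; but $N^\infty \in L = \Rad(A) \subseteq A^\perp$, so $A \subseteq (N^\infty)^\perp$ and in particular $N \subseteq (N^\infty)^\perp$, contradiction. If instead $N \subseteq \hipa$, then $N = L$ or $N$ meets $L = \Rad(A)$ in a point of $\Rad(A) \subseteq \Rad(N)$, so $N$ is nonregular. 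Hence no affine line — indeed no line at all — contained in $A$ is regular.

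The step I expect to be the main obstacle is pinning down $q_i \perp p_i$ cleanly, i.e. organizing the incidences of the triangle so that the three improper points of the sides, each orthogonal to "its" side, collectively force orthogonality to the remaining vertex via $p_i^\perp \supseteq L$. The degenerate sub-case where two sides are parallel must be dispatched first (it reduces to the preceding Fact), and one should also note that the hypothesis "lines have size at least $6$" guarantees the configuration of three distinct points on $L$ is not vacuous, though in fact only distinctness of the $q_i$ is used.
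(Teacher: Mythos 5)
Your proposal is correct and follows essentially the same route as the paper: both rest on the relations $q_i \perp M_i$ (from \ref{lem:reg:line}) giving $a_i \perp q_j, q_k$ for $i\neq j,k$, hence $A \perp A^\infty$; you merely organize the computation from the side of the improper points $q_i$ instead of the vertices, and spell out the (immediate) second assertion. The parallel sub-case you set aside is in fact vacuous, since two sides of a triangle meet in an affine vertex and so cannot be parallel.
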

\begin{proof}
  Let $a_1,a_2,a_3$ be the vertices of a required triangle and 
  $q_i = {\LineOn(a_j,a_k)}^\infty$ for $\set{i,j,k} = \set{1,2,3}$. 
  Then 
  $a_i\perp q_j,q_k$, so $a_i \perp A^\infty$ for each $i$.
  Thus $A\perp A^\infty$.
\end{proof}

\begin{fact}\label{fct:classif:planes}
  The following possibilities may occur.
  \begin{description}
  \item[$L$ is nonregular:]
    Then $L$ is isotropic i.e. $L\perp L$, $A$ is nonregular, and we have two cases.
    \begin{description}
    \item[$\Rad(A)$ is the line $L$:]
      Then $L\perp A$ and $A$ contains no regular line.
      In this case $\Hrd(A) = A$.
    \item[$\Rad(A)$ is a point $q$:]
      Then $q\in L$. An affine line $K$ on $A$ is nonregular iff $K^\infty = q$.
      In this case $\Hrd(A) = L$.
    \end{description}
  \item[$L$ is regular:]
    Then $A$ is regular and $\Hrd(A)$ is an affine point $p$ on $A$. 
    An affine line $K$ on $A$ is nonregular iff $p\in K$.
  \end{description}
\end{fact}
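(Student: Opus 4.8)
The plan is to organize the proof around the classification of a line $L = A^\infty$ in the induced symplectic geometry on $\hipa$, exactly as the statement is structured. First I would invoke Lemma~\ref{lem:reg:plane}: since $A$ is a plane not contained in $\hipa$, its improper part $A^\infty$ is a line, and $A$ is regular if and only if $A^\infty$ is regular in the symplectic geometry on $\hipa$. This immediately splits the argument into the two top-level cases ``$L$ nonregular'' and ``$L$ regular''.

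In the case $L$ nonregular, I would first note that a nonregular line on $\hipa$ is totally isotropic --- this is recorded in the remark after Corollary~\ref{cor:regularneWpeku} (``every nonregular line on $\hipa$ is totally isotropic''), so $L \perp L$. By Lemma~\ref{lem:reg:plane} again, $A$ is nonregular, hence $\Rad(A) \neq \emptyset$; since $\Rad(A) \subset A^\infty = L$ and $\rdim(A) \leq 1$ (the plane $A$ has the line $L$ as a hyperplane of a non-regular subspace, but more directly $\rdim(A)\le\dim(A)$; here the relevant bound on $\Rad(A)$ comes from $\Rad(A)\subseteq A^\infty$ being a line, so it is either a point or all of $L$). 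If $\Rad(A) = L$, then $L \perp A$; every affine line $K$ on $A$ meets $L$ in its improper point $K^\infty \in L \subseteq \Rad(A)$, so $K^\infty \perp A \supseteq K$, giving $K^\infty \in \Rad(K)$ and $K$ nonregular --- no affine line on $A$ is regular. Moreover $A \subseteq L^\perp = (A^\infty)^\perp$, so $\Hrd(A) = A \cap (A^\infty)^\perp = A$. If instead $\Rad(A)$ is a single point $q$, then $q \in A^\infty = L$ and $A \subseteq q^\perp$ but $A \not\subseteq L^\perp$ (else $\Rad(A) \supseteq L$). An affine line $K \subset A$ has $K \perp q$ automatically (since $q\in\Rad(A)$), so $K$ is nonregular exactly when $q \in \Rad(K)$, i.e. when $q \in K$, i.e. when $K^\infty = q$ (as $K$ is affine and meets $L$ in one point). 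Finally $\Hrd(A) = A \cap (A^\infty)^\perp$; this contains $L = A^\infty$ because $L \perp L$, and cannot be all of $A$ since $A \not\subseteq L^\perp$, so $\Hrd(A) = L$.

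In the case $L$ regular, Lemma~\ref{lem:reg:plane} gives that $A$ is regular and that $\Hrd(A)$ is a point, say $p$; by Fact~\ref{fct:dimkrosperp} it is at least a point, and here exactly a point. Since $L = A^\infty$ is regular in the symplectic geometry, $L \not\perp L$, so $\Hrd(A) = A \cap L^\perp$ cannot contain $L$; being a point on $A$, it is therefore an affine point $p$ on $A$ (i.e. $p \in A \setminus \hipa$). For the characterization of nonregular affine lines on $A$: an affine line $K \subset A$ has improper point $K^\infty = K \cap L$, and $K$ is nonregular iff $K^\infty \perp K$ (Lemma~\ref{lem:reg:line}, since $K \subseteq (K^\infty)^\perp$ iff $K^\infty \in \Rad(K)$). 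Now $K$ passes through $p$ iff $p \in K$; I would show $p \in K \iff K^\infty \perp K$. If $p \in K$, then $K = \LineOn(p, K^\infty)$; since $p \in \Hrd(A) \subseteq (A^\infty)^\perp \subseteq (K^\infty)^\perp$, i.e. $K^\infty \perp p$, and $K^\infty \perp K^\infty$ would be needed as well --- but $K^\infty \in L$ and $L$ being regular does not force $K^\infty \perp K^\infty$; rather every point of a line in a symplectic geometry is self-conjugate, so $K^\infty \perp K^\infty$ holds. Hence $K^\infty \perp \gen{p, K^\infty} = K$, so $K$ is nonregular. Conversely, if $K^\infty \perp K$, then $K^\infty \in \Rad(K) \subseteq K^\infty$, and since $K^\infty \perp K$ and also $K^\infty \perp K^\infty$ always, consider the plane spanned: $K^\infty \perp K$ together with $K^\infty \perp L$ (symplectic, $K^\infty\in L$) gives $K^\infty \perp K + L = A$, so $K^\infty \in A \cap (A^\infty)^\perp = \Hrd(A) = \{p\}$; but $K^\infty \in L = \hipa$ while $p \notin \hipa$, contradiction --- unless in fact $K^\infty = p$ is impossible, so the only way $K^\infty \perp K$ can hold is... here I need to be careful. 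The cleaner route is: a line $K$ on $A$ through $p$ is nonregular by the Remark before the phrase ``Further we assume'' (``If $\Hrd(A)$ is a point $p$ not on $\hipa$ then $A$ is regular, but no line through $p$ contained in $A$ is regular''), and conversely by counting --- there is a unique pencil of nonregular lines in a regular affine plane (one in each relevant direction fails, as in the earlier Facts), and the pencil through $p$ already exhausts them.

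The main obstacle I anticipate is the bookkeeping in the ``$L$ regular'' subcase: pinning down precisely that the nonregular affine lines on $A$ are \emph{exactly} the pencil through $p = \Hrd(A)$, rather than merely that lines through $p$ are nonregular. The inclusion ``through $p$ $\Rightarrow$ nonregular'' is the Remark already in the text; for the reverse I would argue that if $K \subset A$ is affine and nonregular then $q := K^\infty$ satisfies $q \in \Rad(K)$, hence $q \perp K$; combined with $q \perp L$ (symplectic on $\hipa$, $q \in L$) this yields $q \perp K + L = A$, so $q \in A \cap (A^\infty)^\perp = \Hrd(A)$. Since $\Hrd(A) = \{p\}$ and $q \in \hipa$ whereas... the subtlety is that $q$ need not equal $p$ a priori since $p \notin \hipa$ but $q \in \hipa$; the resolution is that this shows $\Rad(K) \subseteq \Hrd(A) = \{p\}$, forcing $\Rad(K) = \emptyset$ after all unless $q = p$ --- but $q = p$ is impossible, so $K$ is in fact regular, contradiction. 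Hence no affine nonregular $K$ on $A$ can avoid $p$; i.e. every nonregular affine line on $A$ passes through $p$. Wait --- that argument seems to show there are \emph{no} nonregular affine lines not through $p$, which together with the Remark's ``lines through $p$ are nonregular'' gives exactly the claimed equivalence. So the obstacle is really just getting this chain of implications stated without circularity, and I would lay it out linearly as above, citing Lemma~\ref{lem:reg:line}, Lemma~\ref{lem:reg:plane}, and the Remark, and using that the symplectic form on $\hipa$ is alternating so every point of $\hipa$ is self-conjugate.
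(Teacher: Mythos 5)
Your case division and most of the individual claims line up with the paper's proof (which likewise treats the Fact as ``nearly evident'' except for the characterization of the nonregular lines on $A$). But the one direction that genuinely requires work --- in the case $L$ regular, that a nonregular affine line $K\subset A$ must pass through $p=\Hrd(A)$ --- is where your argument breaks. You write that $q:=K^\infty$ satisfies $q\perp K$ and then ``$q\perp L$ (symplectic on $\hipa$, $q\in L$)''. That second claim is false: in a symplectic geometry every point is selfconjugate, so $q\perp q$, but $q\perp L$ for $q\in L$ says precisely that $q\in L\cap L^\perp=\Rad(L)$, i.e.\ that $L$ is \emph{not} regular; in the case at hand $L$ is regular, so no point of $L$ is conjugate to all of $L$. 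The symptom is visible in your own text: your chain of implications never uses the hypothesis $p\notin K$, so if it were valid it would prove that \emph{every} affine line on $A$ is regular, contradicting the forward direction you had just established. You notice the tension but misdiagnose it.

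The repair is short. Either argue directly: from $q\perp K$ and $q\perp p$ (which holds because $p\in\Hrd(A)=A\cap L^\perp$ and $q\in L$), if $p\notin K$ then $q^\perp\supseteq K+p=A\supseteq L$, whence $q\in\Rad(L)=\emptyset$, a contradiction. Or do what the paper does: if $K$ is nonregular and misses $p$, then $K$ and $\LineOn(q,p)$ are two parallel nonregular lines on $A$ (the latter nonregular by the forward direction), and by the earlier Fact a plane containing two parallel nonregular lines is itself nonregular --- contradicting the regularity of $A$. Two smaller points: in the subcase $\Rad(A)=q$ your ``$K$ is nonregular exactly when $q\in\Rad(K)$'' also silently omits the converse, but there the needed ingredient $K^\infty\perp L$ \emph{is} available because $L$ is isotropic, so the computation $K^\infty\perp K+L=A$ forces $K^\infty\in\Rad(A)=q$ and closes it; and your reason that $p=\Hrd(A)$ is affine should be that $p\in L$ would give $p\in L\cap L^\perp=\Rad(L)=\emptyset$, not that $\Hrd(A)$ ``cannot contain $L$''.
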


\begin{proof}
  The claim is nearly evident. It only remains to prove the above characterization of 
  nonregular lines on $A$. Let $K$ be an affine line on $A$ and $x = K^\infty$.
  \par
  Let $A\perp L$; then $L \ni x \perp A \supset K$ and thus $K$ is not regular.
  \par
  Let $\Rad(A) = q$ and $L\perp L$; clearly, $q \in L$. Let $q \in K$; as above, 
  $q \perp A \supset K$ and thus $q \perp K$. Let $q \notin K$ and suppose that  $K$
  is not regular. Then $x \neq q$ and $x \perp L + K = A$, which gives 
  $A \perp q + x = L$. The obtained contradiction yields that $K$ is regular.
  \par
  Let $A$ be regular. From definition, $p \perp L$ and thus $p \perp x$; with $x \perp x$
  from $p \in K$ we obtain $x \perp K$, so $K$ is not regular.
  Assume that $K$ misses $p$ and suppose that $K$ is not regular.
  Then $K$ and $\LineOn(x,p)$ are two parallel nonregular lines on $A$
  and thus $A$ is not regular. This contradiction yields that $K$ must be regular.
\end{proof}
As a consequence we get
\begin{fact}\label{fct:plane:srodek}
  Let $A$ be an affine plane with\/ $\rdim(A)\leq 1$.
  In view of\/ {\upshape \ref{fct:classif:planes}} the nonregular lines on $A$ form a pencil.
  Its vertex will be denoted by $\srodek(A)$. This pencil is
  \begin{itemize}\def\labelitemi{\null}
  \item
    proper and\/ $\srodek(A)$ is an affine point when $\rdim(A)=0$, or
  \item
    parallel and\/ $\srodek(A)$ is a point on $A^\infty$ when $\rdim(A)=1$. 
  \end{itemize}
\end{fact}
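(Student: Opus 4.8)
The plan is to read off the statement of \ref{fct:plane:srodek} directly from the trichotomy established in \ref{fct:classif:planes}, splitting on the value of $\rdim(A)$ and invoking in each branch the already-proved description of which affine lines on $A$ are nonregular.

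First I would dispose of the case $\rdim(A)=0$, i.e.\ $A$ is regular. By \ref{fct:classif:planes} (the ``$L$ is regular'' branch, and note $\rdim(A)=0$ forces $L=A^\infty$ regular since a nonregular $L$ would make $A$ nonregular) there is an affine point $p\in A$ such that an affine line $K\subset A$ is nonregular iff $p\in K$. Hence the nonregular affine lines on $A$ are exactly the lines of $A$ through $p$ other than — well, every affine line through $p$; since $p$ is an affine point, the affine lines through $p$ together with the improper point $A^\infty$ constitute precisely the projective pencil $\penc(p,A)$ with vertex $p$, which is a proper pencil in $\fixaf$. So we set $\srodek(A):=p$, an affine point, and the pencil is proper, as claimed.

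Next the case $\rdim(A)=1$. Here \ref{fct:classif:planes} tells us $\Rad(A)$ is a point $q$ lying on $L=A^\infty$, and an affine line $K\subset A$ is nonregular iff $K^\infty=q$. Thus the nonregular affine lines on $A$ are exactly the affine lines of $A$ in the single direction $q$; these are mutually parallel, and together with the improper point $q$ they form a parallel pencil (a pencil in $\fixaf$ whose vertex is the point $q\in A^\infty$). We set $\srodek(A):=q$, a point on $A^\infty$, and the pencil is parallel. The remaining case of \ref{fct:classif:planes}, $\Rad(A)=L$, is excluded by the hypothesis $\rdim(A)\le 1$ (there $\rdim(A)=2$), so these two branches are exhaustive.

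There is essentially no obstacle here; the only point requiring a word of care is the bookkeeping that ``the nonregular affine lines on $A$ form a pencil'' really does mean a pencil in the sense defined for $\fixaf$ (resp.\ a parallel class, which in the projective-completion picture is a pencil through an improper point): in the $\rdim(A)=0$ case one must note that the vertex $p$ is an affine point of $A$ and that \emph{every} affine line of $A$ through $p$ is nonregular (not merely those through $p$ in some proper subset of directions), which is exactly the ``iff'' in \ref{fct:classif:planes}; in the $\rdim(A)=1$ case one must note symmetrically that every affine line of $A$ in the direction $q$ is nonregular. Both follow verbatim from \ref{fct:classif:planes}, so the proof is just a case split with a sentence in each branch, and well-definedness of $\srodek(A)$ is immediate since in each branch the vertex of the pencil is uniquely determined.
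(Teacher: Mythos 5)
Your proposal is correct and coincides with the paper's treatment: the paper gives no separate proof of this fact, presenting it as an immediate consequence of \ref{fct:classif:planes}, and your case split on $\rdim(A)$ is exactly the intended unpacking (regular $A$: nonregular lines are the affine lines through the affine point $\Hrd(A)$, a proper pencil; $\rdim(A)=1$: nonregular lines are those with improper point $\Rad(A)$, a parallel pencil; $\Rad(A)=A^\infty$ excluded by hypothesis). The only cosmetic slip is the phrase ``together with the improper point $A^\infty$'' in the $\rdim(A)=0$ branch --- the proper pencil is just the set of affine lines of $A$ through $p$, and $A^\infty$ is not one of them --- but this does not affect the argument.
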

\begin{lem}
  Let $M_1,M_2$ be two parallel regular lines in $\lines$ and let $A$ be the affine 
  plane that contains them. Then either $A$ is regular or $\Rad(A)$ is a point.
  In both cases $A$ contains a pair $K_1,K_2$ of regular lines which intersect
  in an affine point such that $K_i$ crosses $M_j$ in an affine point for all $i,j$.
\end{lem}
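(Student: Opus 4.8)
The plan is to handle the two assertions in turn. The dichotomy for $A$ comes directly from Fact~\ref{fct:reg:hip}, and the pair $K_1,K_2$ will be produced by a short counting argument inside the affine plane $A$, organised around the pencil of nonregular lines on $A$ supplied by Facts~\ref{fct:classif:planes} and~\ref{fct:plane:srodek}.

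First I would note that, since $M_1\parallel M_2$, the two lines share a single improper point $x:=M_1^\infty=M_2^\infty$, which lies on $\hipa$. A line is a hyperplane of a plane, so $M_1$ is a regular hyperplane of $A$, and Fact~\ref{fct:reg:hip} gives $\rdim(A)\le 1$; hence either $A$ is regular or $\Rad(A)$ is a point, which is the first claim.

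For the transversals I would use $\rdim(A)\le 1$ to invoke Fact~\ref{fct:plane:srodek}: the nonregular affine lines on $A$ form a single pencil with vertex $z:=\srodek(A)$, proper (with $z$ an affine point) when $A$ is regular and parallel (with $z$ on $A^\infty$) when $\Rad(A)$ is a point. The decisive observation is that $z\ne x$; indeed, if $z=x$ then $M_1$, whose improper point is $x=z$, would lie in that pencil and be nonregular. Then I would fix an affine point $r$ of $A$, choosing $r\ne z$ in the proper-pencil case. Through $r$ there pass as many lines of $A$ as a line of $\fixproj$ has points, hence at least six; of these exactly one is parallel to $M_1$ (the line through $r$ with improper point $x$) and at most one is nonregular (the member of the pencil through $r$), and by $z\ne x$ these two lines are distinct. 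So at least four lines of $A$ through $r$ are regular and not parallel to $M_1$; I would let $K_1,K_2$ be two distinct ones.

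It then remains to check the incidences, where I expect no genuine obstacle. The lines $K_1,K_2$ are distinct and both pass through $r$, so $K_1\cap K_2=\{r\}$ with $r$ affine. For each $i$ and $j$ the improper point of $K_i$ differs from $x=M_j^\infty$, so $K_i$ and $M_j$ are distinct and non-parallel, hence meet in one point of $A$; that point cannot lie on $A^\infty$, for otherwise it would be the shared improper point of $K_i$ and $M_j$ and would force $K_i^\infty=x$. The only points needing care are matching the proper- and parallel-pencil alternatives with the two outcomes of the dichotomy and recording that $\srodek(A)\ne x$; the counting needs only that a line of $\fixproj$ have more than three points, so it is valid over $\GF(4)$ as well (there it yields at least two, rather than four, admissible lines through $r$).
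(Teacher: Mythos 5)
Your proof is correct, but it reaches the pair $K_1,K_2$ by a different route than the paper. The paper constructs the transversals explicitly and sequentially: it fixes $a_1\in M_1$, takes the unique nonregular line $K_0$ of $A$ through $a_1$ (namely $\LineOn(a_1,{\srodek(A)})$), sets $y=M_2\cap K_0$, picks an affine $a_2\in M_2$ distinct from $y$ to get $K_1=\LineOn(a_1,a_2)$, and then a third affine point $b$ on $K_1$ to get $K_2=\LineOn(b,y)$; the common point of $K_1,K_2$ is $b$, found only after $K_1$ exists, and the two transversals do not pass through a pre-selected point. You instead fix one affine point $r$ first and count within the pencil of lines of $A$ through $r$, discarding the unique line parallel to $M_1$ and the unique nonregular line. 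Both arguments rest on the same structural input (\ref{fct:classif:planes}, \ref{fct:plane:srodek}, and \ref{fct:reg:hip} for the dichotomy, which the paper leaves implicit); yours is easier to verify (regularity of $K_1,K_2$ is immediate from the pencil description, whereas the paper leaves the regularity of its $K_2$ to the reader) and correctly tracks the field-size requirement, at the cost of invoking the cardinality hypothesis, which the explicit construction does not need. One parenthetical justification is inaccurate: when $\srodek(A)$ is an affine point, the nonregular line $\LineOn(r,{\srodek(A)})$ may coincide with the line through $r$ parallel to $M_1$ (this happens exactly when $r$ lies on the nonregular line whose improper point is $x$), so $\srodek(A)\neq x$ does not by itself make your two excluded lines distinct. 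The slip is harmless, since coincidence of the excluded lines only increases the number of admissible choices for $K_1,K_2$, but as stated the distinctness claim is not proved.
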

\begin{proof}
  Let $a_1$ be any point on $M_1$ and $K_0$ be the unique nonregular line through $a_1$
  (in above notation, either $K_0 = \LineOn(a_1,q)$ or $K_0 = \LineOn(a_1,p)$, resp.).
  Let $y = M_2 \cap K_0$ and $a_2$ be an affine point on $M_2$ distinct from $y$;
  take $K_1 = \LineOn(a_1,a_2)$.
  Let $b \in K_1$ be an affine point distinct from $a_1,a_2$ and $K_2 = \LineOn(b,y)$.
\end{proof}

\begin{cor}
  The formula
  \begin{multline}
    M_1 \parallel M_2 \iff (\exists{K_1,K_2})(\exists{p,a_1,a_2,b_1,b_2})
     \big[ K_1\neq K_2 \Land p \inc K_1,K_2
     \\
     a_1 \inc K_1,M_1 \Land a_2 \inc K_1,M_2 \Land b_1 \inc K_2,M_1 \Land b_2 \inc K_2,M_2
     \Land p \ninc M_1,M_2\big]
     \\
     \Land \neg\exists{a}[a \inc M_1,M_2]
  \end{multline}
  defines the parallelism of regular lines in terms of the geometry of\/ $\fixafr_1$.
\end{cor}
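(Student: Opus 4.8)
The plan is to verify that the displayed biconditional correctly captures parallelism, using only the incidence primitive $\inc$ available in the structure $\fixafr_1$ (whose points are $\regbhipy_1$ and whose lines are $\linesr$, i.e. regular affine lines not contained in $\hipa$, with $\inc$ being $\in$). The right-hand side has two conjuncts: the geometric ``quadrilateral'' condition asserting existence of a pair of regular lines $K_1,K_2$ meeting at a point $p$ off $M_1,M_2$ and each crossing both $M_i$, together with the ``no common point'' clause $\neg\exists a[a\inc M_1,M_2]$. First I would handle the easy direction: assume $M_1\parallel M_2$ (and $M_1\neq M_2$, since otherwise parallelism in an affine space is usually taken to mean distinct parallel lines, or else the claim is vacuous). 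Since $M_1,M_2$ are parallel regular lines, the preceding Lemma provides exactly a pair $K_1,K_2$ of regular lines crossing $M_1,M_2$ in affine points and meeting in an affine point $p$; that $p$ is not on $M_1$ or $M_2$ follows because $p$ lies on both $K_i$ and if $p\in M_1$ then $K_1$ and $M_1$ share two affine points hence coincide, forcing $M_1=M_2$. The clause $\neg\exists a[a\inc M_1,M_2]$ holds because parallel distinct affine lines have no common affine point, and the only points of $\fixafr_1$ are affine (regular) points — so even though $M_1^\infty = M_2^\infty$ in $\fixproj$, that shared improper point is not a point of the structure.

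Next I would do the converse. Suppose the right-hand side holds. From $K_1,K_2$ regular lines with common point $p$, and $K_i$ meeting $M_1$ in $a_i'$ and $M_2$ in $b_i'$ (affine points), with $p$ off $M_1,M_2$: the four points $a_1',a_2',b_1',b_2'$ together with $p$ lie in a common plane $A$ (spanned by $K_1$ and $K_2$, which meet). Then $M_1$ passes through $a_1'$ and $b_1'$, both in $A$, so $M_1\subset A$; likewise $M_2\subset A$. So $M_1,M_2$ are two lines of the affine plane $A$. By the Veblen–Young / Pasch configuration in an affine plane (or directly: two coplanar affine lines either meet in an affine point or are parallel), and since the second conjunct forbids a common point of $M_1,M_2$ in the structure, I must argue they cannot meet at an affine point either; but any affine intersection point would be a point of $\fixafr_1$ (it lies off $\hipa$), contradicting $\neg\exists a[a\inc M_1,M_2]$. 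Hence $M_1\parallel M_2$ in $\fixaf$.

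The main obstacle I anticipate is the bookkeeping about which points are ``available'' in $\fixafr_1$: the formula quantifies only over points of $\regbhipy_1$ and lines of $\linesr$, so I must be careful that (a) in the forward direction the witnesses produced by the Lemma genuinely lie in these sets — the Lemma does state the $K_i$ are regular lines in $\lines$ meeting things in affine points, and one should double-check the $K_i$ are not contained in $\hipa$ (true, since they contain affine points) and do not pass through $\biegun$ (this needs $\biegun\notin K_i$; if $\biegun\in\hipa$ this is automatic as the $K_i$ are affine and cross $\hipa$ only at an improper point which can be chosen $\neq\biegun$, and if $\biegun\notin\hipa$ then any line through $\biegun$ is nonregular by \ref{lem:regul3biegun}, so a regular $K_i$ avoids $\biegun$); and (b) in the converse direction, I am using the fact that distinctness of $K_1,K_2$ plus $p\inc K_1,K_2$ forces the $K_i$ to span a plane and to meet only at $p$, and that $a_1'\neq b_1'$ etc. so that $M_1$ is the unique line through two of these points. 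A secondary subtlety is whether one needs $M_1\neq M_2$ as a hypothesis or output; I would simply note that the formula as written, when $M_1=M_2$, still behaves correctly (the right side fails because no $p$ off $M_1=M_2$ can be joined to it by two lines crossing it in affine points unless those lines coincide), or alternatively state parallelism here to include the reflexive case and adjust one clause — but I expect the intended reading makes this harmless, and I would flag it rather than belabor it.
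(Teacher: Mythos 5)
Your argument is correct and follows the paper's intended route: the paper states this corollary without proof as an immediate consequence of the preceding lemma (forward direction from the lemma's construction, converse by coplanarity of $M_1,M_2$ inside $K_1+K_2$ plus the no-common-point clause), which is exactly what you do. The only nitpick is that your justification of $p\ninc M_1,M_2$ in the forward direction ("$K_1$ and $M_1$ would share two points") misses the degenerate case $p=K_1\cap M_1$; one should instead note that the lemma's explicit construction already yields $p=b\notin M_1\cup M_2$.
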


\begin{lem}\label{lem:regtriang}
  Let $A$ be an affine plane with\/ $\rdim(A)\leq 1$.
  Then $A$ contains a triangle $\Delta$ with regular sides
  and, moreover, through each affine point on $A$ distinct from $\srodek(A)$
  there passes a regular line
  which crosses the sides of $\Delta$ in at least two affine points.
\end{lem}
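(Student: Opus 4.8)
The plan is to build the triangle $\Delta$ by a greedy construction, starting from an arbitrary affine point and using Facts~\ref{fct:classif:planes} and~\ref{fct:plane:srodek} to control which lines are nonregular. Recall from~\ref{fct:plane:srodek} that, since $\rdim(A)\le 1$, the nonregular affine lines on $A$ form a single pencil with vertex $\srodek(A)$; every affine line on $A$ that avoids $\srodek(A)$ (if the pencil is proper) or is not parallel to that direction (if the pencil is parallel) is therefore regular. First I would pick an affine point $a_1$ on $A$ with $a_1\neq\srodek(A)$, then choose an affine point $a_2$ on $A$ so that the line $\LineOn(a_1,a_2)$ is regular --- this is possible because through $a_1$ there is at most one nonregular line, and since lines of $\fixproj$ have size at least $6$ (so affine lines have at least $5$ points and $A$ has plenty of points) there are many regular lines through $a_1$; moreover $a_2$ can be chosen to be an affine point, i.e. not on $A^\infty$, again because only one ``direction'' through $a_1$ is nonregular and $\srodek(A)$ accounts for the only possible bad improper point.

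Next I would choose a third vertex $a_3$ on $A$, an affine point, such that both $\LineOn(a_1,a_3)$ and $\LineOn(a_2,a_3)$ are regular. The constraint is that $a_3$ must avoid the (at most one) nonregular line through $a_1$, the (at most one) nonregular line through $a_2$, and also must not lie on $\LineOn(a_1,a_2)$ nor on $A^\infty$. Each of these is the deletion of a line's worth of points (or, for the $A^\infty$ condition, one line) from the plane $A$. Since an affine plane over a field with at least $5$ elements has at least $25$ points and each forbidden line contributes at most $|\text{line}|$ points, a counting argument shows such an $a_3$ exists; this is the place where the $\GF(2)$ and $\GF(4)$ exclusions matter, and I would note that for $\GF(4)$ the count is still comfortably satisfied (consistent with the remark after the field-size assumption). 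The three lines $\LineOn(a_1,a_2)$, $\LineOn(a_2,a_3)$, $\LineOn(a_1,a_3)$ then form the desired triangle $\Delta$ with all sides regular.

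For the ``moreover'' part, let $x$ be any affine point on $A$ with $x\neq\srodek(A)$. Through $x$ there is at most one nonregular affine line, and that line, even if it exists, is not parallel to more than one of the sides of $\Delta$ (since the sides pairwise meet). I would argue that among the regular affine lines through $x$ --- of which there are at least $|\text{line}|-1\ge 4$, counting also possibly one regular improper direction --- one can be chosen that meets at least two of the three sides of $\Delta$ in affine points: a regular line through $x$ misses a given side of $\Delta$ only if it is parallel to it (one forbidden direction per side, hence at most three forbidden directions) or meets it at an improper point (the side's unique improper point gives at most one more forbidden direction per side), so at most six directions through $x$ fail to cross a given side affinely, and a line crossing two of the three sides affinely is obtained by avoiding the at most $3+3$ bad directions for two chosen sides plus the single nonregular direction; for a pencil of size at least $5$ through $x$ this is a routine pigeonhole once one checks the bound, again using the field-size hypothesis. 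The main obstacle is precisely this bookkeeping: making the counting tight enough to survive the $\GF(4)$ case while keeping the argument uniform, so I would organize it as a single inequality ``(number of lines in a pencil) $>$ (total number of forbidden lines/directions)'' and verify it once.
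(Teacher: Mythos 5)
Your greedy construction of the triangle $\Delta$ is fine and amounts to what the paper treats as evident: since the nonregular affine lines on $A$ form a single pencil through $\srodek(A)$, a generic choice of three affine points gives a triangle with regular sides. The genuine gap is in the ``moreover'' part, and it is exactly the bookkeeping you flag as unresolved: the inequality you set up does not close. First, you count two forbidden directions per side --- ``parallel to the side'' and ``meets the side at an improper point'' --- but these are the same condition: a line through the affine point $x$ meets a side at an improper point precisely when it shares that side's improper point, i.e.\ when it is parallel to it. Second, even after removing this double count, the criterion ``crosses two prescribed sides affinely'' does not give what the lemma (and its use in \eqref{eq:fullplane}) requires, namely two \emph{distinct} affine crossing points: a line through a vertex $a_i$ meets both sides through $a_i$ at the single point $a_i$. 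As written, your tally of forbidden directions ($6$ or $7$) exceeds your estimate of the pencil size ($4$ or $5$), so the pigeonhole fails even over large fields.

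The count that actually works, and is the paper's argument, is this. A line $K$ through $x$ is parallel to at most one side. If it is parallel to no side it meets all three sides affinely, and at least two of these points are distinct since the three sides do not concur. If it is parallel to exactly one side it meets the other two affinely, and those two points coincide only when $K$ passes through their common vertex. Hence the only bad regular lines through $x$ are the lines $\LineOn(x,a_i)$ in the case where $\LineOn(x,a_i)$ is parallel to $\LineOn(a_j,a_l)$ --- at most three lines, and by the Fano axiom in characteristic $2$ the three parallels through the vertices to the opposite sides are concurrent, so this worst case genuinely occurs for one position of $x$ --- together with the single nonregular line $\LineOn(x,{\srodek(A)})$. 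That is at most four bad lines against at least five lines through $x$ contained in $A$, which is the inequality that closes the proof. Your write-up needs to be repaired along these lines; as it stands the key step is not established.
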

\begin{proof}
  In view of \ref{fct:plane:srodek} the nonregular lines on $A$ form a pencil
  with the vertex $\srodek(A)$:
  a proper one if $\srodek(A)$ is an affine point $p$ or a parallel one when $\srodek(A)$
  is a point $q$ on $A^\infty$. Thus the existence of a required
  triangle $\Delta$ is evident. Let $a_1,a_2,a_3$ be the vertices of $\Delta$
  and $x$ be an arbitrary affine point on $A$. The only lines 
  through $x$
  that may not cross appropriately the sides of $\Delta$ are the following
  $\LineOn(x,a_1) \parallel \LineOn(a_2,a_3)$
  $\LineOn(x,a_2) \parallel \LineOn(a_1,a_3)$,
  $\LineOn(x,a_3) \parallel \LineOn(a_1,a_2)$, and
  $\LineOn(x,{\srodek(A)})$. 
  (Note that from the Fano axiom valid in \fixaf, the lines
  through $a_i$ parallel to $\LineOn(a_j,a_l)$, $\set{i,j,l} = \set{1,2,3}$ 
  have indeed a common point $x$.)
  From assumptions, there are at least 5 lines through $x$ contained in $A$ and thus
  there exists a line required as well.
\end{proof}

In $\fixafr_1$ for a triangle $\Delta$ with the sides $L_1,L_2,L_3$ we define
\begin{multline}\label{eq:fullplane}
  \pi(\Delta) := \big\{
  x \colon (\exists{K})(\exists{a,b})\big[ x,a,b \inc K \Land a \neq b \Land
  \\
  \big((a \inc L_1 \land b \inc L_2)\lor (a \inc L_2 \land b \inc L_3)
  \lor (a\inc L_1 \land b \inc L_3)\big) \big] \big\}.
\end{multline}

Let $\planes$ be the set of planes of\/ \fixaf,   and let $\planes_i$ be the set
of planes in $\planes$ with $\rdim = i$. Then $\planesr := \planes_0
\cup\planes_1$ is the set of  planes with $\rdim\leq 1$.
For $A\in\planesr$ write $[A] := A\setminus\set{\srodek(A)}$.

\begin{cor}\label{cor:reconplanes}
  If $A\in\planes_1$, then $[A] = A$. If $A\in\planes_0$, then
  $[A]$ is an affine plane with one point deleted.
  Moreover, we have
  \begin{equation}
    \big\{[A]\colon A \in\planesr\big\} = 
    \big\{ \pi(\Delta)\colon \Delta \text{ is a triangle in }\fixafr_1\big\}.
  \end{equation}
\end{cor}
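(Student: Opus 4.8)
The plan is to prove the set equality in \ref{cor:reconplanes} by a double inclusion, after first dispatching the preliminary claims about $[A]$. For $A\in\planes_1$ we have $\rdim(A)=1$, so by \ref{fct:plane:srodek} the nonregular lines on $A$ form a \emph{parallel} pencil and $\srodek(A)$ is a point on $A^\infty=A\cap\hipa$; since $[A]=A\setminus\{\srodek(A)\}$ removes only an improper point, and $A$ as a plane of $\fixaf$ consists of its proper points only, indeed $[A]=A$. For $A\in\planes_0$ we have $\rdim(A)=0$, so again by \ref{fct:plane:srodek} the pencil is proper and $\srodek(A)$ is an affine point of $A$; thus $[A]$ is an affine plane with exactly one (affine) point deleted. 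This settles the first two sentences.

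For the main equality, first I would show $\supseteq$. Let $\Delta$ be a triangle in $\fixafr_1$ with regular sides $L_1,L_2,L_3$ and vertices $a_1,a_2,a_3$ (where $a_i=L_j\cap L_k$). The three sides, being regular lines not on $\hipa$, lie in a common affine plane $A$ (two of them meet in an affine point, and the third meets each of those), and one checks using \ref{fct:classif:planes} that $\rdim(A)\le 1$: if $A$ were totally radical or had its whole improper line as radical, all lines on $A$ would be nonregular, contradicting regularity of the $L_i$. So $A\in\planesr$ and $\srodek(A)$ is defined. I would then argue $\pi(\Delta)=[A]$: the definition \eqref{eq:fullplane} collects exactly those $x$ lying on a (regular) line $K$ meeting two distinct sides of $\Delta$ in distinct points; such a $K$ lies in $A$, so $\pi(\Delta)\subseteq A$, and $\srodek(A)\notin\pi(\Delta)$ because every line through $\srodek(A)$ meeting two sides of $\Delta$ in distinct points would have to be one of the nonregular lines (the pencil vertex sees each side in the pencil direction) — this is the place where \ref{lem:regtriang} does the real work, since it guarantees that through each affine point $x\neq\srodek(A)$ of $A$ there \emph{is} a regular line crossing the sides of $\Delta$ in at least two affine points, giving $[A]\subseteq\pi(\Delta)$. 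Conversely $\pi(\Delta)\subseteq[A]$ follows once one notes that a line $K$ realizing the membership condition for $x$ cannot pass through $\srodek(A)$ and must lie in $A$; here I would use the hypothesis that lines have at least $6$ points, so that a line $K$ meeting, say, $L_1$ and $L_2$ in distinct points $a,b$ has enough further points to be forced into $A=L_1+L_2$ and to have its generic point distinct from the pencil vertex.

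For the inclusion $\subseteq$, let $A\in\planesr$; I would invoke \ref{lem:regtriang} directly to produce a triangle $\Delta$ in $\fixafr_1$ with regular sides lying on $A$, and then the argument of the previous paragraph (run in the reverse direction) shows $\pi(\Delta)=[A]$, so $[A]$ is of the required form. The only subtlety is to confirm that the triangle produced by \ref{lem:regtriang} spans exactly $A$ and not some other plane — but its three vertices are affine points of $A$ and, being a genuine triangle, are noncollinear, hence span $A$; and $\srodek(A)$ is intrinsic to $A$, so $[A]$ is well-defined regardless of the choice of $\Delta$.

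The main obstacle I anticipate is the careful verification, in the $\supseteq$ direction, that $\pi(\Delta)$ cannot "leak" outside $[A]$ and in particular never contains $\srodek(A)$: one must rule out that some \emph{regular} chord $K$ of $\Delta$ passes through the pencil vertex. When $\srodek(A)$ is an affine point $p$ (case $\rdim(A)=0$), every line through $p$ on $A$ is nonregular by \ref{fct:classif:planes}, so no admissible $K$ exists — clean. When $\srodek(A)$ is an improper point $q$ (case $\rdim(A)=1$), the nonregular lines through $q$ form the parallel pencil, so again a chord of $\Delta$ through $q$ would be parallel to a side and nonregular; but here one must also be sure the improper point $q$ is genuinely excluded from $\pi(\Delta)$ as a "point", which it is since $\pi(\Delta)$ ranges over points $x$ incident to lines $K$ of $\fixafr_1$, all of which are affine. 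Checking these small cases against \ref{fct:classif:planes} and the size-$\ge 6$ hypothesis is where all the attention goes; everything else is bookkeeping.
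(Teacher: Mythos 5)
Your proposal is correct and follows exactly the route the paper intends: the corollary is stated there without an explicit proof, as an immediate consequence of \ref{fct:classif:planes}, \ref{fct:plane:srodek} and \ref{lem:regtriang}, and your double inclusion (sides of a triangle span a plane $A$ with $\rdim(A)\le 1$; regular chords stay in $A$ and avoid $\srodek(A)$; \ref{lem:regtriang} supplies the reverse inclusion) is the natural fleshing-out of that. One tiny remark: the size-$\ge 6$ hypothesis is not needed to force a chord $K$ into $A$ (two distinct points of $A$ already determine a line of $A$); it is only needed inside \ref{lem:regtriang} itself.
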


\begin{lem}\label{lem:planes:przez:line}
  Let $L$ be a nonregular affine line through an affine point $p$ in $\fixaf$.
  Then $q := L^\infty \neq\biegun$.
  Let $A\in \planesr$ contain $L$ and $M := A^\infty$, so $A = L + M$ with
  $q\in M\subset \hipa$.
  Then one of the following holds.
  \begin{sentences}\itemsep-2pt
  \item\label{planes:przez:line:1}
    $M\cap q^\perp = q$ ($M \not\subset q^\perp$). 
    In this case $M$ is regular, so
    $A$ is regular as well. Clearly, $\srodek(A)\in L$, so $M \subset {\srodek(A)}^\perp$.
    To have $p\neq \srodek(A)$ we need $M\not\subset p^\perp$.
  \item\label{planes:przez:line:2}
    $M\subset q^\perp$.
    In this case either $M\not\perp L$ or $\biegun$ is an affine point on $L$.
  \end{sentences}
\end{lem}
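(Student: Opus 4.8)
The plan is to settle the side claim $q\neq\biegun$, then record the decomposition $A=L+M$ together with the basic conjugacy $q\perp L$, and finally branch on whether $M\subseteq q^\perp$ or not.

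For $q\neq\biegun$: if $\biegun\in\hipa$ then \ref{lem:regul3biegun} says every affine line with direction $\biegun$ is regular, so the nonregular affine line $L$ cannot have $L^\infty=\biegun$; if $\biegun\notin\hipa$ it is immediate, since $q=L^\infty\in\hipa$ while $\biegun\notin\hipa$. For the set-up: $q\in L\subseteq A$ and $q\in\hipa$ give $q\in A\cap\hipa=A^\infty=M\subseteq\hipa$; since an affine line meets $\hipa$ in a single point, $L\cap M=\set{q}$, so $L+M$ is a plane contained in $A$, hence $A=L+M$, and every $m\in M\setminus\set{q}$ lies off $L$ with $A=L+m$. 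Finally, because $L$ is a nonregular affine line, \ref{lem:reg:line} gives $L\subseteq(L^\infty)^\perp=q^\perp$, i.e.\ $q\perp L$.

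In the case $M\not\subseteq q^\perp$ (so $M\cap q^\perp=\set{q}$, as $q\in M\cap q^\perp$): in the (possibly degenerate) symplectic geometry induced on $\hipa$, a line through $q$ not contained in $q^\perp$ is not totally isotropic, hence has empty radical, so $M$ is regular; by \ref{lem:reg:plane} the plane $A$ is then regular. I would next invoke \ref{fct:classif:planes} (the case ``$A^\infty$ regular'') and \ref{fct:plane:srodek}: the nonregular affine lines on $A$ form a proper pencil whose vertex $\srodek(A)$ is the affine point $\Hrd(A)=A\cap(A^\infty)^\perp=A\cap M^\perp$. Since $L$ is nonregular it lies in this pencil, so $\srodek(A)\in L$; and $\srodek(A)=\Hrd(A)\subseteq M^\perp$ reads $M\subseteq\srodek(A)^\perp$. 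Lastly, with $p\in L\subseteq A$, one has $p=\srodek(A)$ iff $p\in A\cap M^\perp$, i.e.\ iff $M\subseteq p^\perp$; hence $p\neq\srodek(A)$ exactly when $M\not\subseteq p^\perp$.

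In the case $M\subseteq q^\perp$: here $q\perp M$, and with $q\perp L$ this yields $q\perp L+M=A$, so $q\in\Rad(A)$; since $A\in\planesr$ forces $\rdim(A)\leq1$, we get $\Rad(A)=\set{q}$. To show $M\not\perp L$, suppose $M\perp L$ and pick $m\in M$ with $m\neq q$: then $m\perp L$ (as $m\in M$) and $m\perp m$ (as $m\in\hipa$), hence $m\perp L+m=A$, so $m\in\Rad(A)=\set{q}$ --- a contradiction. Thus $M\not\perp L$, which already gives the asserted disjunction (the alternative ``$\biegun$ is an affine point on $L$'' is in fact never needed once $\rdim(A)\leq1$). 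I expect the only delicate point to be the chain of invocations in the first case --- transferring regularity of the improper line $M$, read inside the symplectic geometry on $\hipa$, to regularity of $A$ in $\fixproj$ and then to the identification $\srodek(A)=\Hrd(A)$ --- which requires \ref{lem:reg:plane}, \ref{fct:classif:planes} and \ref{fct:plane:srodek} in the right order; the second case is a pure radical count with no real obstacle.
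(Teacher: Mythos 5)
Your proof is correct and, for case \eqref{planes:przez:line:2}, follows the same idea as the paper: since $M\perp M$ once $M\subset q^\perp$, any further relation $L\perp M$ would push $M$ into $\Rad(A)$, contradicting $\rdim(A)\le 1$. For case \eqref{planes:przez:line:1} the paper simply writes ``evident''; your chain \ref{lem:reg:plane} $\to$ \ref{fct:classif:planes} $\to$ \ref{fct:plane:srodek}, with $\srodek(A)=\Hrd(A)=A\cap M^\perp$, is exactly the intended filling-in and is sound. The one place you genuinely diverge is your remark that the disjunct ``$\biegun$ is an affine point on $L$'' is never needed: you are right that under the stated hypothesis $A\in\planesr$ the first disjunct $M\not\perp L$ always holds, so the statement as written is proved. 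The paper's own proof nevertheless pursues the hypothetical $L\perp M$ further, deriving $L^\perp=q^\perp\cap\hipa$ and hence $L=q+\biegun$; this step only carries information if one reads the lemma as classifying \emph{which} improper lines $M$ through $q$ yield a plane in $\planesr$ (i.e., quantifying over all candidates $M\subset q^\perp\cap\hipa$, whose span forces $L\perp q^\perp\cap\hipa$), which is how the lemma is actually deployed in \ref{lem:reg2collin} and \ref{thm:reg2af}. So your proof establishes the literal statement; just be aware that the extra disjunct is there to record the exceptional line $L=\LineOn(q,\biegun)$ for later use, not as a live alternative under the hypothesis you (correctly) exploited.
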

\begin{proof}
  Case \eqref{planes:przez:line:1} is evident.
  If $q\in M\subset q^\perp$, then $M\perp M$. To have $M\neq\Rad(A)$ we must 
  have $x\not\perp M$ for each $x\in A \setminus M$; this is equivalent to
  $L\not\perp M$. 
  Assume that $L\perp M$. Then $M=\Rad(A)$ and $L\perp q^\perp\cap\hipa$.
  Comparing dimensions we get $L^\perp = q^\perp\cap\hipa$
  which gives $L = q + \biegun$.
\end{proof}

\begin{lem}\label{lem:reg2collin}
  Let $L$ be a nonregular line and $a_1,a_2,a_3$ be affine points on $L$ in $\fixaf$.
  If\/ $\biegun\neq a_1,a_2,a_3$, 
  then there are distinct $A_1,A_2\in\planesr$ with $a_1,a_2,a_3 \in [A_1]\cap[A_2]$.
\end{lem}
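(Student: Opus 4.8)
The plan is to realise the two required planes as $A = L+M$, where $M$ ranges over suitable lines of $\hipa$ through the improper point $q := L^\infty$, and to control the centre $\srodek(A)$ so that it avoids $a_1,a_2,a_3$.

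First I would record the basic geometry. Since $L$ is nonregular, $\Rad(L) = q$, so $q\perp L$; moreover $q\neq\biegun$ (by \ref{lem:regul3biegun}, since otherwise $L$ would be an affine line with direction $\biegun\in\hipa$, hence regular). Every plane of $\fixaf$ containing $L$ has the form $A = L+M$ with $M$ a line of $\hipa$ through $q$, and then $A^\infty = M$; distinct $M$ give distinct planes, so it suffices to pick two good values of $M$.

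The key step is to restrict to lines $M$ that are \emph{regular} in the symplectic geometry induced on $\hipa$, which (as $q\in M$ and every nonregular line of $\hipa$ is totally isotropic) means exactly $M\not\subset q^\perp$. For such $M$, Lemma~\ref{lem:reg:plane} shows $A = L+M$ is regular, so $A\in\planes_0\subseteq\planesr$, and Facts~\ref{fct:classif:planes} and~\ref{fct:plane:srodek} give $\srodek(A) = \Hrd(A) = A\cap M^\perp$. I would then argue that this is a single affine point and, being the vertex of the pencil of nonregular affine lines on $A$, must lie on the nonregular line $L$; hence $\srodek(L+M) = L\cap M^\perp$. This yields the decisive equivalence: for an affine point $a$ on $L$, $\srodek(L+M) = a$ if and only if $M\subset a^\perp\cap\hipa$. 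I expect this identification of $\srodek(L+M)$ — in particular showing it is a single point and lies on $L$ rather than being larger or off $L$ — to be the main obstacle; it rests on combining Lemma~\ref{lem:reg:plane} with the classification in Fact~\ref{fct:classif:planes}.

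Finally I would run an elementary count. For each $i$, since $a_i$ is affine and $a_i\neq\biegun = \hipa^\perp$, the set $a_i^\perp\cap\hipa$ is a \emph{proper} hyperplane of $\hipa$ containing $q$ (here the hypothesis $a_i\neq\biegun$ is essential: for $a_i = \biegun$ this set would be all of $\hipa$ and every $M$ would be ``bad''). The regular lines through $q$ in $\hipa$ are those lines through $q$ not lying in the hyperplane $q^\perp\cap\hipa$; comparing their number with the number of lines through $q$ inside a fixed hyperplane of $\hipa$, and using that every line of $\fixproj$ has at least $6$ points, one sees that at least two regular lines $M_1\neq M_2$ through $q$ satisfy $M_j\not\subset a_i^\perp$ for all $i$. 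Setting $A_j := L+M_j\in\planesr$, these are distinct, $\srodek(A_j)\notin\{a_1,a_2,a_3\}$, and $a_1,a_2,a_3\in L\subset A_j$, whence $a_1,a_2,a_3\in [A_1]\cap[A_2]$, as required.
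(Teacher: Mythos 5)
Your argument is correct and follows essentially the same route as the paper: the paper's proof also reduces the problem (via Lemma~\ref{lem:planes:przez:line}, whose case of a regular $M$ you in effect re-derive from \ref{lem:reg:plane} and \ref{fct:classif:planes}) to finding two lines $M_1,M_2\subset\hipa$ through $q=L^\infty$ with $M_1,M_2\not\subset q^\perp,a_1^\perp,a_2^\perp,a_3^\perp$, and concludes by the same count using that a projective pencil contains at least $6$ lines. Your identification $\srodek(L+M)=L\cap M^\perp$ and the role of the hypothesis $a_i\neq\biegun$ match the paper's reasoning exactly.
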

\begin{proof}
  In view of \ref{lem:planes:przez:line} it suffices to find two lines 
  $M_1,M_2\subset\hipa$ through $q = L^\infty$ such that 
  $M_1,M_2 \not\subset q^\perp,a_1^\perp,a_2^\perp,a_3^\perp$.
  The required lines exist as we have at least 6 lines in a projective pencil.
\end{proof}

\begin{thm}\label{thm:reg2af}
  The affine space \fixaf\ is definable in terms of\/ $\fixafr_1$ and, 
  consequently, \fixproj\ is definable in terms of\/ $\fixafr_1$ as well.
\end{thm}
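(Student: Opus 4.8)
The plan is to recover, inside the geometry of $\fixafr_1 = \struct{\regbhipy_1,\linesr,\in}$, first the full point set of $\fixaf$ (i.e. $\reghipy_1 = $ the hyperplane complement, which differs from $\regbhipy_1$ only by the single point $\biegun$), then the collinearity relation of $\fixaf$ on that point set, and finally the line set $\lines$; since \fixproj\ is the projective closure of \fixaf, definability of \fixproj\ follows at once. The points of $\fixafr_1$ are already affine points, so the only point we must adjoin is $\biegun$ — and this only in the case $\biegun\in\hipa$, where $\biegun$ is an isolated point of $\fixgras_1$ (\ref{fct:isol1}); when $\biegun\notin\hipa$ we have $\biegun\in\reghipy_1$ already, so $\regbhipy_1 = \reghipy_1$ and nothing is missing.

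The key step is to define, for any two distinct affine points $a,b$, the affine line $\LineOn(a,b)$ as a \emph{set of points}, using only the regular‑line incidence of $\fixafr_1$. If $\LineOn(a,b)$ is itself regular this is immediate: it is a line of $\fixafr_1$ (provided it is not on $\hipa$, but two affine points span an affine line, so this is automatic). If $\LineOn(a,b)$ is nonregular, we reconstruct its point set via planes: by \ref{lem:reg2collin}, any three affine points $a_1,a_2,a_3$ on a nonregular affine line $L$ with $\biegun\notin\{a_1,a_2,a_3\}$ lie in the common part $[A_1]\cap[A_2]$ of two distinct punctured affine planes from $\planesr$, and by \ref{cor:reconplanes} the sets $[A]$, $A\in\planesr$, are exactly the sets $\pi(\Delta)$ of \eqref{eq:fullplane}, which are written purely in the incidence language of $\fixafr_1$. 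Thus ``$a,b,c$ are collinear in \fixaf'' becomes: either some regular line of $\fixafr_1$ contains all three, or there exist two distinct triangles $\Delta_1,\Delta_2$ with $a,b,c\in\pi(\Delta_1)\cap\pi(\Delta_2)$ (and $a,b,c$ not already collinear on a regular line) — this handles the nonregular affine lines. The point $\biegun$ (in the case $\biegun\in\hipa$) is then pinned down as the unique common ``direction'' of the regular affine lines that form a parallel class containing a line through it — more directly, $\biegun$ is recovered as follows: affine lines with direction $\biegun$ are exactly the regular lines all of whose translates are regular and which meet the nonregular line $L=\LineOn(p,q)$ with $q=L^\infty$ only in the point where the relevant plane degenerates; concretely one uses \ref{lem:regul3biegun} and \ref{lem:planes:przez:line}\eqref{planes:przez:line:2}, where the degenerate case $L = q+\biegun$ is distinguished inside the plane reconstruction, so $\biegun$ is the intersection point of two distinct reconstructed affine lines whose span is a nonregular plane $A$ with $\Rad(A)=A^\infty$ — a condition visible in $\fixafr_1$ because such $A$ contains \emph{no} regular line and hence no $\pi(\Delta)$ arises from it, so $\biegun$ is characterized as a point that is \emph{not} an affine point of $\fixafr_1$ yet lies on two reconstructed nonregular lines.

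With the point set of \fixaf\ and its ternary collinearity relation in hand, the line set $\lines$ is defined as the family of maximal collinear sets, and \fixaf\ — hence its projective closure \fixproj\ — is fully recovered; all the defining formulas are finite expressions in the primitive incidence $\mathord{\inc}=\mathord{\in}$ of $\fixafr_1$, as required by the methodological conventions of the paper.

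I expect the main obstacle to be the reconstruction of $\biegun$ in the case $\biegun\in\hipa$: unlike the other affine lines, lines \emph{through} $\biegun$ on \fixaf\ (affine lines with direction $\biegun$) are all regular by \ref{lem:regul3biegun}, but the planes through such a line that would let us ``see'' $\biegun$ as an intersection point are precisely the degenerate ones of \ref{lem:planes:przez:line}\eqref{planes:przez:line:2}, which carry no regular lines — so one must locate $\biegun$ indirectly, as the limit/intersection witness forced by \eqref{eq:fullplane} when a triangle $\Delta$ is chosen in a plane $A$ with $\rdim(A)=1$ and $\srodek(A)\notin A$... — here one should instead argue that $\biegun = \srodek(A)$ for a suitable affine plane $A$ with $\rdim(A)=1$, namely one spanned by a pair of parallel regular lines with direction $\biegun$, and that $\srodek(A)$ is definable as the unique point lying on every nonregular line of $A$, equivalently (by \eqref{eq:fullplane} and \ref{cor:reconplanes}) the unique point of $[A]$'s ``missing vertex'' in the parallel case; making this last identification rigorous, and checking it is field‑independent for ground fields larger than $\GF(4)$, is the delicate part.
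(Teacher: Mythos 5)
Your overall strategy coincides with the paper's: recover the nonregular affine lines as classes of a ternary collinearity relation witnessed by pairs of the punctured planes $\pi(\Delta)$ of \eqref{eq:fullplane} (via \ref{lem:reg2collin} and \ref{cor:reconplanes}), then patch in $\biegun$. However, you have the two cases for $\biegun$ exactly reversed, and this is not cosmetic. By definition $\regbhipy_1=\set{a\in\reghipy_1\colon a\neq\biegun}$, and \ref{fct:isol1} says that $\biegun$ is an isolated point of $\fixgras_1$ precisely when $\biegun\in\reghipy_1$, i.e.\ when $\biegun\notin\hipa$. So it is in the case $\biegun\notin\hipa$ that the point set of $\fixafr_1$ is missing the affine point $\biegun$; by \ref{lem:regul3biegun} every affine line through this $\biegun$ is then nonregular, so such lines reappear in the reconstruction only as punctured sets $L\setminus\set{\biegun}$, and the paper identifies them (and thereby $\biegun$) by the property that no reconstructed affine plane contains them: every plane of $\planesr$ through such a line has $\srodek(A)=\biegun$, hence is itself seen only as $[A]=A\setminus\set{\biegun}$. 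Conversely, when $\biegun\in\hipa$ one has $\regbhipy_1=\reghipy_1$, the classes of the collinearity relation are already the full nonregular affine lines, nothing is missing from the affine point set, and $\biegun$, being improper, is recovered together with all other improper points by the ordinary projective closure of \fixaf; there is no separate difficulty there.

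Because of this swap, the portion you flag as ``the delicate part'' (locating $\biegun$ as $\srodek(A)$ for a plane spanned by parallel regular lines with direction $\biegun$, under the hypothesis $\biegun\in\hipa$) addresses a non-problem, while the case that genuinely needs an argument --- $\biegun$ an affine point lying on no regular line --- is touched only obliquely in your remark that $\biegun$ is ``a point not of $\fixafr_1$ lying on two reconstructed nonregular lines,'' which is the right instinct but is attached to the wrong hypothesis and not carried out. A secondary slip: in your collinearity formula you should require two triangles spanning \emph{distinct} planes, i.e.\ $\pi(\Delta_1)\neq\pi(\Delta_2)$, not merely $\Delta_1\neq\Delta_2$, since two triangles in one plane $A$ would make any three points of $[A]$ ``collinear.'' Straighten out the case split, add the criterion for which collinearity classes must have $\biegun$ adjoined, and the proof closes along the paper's lines.
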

\begin{proof}
  Let $\wspolin_0$ be the ternary collinearity relation 
  on the set of affine points distinct from $\biegun$ on nonregular lines.
  By \ref{lem:reg2collin} this relation
  is definable in terms of the geometry of $\fixafr_1$.
  On the other hand $\wspolin_0$ is a ternary equivalence relation 
  in the sense of \cite[\S 4.5]{bachman}.   
  So, let $\lines_0$ be the family of equivalence classes of $\wspolin_0$.
  If $\biegun$ is an improper point we are through, as $\lines_0$ consists of 
  the affine nonregular lines.
  So, assume that $\biegun$ is an affine point. 
  Then $\lines_0$ consists of the nonregular lines not through $\biegun$ and the sets
  $L\setminus\set{\biegun}$ where $L$ is nonregular through $\biegun$.
  For every triangle $\Delta$ of $\fixafr_1$ we set
  \begin{cmath}
    \pi'(\Delta) = 
    \bigcup\big\{ L\in\linesr\cup\lines_0\colon |\pi(\Delta)\cap L|\geq 2\big\}.
  \end{cmath}
  Let $L\in\lines_0$ and $L'$ be the nonregular line of $\fixaf$ that contains $L$.
  Either $\biegun\in L'$ and then every plane containing $L$
  also contains $\biegun$, or $\biegun\notin L'$ and then there is exactly one plane
  $L+\biegun$ containing $L$ and $\biegun$.
  Therefore,
  if no plane through a line $L$ is an affine plane, then $L\in\lines_0$ and we set 
  $L' := L\cup\{\biegun\}$; otherwise we set $L' := L$.
  After that
  $\lines'_0:=\set{L'\colon L\in\lines_0}$ is the set of all nonregular lines of \fixaf\
  and $\lines = \linesr \cup\lines'_0$.
\end{proof}

For a set $X$ of affine points we write $\overline{X}$
for the least subspace of \fixproj \ which contains $X$.

\begin{lem}\label{lem:reg2horiz}
  Let $q \in\hipa$. Then the set
  \begin{equation}
    [q] := \{ a\colon a \text{ is a point of\/ }\fixaf,\;\LineOn(a,q)\notin\linesr \}
  \end{equation}
  is the set of affine points on $q^\perp$, and thus
  $\overline{[q]} = q^\perp$.
\end{lem}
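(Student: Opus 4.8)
The plan is to show that the two sets coincide pointwise, and then read off the conclusion about subspace-closures. First I would unwind the definition: an affine point $a$ lies in $[q]$ exactly when the affine line $\LineOn(a,q)$ is \emph{not} a member of $\linesr$, i.e. it is not a regular line of $\fixaf$. The line $\LineOn(a,q)$ is an affine line (it is not contained in $\hipa$, since $a$ is an affine point) with improper point $q = \LineOn(a,q)^\infty$. So I can invoke \ref{lem:reg:line}: $\LineOn(a,q)$ is regular iff it is not contained in the hyperplane $(\LineOn(a,q)^\infty)^\perp = q^\perp$. Hence $\LineOn(a,q)\notin\linesr$ iff $\LineOn(a,q)\subset q^\perp$, which (since $q\in q^\perp$ always holds, as $q\in\hipa$ is selfconjugate) is equivalent to $a\in q^\perp$. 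This gives the first assertion: $[q]$ is precisely the set of affine points on $q^\perp$.

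For the closure statement $\overline{[q]} = q^\perp$, I would argue that $q^\perp$ is a hyperplane of $\fixproj$ not contained in $\hipa$: indeed $q\in\hipa$, and since $\biegun$ is the pole of $\hipa$ with $\biegun\notin\hipa$ (or, in the case $\biegun\in\hipa$, by the explicit form of $\perp$ in Subsection~\ref{subsubsec:b0H} one checks $q^\perp\not\subseteq\hipa$ for $q\neq\biegun$; and for $q=\biegun$ we have $\biegun^\perp=\hipa$ — this is the one point that needs a separate word, see below). A hyperplane of $\fixproj$ that is not contained in $\hipa$ is the projective closure of its own affine part (its affine part is a hyperplane of the affine space $\fixaf$, hence spans it together with the improper points it carries), so $\overline{[q]} = \overline{q^\perp\setminus\hipa} = q^\perp$.

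The one point requiring care is $q = \biegun$ in the case $\biegun\in\hipa$: then $q^\perp = \hipa$ contains no affine point, so $[q] = \emptyset$ and $\overline{[q]} = \emptyset \neq q^\perp$. So the lemma is implicitly restricted to $q\neq\biegun$ when $\biegun\in\hipa$; I would note this (or the statement tacitly excludes it, consistent with the running conventions of the section where $\biegun$ has been removed as an isolated object). This is the main — and only — obstacle; everything else is a direct application of \ref{lem:reg:line} together with the elementary fact that an affine hyperplane determines its projective closure. I would therefore present the argument as: (1) the pointwise identity $[q] = \{a \text{ affine} : a\in q^\perp\}$ via \ref{lem:reg:line}; (2) for $q\neq\biegun$, $q^\perp$ is an affine-spanning hyperplane, so $\overline{[q]}=q^\perp$.
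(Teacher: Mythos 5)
The paper states this lemma without any proof, so there is no argument to compare against line by line; your derivation via \ref{lem:reg:line} is exactly the intended one and is correct: $\LineOn(a,q)$ is an affine line with improper point $q$, and since $q\perp q$ (as $q\in\hipa$), its nonregularity is equivalent to $a\in q^\perp$, while for $q\neq\biegun$ the hyperplane $q^\perp\neq\hipa$ is spanned by its affine part. Your caveat about $q=\biegun$ when $\biegun\in\hipa$ is also a genuine observation rather than a flaw in your proof: there $q^\perp=\hipa$, so $[q]=\emptyset$ and $\overline{[q]}\neq q^\perp$, an exceptional case the paper's statement silently ignores (harmlessly for its later uses, which only invoke the family $\{[q]\colon q\in\hipa\}$).
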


Similarly, if $a$ is a point of \fixaf\ then 
the set 
$[a] = \big\{ K^\perp\colon a \inc K,\; K\in\lines\setminus\linesr \big\}$
coincides with the set $a^\perp \cap\hipa$; but not with the set $a^\perp$,
unhappily.

\begin{thm}\label{thm:not:reg2perp:1}
  The metric projective space $(\fixproj,\perp)$ is not definable in terms of
  the geometry of\/ $\fixafr_1$.
\end{thm}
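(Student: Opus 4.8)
The plan is to exhibit two essentially different polarities on the same projective space that induce \emph{exactly the same} regular point-line geometry $\fixafr_1$, which shows that $\perp$ cannot be recovered from $\fixafr_1$ alone. First I would recall that, by \ref{thm:reg2af}, the underlying projective space \fixproj\ (together with the distinguished hyperplane $\hipa$ and, when $\biegun$ is affine, the distinguished affine point $\biegun$) \emph{is} reconstructible from $\fixafr_1$; so what is genuinely at stake is whether the collineation data $\bigl(\fixproj,\hipa,\biegun\bigr)$ already determines the conjugacy $\perp$. It does not, and the reason is that the restriction $\perp_\hipa$ is a (possibly degenerate) symplectic polarity on $\hipa$, and by \ref{lem:reg:line} and \ref{cor:regularneWpeku} the regularity of an affine line $L$ through a regular point $p$ depends on $\perp$ \emph{only} through the hyperplane $p^\perp\cap\hipa$ of $\hipa$, i.e.\ only through $\perp_\hipa$ and the correspondence $p\mapsto (p^\perp\cap\hipa)$. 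So two polarities giving the same $\hipa$, the same $\biegun$, and the same induced symplectic form on $\hipa$ (up to the appropriate equivalence) yield literally the same set $\linesr$ of regular lines with the same incidence, hence isomorphic $\fixafr_1$.

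The concrete step is to write down such a pair. Using the normal forms from Section~\ref{sec:pseudo-polarity}: in the case $\biegun\notin\hipa$ the form is $\xi = x_0y_0 + \sum_{i=1}^k (x_{2i-1}y_{2i}+x_{2i}y_{2i-1})$, and I would replace it by $\xi' = \lambda x_0y_0 + \sum_{i=1}^k (x_{2i-1}y_{2i}+x_{2i}y_{2i-1})$ for a scalar $\lambda\in\goth F\setminus\{0,1\}$ (such $\lambda$ exists since lines of \fixproj\ have size $\ge 6$, so $|\goth F|\ge 4$). Both forms have $\hipa\colon x_0=0$ and pole $\biegun=[1,0,\dots,0]$, and the induced form on $\hipa$ is the \emph{same} symplectic form in both cases. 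Crucially, for $p=[p_0,\dots,p_{2k}]$ with $p_0\neq 0$ the hyperplane $p^{\perp}\cap\hipa$ and $p^{\perp'}\cap\hipa$ coincide: inside $\hipa$ the condition is $\sum_{i=1}^k(p_{2i-1}y_{2i}+p_{2i}y_{2i-1})=0$, which involves neither $p_0$ nor $\lambda$. Hence by \ref{cor:regularneWpeku} a line through a regular point is $\perp$-regular iff it is $\perp'$-regular, so $\reghipy_2$ is unchanged, and \ref{fct:isol1} tells us the isolated objects are unchanged too; therefore $\fixafr_1$ is literally the same structure for $\perp$ and $\perp'$. The analogous substitution $\lambda x_0 y_0 + \sum(x_{2i}y_{2i+1}+x_{2i+1}y_{2i})$ handles the case $\biegun\in\hipa$, where again $\hipa$, $\biegun$, and $\perp_\hipa$ are untouched and, for $p\notin\hipa$, $p^\perp\cap\hipa$ does not see $\lambda$.

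Then I would argue that $\perp\neq\perp'$ as polarities of \fixproj: they are equal iff $\xi$ and $\xi'$ are proportional as forms, i.e.\ iff there is $\mu$ with $\mu\lambda = 1$ and $\mu = 1$ coming from the unchanged $\nabla$-blocks, forcing $\lambda=1$, contrary to the choice of $\lambda$. (Equivalently: the point $\biegun$ has the same polar hyperplane $\hipa$ under both, but there is an affine point $p$ with $p^\perp\neq p^{\perp'}$ as hyperplanes of \fixproj, even though their traces on $\hipa$ agree — they differ precisely in whether they pass through $\biegun$, since $\perp$ and $\perp'$ differ on the $x_0$-coordinate.) Finally, since an isomorphism of the metric projective spaces $(\fixproj,\perp)\to(\fixproj,\perp')$ would be a collineation of \fixproj\ conjugating $\perp$ to $\perp'$, and any definability of $\perp$ in $\fixafr_1$ would make $\perp$ invariant under all automorphisms of $\fixafr_1$ — in particular the identity map of \fixproj, which is an isomorphism $\fixafr_1(\perp)\to\fixafr_1(\perp')$ — we would get $\perp=\perp'$, a contradiction. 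The one point demanding care is making sure the \emph{whole} structure $\fixafr_1$, not just the set of regular lines, is preserved: one must check that the point set $\regbhipy_1$ (the complement of $\hipa$, minus $\biegun$ if $\biegun$ is affine) and the incidence are the same, which is immediate since these depend only on $\fixproj,\hipa,\biegun$, all of which are common to $\perp$ and $\perp'$; so the main obstacle is really just bookkeeping to confirm that the chosen perturbation changes $\perp$ globally while leaving every ingredient that enters the definition of $\fixafr_1$ fixed.
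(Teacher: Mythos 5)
Your proof is correct and takes essentially the same route as the paper's: in both cases the perturbation is exactly the paper's one (rescaling $\xi(e_0,e_0)$, resp.\ $\xi(\omega,\omega)$, i.e.\ the coefficient attached to the affine direction), one checks that $\hipa$, $\biegun$, $\perp_{\hipa}$ and $p^\perp\cap\hipa$ for affine $p$ are untouched so that $\fixafr_1$ is literally unchanged, and then one exhibits a conjugate pair of affine points that becomes non-conjugate. The only slip is the parenthetical claim that $p^\perp$ and $p^{\perp'}$ ``differ precisely in whether they pass through $\biegun$'' (for affine $p$ with $\biegun\notin\hipa$ neither hyperplane contains $\biegun$), but this is harmless since your primary argument via non-proportionality of the forms already gives $\perp\neq\perp'$.
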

\begin{proof}
  Let $W$ be the subspace of $\field V$ with
    $\hipa = \set{\gen{h}\colon h\in W,h\neq \theta}$, 
  and 
  $\xi$ be the form defined on $\field V$ which determines the conjugacy $\perp$.
  Write $\biegun=\gen{e_0}$ for a vector $e_0$.
  There are two cases to consider.
  \begin{sentences}\itemindent13ex
  \item[$\biegun\notin\hipa$:]
    Let $\xi_0$ be the restriction of $\xi$ to $W$.
    Then  $\xi_0$ is a nondegenerate symplectic form. 
    Write $\varepsilon = \xi(e_0,e_0)$.
    Let $e_1,\dots,e_n$ be a basis of $\hipa$; then the family
    ${\mathscr E} = \left( e_i\colon i=0,\dots,n \right)$
    is a basis of $\field V$.
    We have $e_0 \perp h$ for every $h\in\hipa$ and thus
    the formula defining the form $\xi$ is the following
    \begin{equation}\label{equ:form1}
      \xi(h_1 + \alpha_1 e_0,h_2 + \alpha_2 e_0) = \xi_0(h_1,h_2) + \alpha_1 \alpha_2 \varepsilon,
    \end{equation}
    where $h_1,h_2 \in W$ and $\alpha_1,\alpha_2$ are scalars of the coordinate field.
    Note that, conversely, for every nondegenerate symplectic form  $\xi_0$ defined on $W$
    and every scalar $\varepsilon\neq 0$ the formula \eqref{equ:form1}
    defines a nondegenerate bilinear form $\xi = \xi_\varepsilon$. 
    Indeed, if $M$ is the matrix of $\xi_0$ in the given basis then
    \begin{cmath}
      M_\varepsilon = 
        \begin{bmatrix} 
          \varepsilon & 0 & \cdots & 0 \\
          0 \\
          \vdots      &   & M \\ 
          0
        \end{bmatrix}
    \end{cmath}
    is the matrix of $\xi_\varepsilon$ and $\det(M_\varepsilon)\neq 0$.
    In particular, for $h,h_1\in W$ and a scalar $\alpha$ we have
    \begin{cmath}
      \xi_\varepsilon(h_1,h+ \alpha e_0) = \xi_0(h_1,h)
      \qquad\text{and}\qquad
      \xi_\varepsilon(h+\alpha e, h + \alpha e_0) = \alpha^2 \varepsilon.
    \end{cmath}
    Let us write $\perp_\varepsilon$ for the conjugacy determined by $\xi_\varepsilon$
    and ${\fixafr}_{\varepsilon}$ for the induced structure of regular points and 
    lines wrt. the conjugacy $\perp_{\varepsilon}$.
    Let $\varepsilon_1,\varepsilon_2$ be any two nonzero scalars. 
    Then the following holds
    \begin{eqnarray}\label{equ:form1a}
      q' \perp_{\varepsilon_1} q'' & \iff & q' \perp_{\varepsilon_2} q'',
      \\ \label{equ:form1b}
      q' \perp_{\varepsilon_1} a & \iff & q' \perp_{\varepsilon_2} a,
    \end{eqnarray}
    for all $q',q''\in\hipa$, $a\notin\hipa$.
    From \eqref{equ:form1}, \eqref{equ:form1a} and \eqref{equ:form1b} we derive that 
    the set of points selfconjugate under $\xi_{\varepsilon_i}$ is $\hipa$,
    and a line of \fixaf\ is regular under $\perp_{\varepsilon_1}$
    iff it is regular under $\perp_{\varepsilon_2}$.
    This yields $\fixafr_{\varepsilon_1} = \fixafr_{\varepsilon_2}$.
    
    Let us take any $h_1,h_2\in W$ with $\xi_0(h_1,h_2)\neq 0$.
    Set $\varepsilon_1 = \xi_0(h_1,h_2)$, $a_i = \gen{h_i + e_0}$ for $i=1,2$,
    and let $\varepsilon_2$ be a nonzero scalar $\neq \varepsilon_1$.
    From \eqref{equ:form1} we directly compute that
    $a_1 \perp_{\varepsilon_1} a_2$ and $a_1 \not\perp_{\varepsilon_2} a_2$.
    This yields that $\perp_{\varepsilon_i}$ cannot be defined in terms of 
    $\fixafr_{\varepsilon_i}$.
  \item[$\biegun\in\hipa$:]
    Let $\omega\notin W$. 
    The form $\xi\restriction{W}$ is a  degenerate symplectic form. 
    Set $Y = \omega^\perp \cap W$, then $e_0,\omega\notin Y$.
    Let $e_1,\dots,e_{n-1}$ be a basis of $Y$; then
    $\left( e_0,\dots,e_{n-1} \right)$ is a basis of $W$ and 
    $\left( \omega,e_0,\dots,e_{n-1} \right)$ is a basis of $\field V$.
    Let $y \in Y$, then
    \begin{cmath}
      \xi(y,\omega) = 0,\qquad \xi(y,e_0) = 0, \qquad\text{and}\qquad \xi(e_0,e_0) = 0.
    \end{cmath}
    Take any two vectors $y_i + \alpha_i e_0 + \beta_i\omega$, 
    ($y_i \in Y$, $i=1,2$) of $\field V$.
    We have
    \begin{multline}\label{equ:form2}
      \xi(y_1 + \alpha_1 e_0 + \beta_1\omega,y_2 + \alpha_2 e_0 + \beta_2\omega)
      = \xi_0(y_1,y_2) + (\alpha_1\beta_2 + \alpha_2\beta_1)\lambda 
      + \beta_1\beta_2\mu,
    \end{multline}
    where 
    $\lambda = \xi(e_0,\omega)\neq 0$, $\mu = \xi(\omega,\omega)\neq 0$, and
    $\xi_0$ is the restriction of $\xi$ to $Y$.
    
    For any scalars $\lambda,\mu\neq 0$ let $\xi_{\mu,\lambda}$ be a bilinear form
    defined on $\field V$ by formula \eqref{equ:form2}.
    Let $M$ be the matrix of $\xi_0$ in the given basis.
    Note that $\xi_0$ is a nondegenerate symplectic form.
    Then
    \begin{cmath}
      M_{\mu,\lambda} = 
      \begin{bmatrix} 
        \mu     & \lambda   & 0     & \cdots    & 0 \\
        \lambda & 0         & 0     & \cdots    & 0 \\
        0       & 0 \\
        \vdots  & \vdots    &       & M \\
        0       & 0
      \end{bmatrix}
    \end{cmath}
    is the matrix of $\xi_{\mu,\lambda}$ in our basis.
    Clearly, $\det(M_{\mu\lambda}) \neq 0$, so $\xi_{\mu,\lambda}$ is nondegenerate.
    Let $\perp_{\mu,\lambda}$ be the conjugacy determined by the form $\xi_{\mu,\lambda}$
    and $\fixafr_{\mu,\lambda}$ be the induced structure of regular points and lines.
    
    Since $\xi_{\mu,\lambda}(\omega,\omega)\neq 0$, the form $\xi_{\mu,\lambda}$
    is not symplectic. Easy computation gives that 
    $\xi_{\mu,\lambda}(y + \alpha e_0,y + \alpha e_0) = 0$
    for each $y \in Y$ and each scalar $\alpha$.
    Consequently, $\hipa$ is the set of points selfconjugate under $\perp_{\mu,\lambda}$.
    From \eqref{equ:form2} we compute
    \begin{ctext}
      $\xi_{\mu,\lambda}(y+\alpha e_0, y_1 + \alpha_1 e_0) = \xi_0(y,y_1)$
      and
      $\xi_{\mu,\lambda}(y_1 + \alpha_1 e_0,y+\alpha e_0 + \beta\omega) = 
      \xi_0(y_1,y) + \lambda \alpha_1\beta$
    \end{ctext}
    for all $y,y_1 \in Y$ and scalars $\alpha,\alpha_1,\beta$.
    This yields that for any nonzero scalars $\lambda,\lambda_1,\mu,\mu_1$ 
    and $q',q''\in\hipa$, $a \notin\hipa$
    we have
    \begin{eqnarray}
      q' \perp_{\mu,\lambda} q'' & \iff & q' \perp_{\mu_1,\lambda_1} q'',
      \\
      q' \perp_{\mu,\lambda} a  & \iff & q' \perp_{\mu_1,\lambda} a.
    \end{eqnarray}

    Let us take any $y_1,y_2 \in Y$ with $\xi_0(y_1,y_2)\neq 0$ and let 
    $\mu_1 = \xi_0(y_1,y_2)$. Write $a_i = \gen{y_i + \omega}$.
    Finally, let $\mu_2\neq 0,\mu_1$ be a scalar.
    Then $\fixafr_{\mu_1,\lambda} = \fixafr_{\mu_2,\lambda}$,
    $a_1 \perp_{\mu_1,\lambda} a_2$, and $a_1 \not\perp_{\mu_2,\lambda} a_2$.
    Consequently, 
    $\perp_{\mu_i,\lambda}$ cannot be defined in $\fixafr_{\mu_i,\lambda}$.
  \end{sentences}
  Since in any case, the conjugacy $\perp$ cannot be defined in terms
  of the geometry of $\fixafr_1$, our proof is complete.
\end{proof}


Gathering together \ref{thm:reg2af} and \ref{lem:reg2horiz} we conclude with
\begin{cor}\label{cor:reg2co-sie-da}
  The structure of the form
  \begin{multline}\nonumber
    \big\langle \text{points of }\fixaf,
      \text{lines of\/ }\fixaf,
      \text{points of\/ }\fixproj,
      \text{lines of\/ }\fixproj,
      \hipa\ (=\text{the horizon of\/ }\fixaf),
      \perp_{\hipa},
      \\
      \mathord{\perp}\cap(\reghipy_1\times\hipa)\big\rangle
  \end{multline}
  is definable in terms of the structure $\fixafr_1$.
\end{cor}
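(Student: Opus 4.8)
The statement is a "packaging" corollary: everything listed in the displayed tuple has already been shown to be recoverable from $\fixafr_1$, so the work is to assemble the pieces and to verify that the last, slightly exotic coordinate $\mathord{\perp}\cap(\reghipy_1\times\hipa)$ is covered. My plan is to produce, one by one, a formula in the language of $\fixafr_1$ (with primitive notion $\inc$ on $\regbhipy_1\times\linesr$) that defines each component of the tuple, citing the results already established.

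\emph{Points and lines of $\fixaf$, and the horizon $\hipa$.} By Theorem \ref{thm:reg2af} the whole affine space $\fixaf=\struct{\reghipy_1,\lines}$ is definable in terms of $\fixafr_1$: the point set $\reghipy_1$ is recovered (it is $\regbhipy_1$ together with the one isolated point $\biegun$, which in the affine case is pinned down inside the proof of \ref{thm:reg2af} via the classes of $\wspolin_0$), and the full lineset $\lines=\linesr\cup\lines'_0$ is reconstructed there. Since $\fixaf$ is an affine space, its projective completion is canonical; hence the points and lines of $\fixproj$ and the horizon hyperplane $\hipa$ (the set of improper points of $\fixaf$, i.e. points of $\fixproj$ not in $\reghipy_1$) are all definable. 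This already gives the first five coordinates and is really just a restatement of \ref{thm:reg2af}.

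\emph{The two restrictions of $\perp$.} For $\perp_{\hipa}$: by Lemma \ref{lem:reg2horiz}, for each $q\in\hipa$ the set $[q]$ of affine points $a$ with $\LineOn(a,q)\notin\linesr$ is exactly the affine part of $q^\perp$, so $q^\perp=\overline{[q]}$ is definable, and two improper points $q',q''$ are $\perp$-conjugate iff $q''\in\overline{[q']}$; thus $\perp_{\hipa}$ (a relation on $\hipa\times\hipa$) is definable. For the mixed relation $\mathord{\perp}\cap(\reghipy_1\times\hipa)$: given an affine point $a$ and an improper point $q$, one has $a\perp q$ iff $a\in q^\perp=\overline{[q]}$, i.e. $a\perp q\iff a\in\overline{[q]}$ — again a formula in the already-reconstructed affine/projective incidence geometry. (Equivalently one may use the remark after \ref{lem:reg2horiz}: for a point $a$ of $\fixaf$, $a^\perp\cap\hipa=\set{K^\perp\colon a\inc K,\ K\in\lines\setminus\linesr}$, which is the same relation read the other way.) Assembling these definitions inside one structure gives the required interpretation of the whole tuple in $\fixafr_1$.

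\emph{Main obstacle.} None of the individual steps is deep once \ref{thm:reg2af} and \ref{lem:reg2horiz} are in hand; the only thing to be careful about is \emph{uniformity and closure of the vocabulary}. One must check that the projective completion of $\fixaf$ is genuinely first-order definable from $\fixaf$ (standard: improper points are parallel-classes of affine lines, which is an equivalence relation definable from incidence), and that "$\overline{X}$" as used in \ref{lem:reg2horiz} — the affine/projective span — is available as a definable operation in the reconstructed geometry rather than something imported from outside. Spelling that out is the mildly technical point; everything else is bookkeeping, and the corollary follows by conjoining the formulas produced above.
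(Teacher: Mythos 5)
Your proposal is correct and follows exactly the route the paper intends: the paper gives no explicit proof, merely introducing the corollary with ``Gathering together \ref{thm:reg2af} and \ref{lem:reg2horiz} we conclude with\dots'', and your argument is precisely that assembly (with the mixed relation $\mathord{\perp}\cap(\reghipy_1\times\hipa)$ read off from $a\perp q\iff a\in\overline{[q]}$, which is what \ref{lem:reg2horiz} provides). Your additional remarks on the definability of the projective completion and of the span operation are sensible bookkeeping that the paper leaves implicit.
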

Finally, taking into account \ref{cor:reg:sub} from \ref{cor:reg2co-sie-da}
we obtain
\begin{prop}\label{prop:reg2highreg}
  For each integer $k$ the family $\reghipy_k$ is definable in terms of
  the structure $\fixafr_1$.
  Consequently, the family
  \begin{cmath}
    \big\{ A \colon A \text{ is a subspace of\/ }\fixaf,\, \overline{A}\in\reghipy_k  \big\}
  \end{cmath}
  is definable in $\fixafr_1$ as well.
\end{prop}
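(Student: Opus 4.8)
The plan is to show that regularity of a $k$-dimensional subspace of $\fixproj$ can be detected by the criteria of \ref{cor:reg:sub} using only data that \ref{cor:reg2co-sie-da} already provides. First I would recall what \ref{cor:reg2co-sie-da} hands us: starting from $\fixafr_1$ we can recover the affine space $\fixaf$ (points and lines of $\fixaf$), hence the projective space $\fixproj$ (points and lines, and so all its subspaces as spans of their points), the horizon hyperplane $\hipa$, the restricted symplectic polarity $\perp_\hipa$, and the partial conjugacy $\mathord{\perp}\cap(\reghipy_1\times\hipa)$ telling us which affine points are conjugate to which selfconjugate points. So it suffices to write, for an arbitrary $k$-subspace $A$ of $\fixproj$, a formula in this language saying ``$A\in\reghipy_k$''.

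Next I would split into the two cases of \ref{cor:reg:sub} according to whether $A\subset\hipa$ or not, noting first that $A\subset\hipa$ is expressible (all points of $A$ lie on $\hipa$) and that such $A$ is never regular, since $\rdim(A)\geq 1$ there; so the formula returns false in that case. Assume then $A\not\subset\hipa$, so $A^\infty = A\cap\hipa$ is a $(k-1)$-subspace of $\hipa$, definable as the set of points of $A$ lying on $\hipa$; and the parity $2\div k$ versus $2\ndiv k$ is a fixed known parity, so I can hard-code the relevant clause. If $2\ndiv k$: $A$ is regular iff $A^\infty$ is regular in the symplectic geometry $(\hipa,\perp_\hipa)$, and ``$A^\infty$ regular in $(\hipa,\perp_\hipa)$'' is by definition $\Rad_{\perp_\hipa}(A^\infty)=\emptyset$, which only mentions $\hipa$ and $\perp_\hipa$ — both available. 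If $2\div k$: $A$ is regular iff $\Rad(A^\infty)$ is a single point $p$ and $A\setminus\hipa$ misses $p^\perp$. The condition on $\Rad(A^\infty)$ again lives entirely inside $(\hipa,\perp_\hipa)$, and once $p$ is pinned down it is a point of $\hipa$; the clause ``$A\setminus\hipa$ misses $p^\perp$'' says precisely that no affine point $a$ of $A$ satisfies $a\perp p$, i.e. no $a\in A\setminus\hipa$ with $(a,p)\in\mathord{\perp}\cap(\reghipy_1\times\hipa)$ — exactly the partial conjugacy relation supplied by \ref{cor:reg2co-sie-da}. Assembling these pieces gives a formula for $\reghipy_k$ in the language of $\fixafr_1$, proving the first assertion.

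For the consequence, I would argue that once $\reghipy_k$ is definable and $\fixaf$ is definable (from \ref{thm:reg2af}/\ref{cor:reg2co-sie-da}), the family $\{A:A\text{ is a subspace of }\fixaf,\ \overline{A}\in\reghipy_k\}$ is definable too: a subspace $A$ of $\fixaf$ is a set of affine points closed under the affine-line operation (expressible in $\fixaf$), its projective closure $\overline{A}$ is the span of its points in $\fixproj$ (expressible since $\fixproj$ and its span operation are available), and membership $\overline{A}\in\reghipy_k$ is then the formula just constructed applied to that span. One should keep the bookkeeping straight about dimensions: $\overline{A}$ has projective dimension one more than the affine dimension of $A$ when $A\not\subset\hipa$, so the index $k$ in $\reghipy_k$ is determined by $\dim(A)$ in $\fixaf$; I would phrase the definition uniformly over all $A$ so that this is automatic.

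The main obstacle, and the point where care is needed rather than cleverness, is making sure every clause is genuinely in the permitted language — in particular that the $2\div k$ case only ever uses conjugacy between an affine point and a selfconjugate point (never between two affine points, which \ref{cor:reg2co-sie-da} pointedly does \emph{not} give us, and indeed cannot by \ref{thm:not:reg2perp:1}). The criterion of \ref{cor:reg:sub} is tailored exactly so that this works: when $2\div k$ the needed conjugacy pairs are (affine point, point $p$ of $\hipa$), and when $2\ndiv k$ no affine conjugacy is needed at all. Verifying that no hidden use of the full polarity $\perp$ creeps in — e.g. in expressing ``$\Rad(A^\infty)$ is a point'' or in identifying which point it is — is the part I would check most carefully; but since $A^\infty\subset\hipa$ and $\Rad_{\perp}(A^\infty)=\Rad_{\perp_\hipa}(A^\infty)$ (the radical of a subspace of $\hipa$ computed inside $\hipa$ coincides with that computed in $\fixproj$, because $\perp$ and $\perp_\hipa$ agree on pairs from $\hipa$), this reduces to a statement about $(\hipa,\perp_\hipa)$ alone.
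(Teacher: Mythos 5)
Your overall strategy is exactly the one the paper intends (the paper offers no proof beyond the phrase ``taking into account \ref{cor:reg:sub} from \ref{cor:reg2co-sie-da} we obtain''), and your treatment of the case $A\not\subset\hipa$ is correct, including the careful check that the $2\div k$ clause only consumes conjugacy pairs of the form (regular point, point of $\hipa$). However, there is one genuinely wrong step: you dismiss the case $A\subset\hipa$ by asserting that such a subspace is never regular because $\rdim(A)\geq 1$. That is false. The inequality $\rdim(A)\geq 1$ holds for subspaces of $\hipa$ of \emph{odd} projective dimension (this is what \ref{lem:planeonhipa} and \ref{rem:regplanes:onhor} record for planes, where $\dim(A)=3$), because the geometry induced on $\hipa$ is symplectic; but for even $\dim(A)$ regular subspaces inside $\hipa$ do exist --- e.g.\ regular lines on $\hipa$, which the paper explicitly treats as isolated lines of $\fixgras_1$ in \ref{fct:isol1} and which are needed later as points of $\fixgras_2$ and $\fixafr_2$. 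As written, your formula therefore defines $\regafhipy_k=\set{A\in\reghipy_k\colon A\not\subset\hipa}$ rather than $\reghipy_k$, and the discrepancy matters for the subsequent use of the proposition (it is invoked to define $\fixgras_2=\struct{\reghipy_2,\reghipy_3,\subset}$, whose point set contains regular lines lying on $\hipa$).

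The repair is immediate and stays inside your framework: for $A\subset\hipa$ one has $A^{\perp}\cap\hipa=\set{q\in\hipa\colon q\perp_{\hipa}A}$ and hence $\Rad(A)=A\cap A^{\perp}$ coincides with the radical of $A$ computed in the symplectic geometry $(\hipa,\perp_{\hipa})$, so ``$A$ is regular'' is equivalent to ``$A$ is regular with respect to $\perp_{\hipa}$'', a condition expressible with the data of \ref{cor:reg2co-sie-da} exactly as in your $2\ndiv k$ clause. With that clause added for subspaces on the horizon, the proof is complete and coincides with the paper's intended argument.
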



%
Then from \ref{thm:not:reg2perp:1} we conclude with
\begin{cor}\label{cor:not:reg2perp:gen}
  Let $1\leq k < \dim({\field V})$ be an integer.
  The underlying metric projective space $(\fixproj,\perp)$
  cannot be defined neither in terms of the structure 
  $\GrasSpace(\reghipy,k) = \struct{\reghipy_k,\reghipy_{k+1},\subset}$
  nor in terms of the structure
  $\PencSpace(\reghipy,k) = \struct{\reghipy_k,\peki_k(\reghipy)}$,
  where $\peki_k(\reghipy)$ is the family of regular $k$-pencils.
\end{cor}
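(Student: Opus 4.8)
The idea is to reduce the statement for general $k$ to the already-established case $k=1$ (Theorem~\ref{thm:not:reg2perp:1}) by showing that the structure $\fixafr_1$ of regular points and lines is \emph{definable} in terms of each of the two structures $\GrasSpace(\reghipy,k)$ and $\PencSpace(\reghipy,k)$. Once such a definability chain is in place, any definition of $(\fixproj,\perp)$ in terms of the $k$-level structures would yield a definition in terms of $\fixafr_1$, contradicting~\ref{thm:not:reg2perp:1}. So the real content is a ``recoordinatization'' argument going \emph{downward} in dimension, opposite to \ref{prop:reg2highreg}, which goes upward.

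**Key steps.** First I would treat $\PencSpace(\reghipy,k)=\struct{\reghipy_k,\peki_k(\reghipy)}$. This is the space of pencils over the incidence geometry $\reghipy$; by the standard theory of Grassmann spaces of pencils (the points are the $k$-dimensional regular subspaces, the lines the regular pencils, and collinearity is the relation ``$A\cap B$ is regular of dimension $k-1$ and $A+B$ regular of dimension $k+1$''), one recovers from it the adjacency relation, hence the whole poset of regular subspaces of intermediate dimensions in the neighbourhood of $k$, and in particular — using the size hypothesis on lines and the criteria \ref{cor:reg:sub}, \ref{lem:reg:plane} — the families $\reghipy_1$ and $\reghipy_2$ together with the inclusion relation between them; that is precisely $\fixgras_1=\GrasSpace(\reghipy,1)$, from which $\fixafr_1$ is obtained by deleting isolated objects (\ref{fct:isol1}), a purely incidence-theoretic operation. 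Second, for $\GrasSpace(\reghipy,k)=\struct{\reghipy_k,\reghipy_{k+1},\subset}$ I would note that it interprets $\PencSpace(\reghipy,k)$ — a pencil is recovered as the set of $k$-subspaces lying in a fixed $(k+1)$-subspace and containing a fixed $(k-1)$-subspace, the latter being expressible as an intersection of two distinct $(k+1)$-subspaces — so this case follows from the first. In both cases one must also check that the relevant meet/join subspaces are regular, which is where \ref{fct:reg:hip}, \ref{cor:reg:sub} and \ref{lem:reg:plane} enter: hyperplanes of regular subspaces and regular hyperplanes' overspaces have $\rdim\le1$, and the parity criteria pin down exactly when they are regular.

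**The main obstacle.** The delicate point is that $\reghipy$ is \emph{not} a nice geometry: it contains isolated objects and incomplete flags, so the classical machinery for reconstructing a Grassmann geometry from one of its $k$-shadow spaces does not apply verbatim. Concretely, given a regular $k$-subspace one cannot always find regular $(k\pm1)$-subspaces inside/containing it, so ``adjacency'' classes need not have the expected combinatorial shape. I would deal with this by restricting attention to the sub-configuration of ``good'' $k$-subspaces (those not contained in $\hipa$, i.e. in $\regafhipy_k$, plus those avoiding $\biegun$), for which the regularity criteria of Section~2 guarantee a full complement of regular neighbours of the adjacent dimensions; this sub-configuration is definable inside the pencil/Grassmannian structure as the non-isolated part, and on it the usual reconstruction of the point-line geometry $\fixgras_1$ goes through. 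An alternative, if one prefers to avoid the structure theory of $k$-Grassmannians, is to imitate directly the model-theoretic argument of \ref{thm:not:reg2perp:1}: fix the same ambient $\fixproj$ and hyperplane $\hipa$, vary the symplectic form on $\hipa$ (equivalently the scalar $\varepsilon$, resp. the pair $\mu,\lambda$) so that the family $\reghipy_k$ and the regular $k$-pencils stay the same while $\perp$ changes on pairs of affine points — the computations \eqref{equ:form1}–\eqref{equ:form2} already show regularity of a subspace not contained in $\hipa$ is governed only by $\xi_0$, hence is insensitive to $\varepsilon$ (resp.\ to $\mu$), so the two conjugacies $\perp_{\varepsilon_1},\perp_{\varepsilon_2}$ induce the same $k$-level structures but are distinguished by whether $a_1\perp a_2$. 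Either route completes the deduction; I expect the first (definability of $\fixafr_1$) to be cleaner to write, with the isolated-objects issue as the only real subtlety.
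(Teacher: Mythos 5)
Your primary route has the definability arrow pointing the wrong way, and the resulting deduction does not go through. You propose to show that $\fixafr_1$ is definable in terms of $\GrasSpace(\reghipy,k)$ (resp.\ $\PencSpace(\reghipy,k)$) and then claim that any definition of $(\fixproj,\perp)$ in terms of the $k$-level structure ``would yield a definition in terms of $\fixafr_1$.'' It would not: from ``$\fixafr_1$ is definable over the $k$-level structure'' and ``$(\fixproj,\perp)$ is definable over the $k$-level structure'' one cannot conclude that $(\fixproj,\perp)$ is definable over $\fixafr_1$ --- two structures both expressible in a richer one need not be expressible in each other. Definitions compose in the opposite direction: one needs the $k$-level structure to be definable over $\fixafr_1$, so that a hypothetical definition of $(\fixproj,\perp)$ over the $k$-level structure turns into one over $\fixafr_1$, contradicting \ref{thm:not:reg2perp:1}. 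That upward direction is exactly \ref{prop:reg2highreg}, immediately after which the corollary is stated; the paper's proof is nothing more than the composition of \ref{prop:reg2highreg} with \ref{thm:not:reg2perp:1}. Your downward reconstruction is, besides being the wrong ingredient, far more expensive than the statement requires: the paper carries it out only for $k=2$ (Section 5, \ref{thm:regl2regp}), with substantial case analysis, and for general $k$ offers only a conviction, not a theorem.

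Your ``alternative'' route, by contrast, is correct and is in substance the paper's argument read semantically: the pairs $\perp_{\varepsilon_1},\perp_{\varepsilon_2}$ (resp.\ $\perp_{\mu_1,\lambda},\perp_{\mu_2,\lambda}$) constructed in \ref{thm:not:reg2perp:1} have the same set $\hipa$ of selfconjugate points and agree on all pairs with at least one point in $\hipa$; since every point of $\Rad(A)$ is selfconjugate, so that $\Rad(A)\subset A^\infty\subset\hipa$, the two conjugacies assign the same radical to every subspace, hence the same families $\reghipy_k$ for all $k$, hence identical Grassmannians and pencil spaces --- while differing on a pair of affine points. If you take this route, make the step from ``agreement on $\hipa\times\fixproj$'' to ``equal radicals'' explicit (or simply cite \ref{prop:reg2highreg}); your remark that regularity is governed only by $\xi_0$ is the right observation. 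I would discard the first route entirely and keep only this one.
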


  In view of \ref{cor:not:reg2perp:gen}, the two geometries: of 
  regular subspaces of $(\fixproj,\perp)$ and of the metric projective 
  space $(\fixproj,\perp)$, are distinct.

\subsection{More regular point-line geometry}

Now, let us have a look at the incidence structure $\struct{\reghipy_1,\reghipy_2,\reghipy_3}$.
From \ref{lem:regul3biegun} and \ref{lem:reg:plane} we see that

\begin{fact}\label{fct:isolatedlines}
  The lines through\/ $\biegun$ 
  are either nonregular, when $\biegun\notin\hipa$, or isolated as there are no regular 
  planes containing such lines when $\biegun\in\hipa$.
\end{fact}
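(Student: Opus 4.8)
The claim in \ref{fct:isolatedlines} splits into two cases according to whether $\biegun$ lies on $\hipa$ or not, and in each case the plan is to invoke the regularity criteria already established rather than to compute directly.

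First, suppose $\biegun\notin\hipa$. Then \ref{lem:regul3biegun} states directly that no affine line through $\biegun$ is regular, and the lines on $\hipa$ through $\biegun$ are nonregular since they are totally isotropic (the remark after \ref{cor:regularneWpeku} notes that every nonregular line on $\hipa$ is totally isotropic, and any line in $\hipa$ through $\biegun$ meets $\biegun^\perp$ nontrivially because $\biegun\in\biegun^\perp$ would have to be checked---actually here one argues that such a line lies in $\biegun^\perp\cap\hipa$ only if $\hipa\subseteq\biegun^\perp$, which need not hold, so the cleaner route is: by \ref{lem:reg:line} an affine line through $\biegun$ is regular iff it is not contained in $(L^\infty)^\perp$, but $\biegun\in L\cap\Rad(L)$ forces nonregularity, exactly as in the proof of \ref{lem:regul3biegun}). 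So every line through $\biegun$, affine or not, is nonregular, hence certainly not a vertex of any pencil in $\struct{\reghipy_1,\reghipy_2,\reghipy_3}$: the point is simply that $\biegun\notin\reghipy_2$ in this case, so the statement about these lines is the first disjunct.

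Next, suppose $\biegun\in\hipa$. Then $\biegun^\perp=\hipa$ and $\Rad(\hipa)=\biegun$. By \ref{lem:regul3biegun}, the regular lines through $\biegun$ are precisely the affine lines with direction $\biegun$; the lines on $\hipa$ through $\biegun$ are never regular. So there do exist regular lines $L\in\reghipy_2$ with $\biegun\in L$. The assertion to prove is that such an $L$ is \emph{isolated} in the structure $\struct{\reghipy_1,\reghipy_2,\reghipy_3}$, i.e. no regular plane contains $L$. Let $A$ be any plane of $\fixproj$ with $L\subset A$. If $A\subseteq\hipa$ this is impossible since $L$ is affine; so $A$ is an affine plane with $\biegun\in A^\infty=:M$, and $M$ is a line on $\hipa$ through $\biegun$. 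By \ref{lem:reg:plane}, $A$ is regular iff $M=A^\infty$ is regular in the symplectic geometry induced on $\hipa$. But $\biegun=\Rad(\hipa)$, so $\biegun\perp M$, i.e. $\biegun\in\Rad(M)$, whence $M$ is never regular. Therefore $A$ is nonregular, and since $A$ was an arbitrary plane containing $L$, no regular plane contains $L$; that is, $L$ is an isolated line of $\struct{\reghipy_1,\reghipy_2,\reghipy_3}$, the second disjunct.

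The two cases together give \ref{fct:isolatedlines}. There is no real obstacle here: the only thing to be careful about is the bookkeeping in the $\biegun\in\hipa$ case, namely identifying correctly that every plane on a regular line through $\biegun$ meets $\hipa$ in a line containing $\biegun$, that this line is forced to be a totally isotropic (radical-containing) line of the symplectic space on $\hipa$, and that \ref{lem:reg:plane} then rules out regularity of the plane. All of this is immediate from \ref{lem:reg:plane}, \ref{lem:regul3biegun}, and the description of $\Rad(\hipa)$ in Subsection~\ref{subsubsec:b0H}.
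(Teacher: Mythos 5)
Your proof is correct and follows essentially the same route as the paper, which states this fact as an immediate consequence of \ref{lem:regul3biegun} and \ref{lem:reg:plane} without writing out a separate argument. The only blemish is the digression in the first case about ``lines on $\hipa$ through $\biegun$'' when $\biegun\notin\hipa$: that case is vacuous (no line through $\biegun$ can lie in $\hipa$ then, and in any event $\biegun^\perp=\hipa$ always holds since $\biegun$ is the pole of $\hipa$), so the muddle there is harmless and the conclusion stands.
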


This is the reason to investigate a new lineset
\begin{cmath}
  \linesrb := \regafhipy_2\cap\regbhipy_2
\end{cmath}
of regular affine lines not through $\biegun$.
This lineset gives rise to a new geometry
\begin{cmath}
  \fixafrb_1 := \struct{\regbhipy_1, \linesrb, \subset},
\end{cmath}
which is a substructure of $\fixafr_1$ (and of $\fixgras_1$).
This slight difference between $\fixafr_1$ and $\fixafrb_1$ has no impact on the validity of \ref{lem:regtriang}, \ref{cor:reconplanes}
and \ref{thm:reg2af} with $\fixafr_1$ replaced by $\fixafrb_1$. 
The respective proofs for $\fixafrb_1$ become a bit more complex 
but are based on the same ideas. Actually we can state even more:

\begin{thm}\label{thm:regb2af}
  The structures\/ $\fixafr_1$ and\/ $\fixafrb_1$ are mutually definable.
  Consequently, the affine space $\fixaf$, and thus the projective space
  $\fixproj$, is definable in terms of\/ $\fixafrb_1$.
\end{thm}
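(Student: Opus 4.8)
The plan is to establish mutual definability of $\fixafr_1$ and $\fixafrb_1$ directly; the second assertion then follows from Theorem \ref{thm:reg2af}. One direction is immediate: $\fixafrb_1$ is a substructure of $\fixafr_1$ obtained by deleting the regular lines through $\biegun$, so it suffices to characterize those lines inside the language of $\fixafr_1$. When $\biegun\notin\hipa$ there is nothing to do, since by \ref{fct:isolatedlines} (and \ref{lem:regul3biegun}) no affine line through $\biegun$ is regular, so $\linesr = \linesrb$ and the two structures coincide. When $\biegun\in\hipa$ the regular lines through $\biegun$ are exactly the affine lines with direction $\biegun$ (again \ref{lem:regul3biegun}), and these form a distinguished parallel class: by \ref{cor:regularneWpeku} all lines in any other parallel class that meet $\biegun^\perp\cap\hipa=\hipa$ at a selfconjugate point are nonregular, whereas lines with direction $\biegun$ are all regular and pairwise non-coplanar-with-a-common-point in a way that can be detected combinatorially. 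Concretely, I would write a formula in the incidence language of $\fixafr_1$ that picks out the parallel class of $\biegun$ — e.g. a line $L$ has direction $\biegun$ iff $L$ is regular and every regular line meeting $L$ in an affine point, together with $L$, spans an affine plane $A$ with $\srodek(A)\notin L$ — using the definability of parallelism and of the plane-operator $\pi$, $\pi'$ from \ref{cor:reconplanes} and the proof of \ref{thm:reg2af}. Deleting precisely this class from the lineset of $\fixafr_1$ yields $\fixafrb_1$.

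For the reverse direction I need to recover $\fixafr_1$ from $\fixafrb_1$, i.e. to add back the regular lines through $\biegun$ (only an issue when $\biegun\in\hipa$). The strategy mirrors the proof of \ref{thm:reg2af}: the analogues of \ref{lem:regtriang}, \ref{cor:reconplanes} and \ref{thm:reg2af} hold with $\fixafr_1$ replaced by $\fixafrb_1$, so inside $\fixafrb_1$ one can define the affine space $\fixaf$ together with its horizon $\hipa$, the restricted conjugacy $\perp_\hipa$, and (via \ref{lem:reg2horiz}) the incidence between affine points and selfconjugate points through the $[q]$-sets. Having reconstructed $\fixaf$ and $\perp$ restricted as in \ref{cor:reg2co-sie-da}, the point $\biegun$ is recovered as the pole of $\hipa$, and then the regular affine lines through $\biegun$ are exactly the affine lines $\LineOn(a,\biegun)$ with $a$ an affine point; adjoining this family to $\linesrb$ gives back $\linesr$ and hence $\fixafr_1$. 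The point requiring care is verifying that the triangle-based reconstruction of \ref{lem:regtriang}–\ref{thm:reg2af} still goes through after the extra lines are removed: one must check that through each affine point distinct from $\srodek(A)$ on a plane $A\in\planesr$ there still passes a line of $\linesrb$ (not merely of $\linesr$) crossing the sides of the witnessing triangle in two affine points, and that in \ref{lem:reg2collin} the two auxiliary planes $A_1,A_2$ can be chosen so that the connecting lines avoid direction $\biegun$.

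The main obstacle is exactly this bookkeeping in the modified reconstruction: the counting argument in \ref{lem:regtriang} used that there are at least $5$ lines through $x$ in $A$ and all but a bounded number are regular; after excising direction $\biegun$ one loses at most one further line per point, so the bound on the field size (lines of size at least $6$) still leaves a regular line in $\linesrb$ through each admissible point, but this has to be stated carefully, and similarly the "at least $6$ lines in a projective pencil" estimate in \ref{lem:reg2collin} must be re-examined to ensure the required $M_1,M_2\subset\hipa$ can be taken so that the resulting planes contribute lines in $\linesrb$. Once these inequalities are checked — which is routine given the standing assumption on the ground field — the rest is a matter of transcribing the formulas of Section 3 with $\linesr$ replaced by $\linesrb$, and the theorem follows.
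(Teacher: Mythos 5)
Your overall plan is the one the paper intends: the paper gives no detailed argument for \ref{thm:regb2af}, only the remark that \ref{lem:regtriang}, \ref{cor:reconplanes} and \ref{thm:reg2af} survive the replacement of $\fixafr_1$ by $\fixafrb_1$, and your second and third paragraphs correctly locate the extra bookkeeping (the line count in \ref{lem:regtriang} and the choice of auxiliary planes in \ref{lem:reg2collin}); since only the case $\biegun\in\hipa$ matters and there one loses at most one further direction per affine plane, the standing hypothesis of at least $6$ points per line does leave enough lines of $\linesrb$, as you say. Recovering $\biegun$ as the pole (radical) of $\perp_{\hipa}$ after the reconstruction is also the right move.

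There is, however, a concrete error in the easy direction. The formula you propose — $L$ has direction $\biegun$ iff every regular line meeting $L$ in an affine point spans with $L$ a plane $A$ with $\srodek(A)\notin L$ — is satisfied by \emph{every} regular affine line $L$, so it defines nothing: by \ref{fct:classif:planes} and \ref{fct:plane:srodek}, in any plane $A$ with $\rdim(A)\leq 1$ the nonregular affine lines are exactly those incident with $\srodek(A)$, hence a regular $L\subset A$ never passes through $\srodek(A)$, whether or not $L^\infty=\biegun$. The accompanying claim that all lines in the other parallel classes "meet $\biegun^\perp\cap\hipa=\hipa$ at a selfconjugate point" and are therefore nonregular is also false (every affine line meets $\hipa$ in a selfconjugate point, and most are regular). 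The invariant you want is the \emph{location} of $\srodek(A)$, not its incidence with $L$: since $\biegun\perp\hipa$, every affine plane $A$ containing a line of direction $\biegun$ has isotropic $A^\infty$ and is therefore nonregular, so $\srodek(A)$ is improper and $[A]=A$; conversely, if $L^\infty\neq\biegun$ then $L+M$ is a regular plane for any regular line $M\subset\hipa$ through $L^\infty$. Thus $L^\infty=\biegun$ iff no regular plane (equivalently, no $\pi(\Delta)$ that is an affine plane with one point deleted) contains $L$. More economically, both directions can be dispatched by \ref{cor:reg2co-sie-da} and its $\fixafrb_1$-analogue: once $\hipa$ and $\perp_{\hipa}$ are defined, $\biegun$ is the radical of $\perp_{\hipa}$ and the parallel class to be removed or restored is $\{\,L : L^\infty=\biegun\,\}$ — exactly the device you already use, correctly, in the reverse direction.
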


\subsection{Automorphism group of regular point-line geometry}

In view of \ref{thm:reg2af}, $\Aut(\fixafr_1)$ 
is a subgroup of $\Aut(\fixaf)$.  
Let $f\in\Aut(\fixaf)$ and let $f^\infty$ be its action on the horizon $\hipa$
of \fixaf. 
\\
If $f\in\Aut(\fixafr_1)$ then $f^\infty$ must be an automorphism of the induced metric
projective symplectic geometry on $\hipa$.
\\
Moreover, in view of \ref{lem:reg2horiz}, $f$ must preserve the family of hyperplanes
$\set{[q]\colon q \in\hipa}$.
The following is simple, though quite useful.
\begin{lem}\label{crit:aut:gen}
  Let $f\in\Aut(\fixaf)$. The following conditions are equivalent.
  \begin{sentences}\itemsep-2pt
  \item
    $f \in \Aut(\fixafr_1)$
  \item
    $f^\infty \in \Aut(\struct{\hipa,\perp_{\hipa}})$ and 
    $(f,f^\infty)$ preserves $\perp\cap\; (\reghipy_1 \times \hipa)$.
  \end{sentences}
\end{lem}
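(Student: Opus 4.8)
The plan is to establish the equivalence of the two conditions by a round-trip argument, exploiting what was already shown in Theorem~\ref{thm:reg2af}, Lemma~\ref{lem:reg2horiz}, and Lemma~\ref{lem:reg:line}. The non-trivial direction is (ii)$\implies$(i): given $f\in\Aut(\fixaf)$ whose horizon action $f^\infty$ preserves $\perp_\hipa$ and such that $(f,f^\infty)$ preserves the partial relation $\mathord{\perp}\cap(\reghipy_1\times\hipa)$, I must show $f$ carries regular lines to regular lines (and, since $f$ is a bijection with $f^{-1}$ of the same kind, also non-regular to non-regular), and that it respects the restricted lineset $\linesr$, i.e.\ preserves the property ``$L\subset\hipa$'' among lines — but that last point is automatic since $f\in\Aut(\fixaf)$ already permutes affine lines among themselves and permutes the horizon.

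First I would record the easy direction (i)$\implies$(ii): if $f\in\Aut(\fixafr_1)$, then by the remarks immediately preceding the lemma $f^\infty$ is an automorphism of the symplectic geometry $\struct{\hipa,\perp_\hipa}$; and the fact that $(f,f^\infty)$ preserves $\mathord{\perp}\cap(\reghipy_1\times\hipa)$ follows from Lemma~\ref{lem:reg2horiz}, because for each $q\in\hipa$ the set $[q]$ (affine points $a$ with $\LineOn(a,q)\notin\linesr$) is exactly the affine part of $q^\perp$, and this set is defined purely in terms of the geometry of $\fixafr_1$; hence $f([q])=[f^\infty(q)]$, which says precisely that $a\perp q \iff f(a)\perp f^\infty(q)$ for $a\in\reghipy_1$, $q\in\hipa$.

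For (ii)$\implies$(i), the key tool is Lemma~\ref{lem:reg:line}: an affine line $L$ is regular iff $L\not\subset(L^\infty)^\perp$, equivalently iff there is an affine point $a\in L$ with $a\not\perp L^\infty$. So let $L$ be a regular affine line, put $q=L^\infty$ and pick an affine point $a\in L$ with $a\not\perp q$. Then $f(L)$ is an affine line with $f(L)^\infty=f^\infty(q)$ (because $f\in\Aut(\fixaf)$ sends the horizon to itself and respects the ``direction'' of an affine line), and $f(a)$ is an affine point on $f(L)$. Since $(f,f^\infty)$ preserves $\mathord{\perp}\cap(\reghipy_1\times\hipa)$ and $a\not\perp q$ with $a\in\reghipy_1$, $q\in\hipa$, we get $f(a)\not\perp f^\infty(q)=f(L)^\infty$; by Lemma~\ref{lem:reg:line} again, $f(L)$ is regular. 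Running the same argument with $f^{-1}$ (whose horizon action is $(f^\infty)^{-1}$, again an automorphism of $\struct{\hipa,\perp_\hipa}$, and which preserves the same partial relation) shows that $f$ maps non-regular affine lines to non-regular affine lines as well. Hence $f$ restricts to a bijection of $\linesr$ and of $\reghipy_1$, i.e.\ $f\in\Aut(\fixafr_1)$.

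The main obstacle — really the only place requiring care — is the bookkeeping that $f^\infty$ is well-defined and that $f$ genuinely commutes with the ``improper point'' operation $L\mapsto L^\infty$; this is standard for collineations of an affine space extended to its projective closure, but it is what makes the transfer of the regularity criterion along $f$ legitimate. One should also note explicitly that the hypothesis $f^\infty\in\Aut(\struct{\hipa,\perp_\hipa})$ is used only to match the remarks preceding the lemma (and to make $(f^{-1})^\infty$ admissible in the symmetric step); the regularity transfer itself needs only the preservation of $\mathord{\perp}\cap(\reghipy_1\times\hipa)$. Assembling these observations yields both implications and completes the proof.
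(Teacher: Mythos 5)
Your proof is correct and is essentially an unpacking of the paper's one-line argument, which simply cites Corollary~\ref{cor:reg2co-sie-da} (itself assembled from Theorem~\ref{thm:reg2af} and Lemma~\ref{lem:reg2horiz}): definability of $\perp_{\hipa}$ and $\mathord{\perp}\cap(\reghipy_1\times\hipa)$ in $\fixafr_1$ yields (i)$\implies$(ii), while the regularity criterion of Lemma~\ref{lem:reg:line} (equivalently Corollary~\ref{cor:regularneWpeku}) yields (ii)$\implies$(i), exactly as you argue. Your closing observation that the hypothesis $f^\infty\in\Aut(\struct{\hipa,\perp_{\hipa}})$ is not actually needed for the converse direction is a correct, slightly sharper aside.
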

\begin{proof}
  Immediate by \ref{cor:reg2co-sie-da}.
\end{proof}

\begin{prop}\label{prop:aut:nob3H}
  Let\/ $W$ be the 
  subspace of\/ $\field V$ with $\hipa = \set{\gen{u}\colon u \in W,u\neq \theta}$
  and let $\xi_0$ be the restriction of $\xi$ to $W$.
  Then, clearly, $\xi_0$ determines $\perp_{\hipa}$.
  Assume that  
  $\biegun\notin\hipa$. Then
  \begin{equation}
    \Aut(\fixafr_1) = \big\{ \varphi\in\Gamma L(W) \colon 
    \varphi \text{ preserves } \perp_{\hipa} \big\}.
  \end{equation}
\end{prop}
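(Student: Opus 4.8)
The plan is to prove the two inclusions separately. For the easy inclusion ($\subseteq$), suppose $f\in\Aut(\fixafr_1)$. By Lemma~\ref{crit:aut:gen}, $f\in\Aut(\fixaf)$ and $f^\infty\in\Aut(\struct{\hipa,\perp_{\hipa}})$. Since $\biegun\notin\hipa$, the point $\biegun$ is an affine point, and because the characterization of $\fixafr_1$ (via \ref{thm:reg2af} and \ref{cor:reg2co-sie-da}) recovers $\fixproj$ and the form on $\hipa$, the collineation $f$ of $\fixaf$ together with $f^\infty$ patch to a collineation of $\fixproj$. The point $\biegun$ is the pole of $\hipa$ and can be recovered as the unique isolated point of $\fixgras_1$ (cf. \ref{fct:isol1}), hence $f$ fixes $\biegun$; thus $f$ is a collineation of $\fixproj$ fixing $\hipa$ pointwise-as-a-set and fixing $\biegun$. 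Restricting such a collineation to the complement $\hipa^c = \reghipy_1$ and lifting, $f$ is induced by a semilinear automorphism $\varphi$ of $\field V$ that stabilizes $W$ (since it fixes $\hipa$) and fixes $\gen{e_0}$; the restriction $\varphi\restriction W \in\Gamma L(W)$ is exactly $f^\infty$, which preserves $\perp_{\hipa}$. This gives the inclusion into the right-hand set.

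For the reverse inclusion ($\supseteq$), take $\varphi\in\Gamma L(W)$ preserving $\perp_{\hipa}$; I want to extend $\varphi$ to an automorphism of $\fixafr_1$. Using the coordinatization from Section~\ref{sec:pseudo-polarity} (the case $\biegun\notin\hipa$, so $\dim\fixV = 2k+1$, $\biegun=[1,0,\dots,0]$, $\hipa\colon x_0=0$), extend $\varphi$ to $\widehat\varphi\in\Gamma L(\fixV)$ by declaring $\widehat\varphi(e_0)=e_0$ and $\widehat\varphi\restriction W=\varphi$, with the same accompanying field automorphism. This $\widehat\varphi$ induces a collineation $f$ of $\fixproj$ fixing $\hipa$ (setwise) and $\biegun$, hence a collineation of $\fixaf=\fixproj\setminus\hipa$ with $f^\infty=\varphi$ (viewed projectively). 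By Lemma~\ref{crit:aut:gen}, to conclude $f\in\Aut(\fixafr_1)$ it suffices to check that $f^\infty\in\Aut(\struct{\hipa,\perp_{\hipa}})$ — which holds by hypothesis — and that $(f,f^\infty)$ preserves the relation $\mathord{\perp}\cap(\reghipy_1\times\hipa)$. For the latter I would invoke the key identity noted just before Section~\ref{sec:pseudo-polarity}'s end: for $x\in\hipa$ one has $x^\perp = \biegun + (x^\perp\cap\hipa)$, so for an affine point $a$ and $q\in\hipa$, $a\perp q \iff a\in q^\perp \iff$ the affine point $a$ lies on the affine hyperplane through (the affine representative governed by) $\biegun$ and $q^\perp\cap\hipa$; equivalently, using formula \eqref{equ:form1} with $\varepsilon=\xi(e_0,e_0)$, $\xi(a, q)=\xi_0(\bar a, q)$ where $\bar a$ is the $W$-component of a representative of $a$. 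Since $\widehat\varphi$ respects the direct-sum decomposition $\fixV = W \oplus \gen{e_0}$ and $\varphi$ preserves $\xi_0$, the relation $\xi(a,q)=0$ is preserved; that is exactly preservation of $\perp\cap(\reghipy_1\times\hipa)$.

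The main obstacle I anticipate is the gluing step in the easy inclusion: showing that an arbitrary $f\in\Aut(\fixafr_1)$, which a priori is only a collineation of the affine space $\fixaf$ together with a compatible map on $\hipa$, actually arises from a single semilinear map of $\fixV$ stabilizing $W$. The cleanest route is to note that $f$ extends to a collineation of $\fixproj$ (an affine collineation of $\fixaf$ together with its horizon action always does, since $\fixproj$ is the projective completion of $\fixaf$ and $\hipa$ is the horizon, and $\dim\ge 3$ so the fundamental theorem of projective geometry applies), that this collineation fixes $\biegun$ because $\biegun$ is the pole of $\hipa$ and is the unique isolated point of the ambient Grassmannian $\fixgras_1$ — a property invariant under $\Aut(\fixafr_1)$ via the definability in \ref{thm:reg2af} — and then apply the fundamental theorem to get the semilinear lift $\widehat\varphi$ with $\widehat\varphi(W)=W$ and $\widehat\varphi(\gen{e_0})=\gen{e_0}$; rescaling $e_0$ we may take $\widehat\varphi(e_0)=e_0$. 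The restriction $\varphi:=\widehat\varphi\restriction W$ is then the desired element of $\Gamma L(W)$, and it preserves $\perp_{\hipa}$ precisely because $f^\infty\in\Aut(\struct{\hipa,\perp_{\hipa}})$ by Lemma~\ref{crit:aut:gen}. A minor point to dispatch along the way is that every such $\varphi$ does preserve the set of \emph{regular} affine lines (not merely the incidence of $\fixaf$): this follows from Lemma~\ref{lem:reg:line}, since $L$ affine is regular iff $L\not\subset(L^\infty)^\perp$, and both $L^\infty$ and the operation $(\cdot)^\perp$ on $\hipa$ are preserved by $(f,f^\infty)$.
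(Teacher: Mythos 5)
Your argument is correct and takes essentially the same route as the paper's proof: both identify $\biegun$ as the unique point of $\fixaf$ through which every line is nonregular (hence fixed by any automorphism), realize each automorphism as a semilinear bijection of $W$ preserving $\perp_{\hipa}$, and establish the reverse inclusion by the direct computation $\xi(h_1+e_0,h_2)=\xi_0(h_1,h_2)$ coming from \eqref{equ:form1}. Your handling of the lift to $\fixV$ and the appeal to \ref{crit:aut:gen} merely make explicit what the paper compresses into ``a direct computation''.
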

\begin{proof}
  Note that \fixaf\ can be presented as the affine space $\AfSpace(W)$ over $W$.
  $\biegun$ is the unique point of \fixaf\ such that each line through it is nonregular
  and thus $\biegun$ remains invariant under automorphisms of $\fixafr_1$.
  One can coordinatize $W$ so as $\biegun$ is the origin of the coordinate system
  and thus each automorphism $\varphi$ of $\fixafr_1$ is a semilinear bijection of $W$.
  Since $\hipa$ is the horizon of \fixaf,
  from  \ref{lem:reg2horiz} we get that $\varphi$ preserves $\perp_{\hipa}$.
  A direct computation based on \eqref{equ:form1} justifies that if 
  $\varphi\in\Gamma L(W)$ preserves the conjugacy defined on $\hipa$ by the symplectic
  form $\xi_0$ then $\varphi$ preserves the class of regular lines.
  This closes our proof.
\end{proof}

The technique used in the proof of \ref{thm:not:reg2perp:1} enables us to 
formulate a more elementary definition of the structure $\fixafr_1$.
\begin{prop}
  Let\/ $W$ be the 
  subspace of\/ $\field V$ with $\hipa = \set{\gen{u}\colon u \in W,u\neq \theta}$
  and let $\xi_0$ be the restriction of $\xi$ to $W$.
  Then, clearly, $\xi_0$ determines $\perp_{\hipa}$.
  Moreover, one can represent $\fixafr_1$ as a line reduct of the affine 
  space $\fixaf = \AfSpace(W)$ and $\hipa$ is the horizon of \fixaf.
  Assume that 
  $\biegun\notin\hipa$; consequently, $\biegun$ is a point of $\fixafr_1$.
  \begin{sentences}
  \item
    Let $\lines_\ast$ be the class of lines in the symplectic polar space 
    determined by the conjugacy $\perp_{\hipa}$ on $\hipa$, let
    $a$ be a point of \fixaf, and $\planes^a_\ast$ be the set of planes
    of the form $a + L$ with $L\in\lines_\ast$.
    Finally, write 
    $\lines^a_\ast = \big\{ L\in\lines\colon 
    L\subset A \text{ for some } A\in\planes^a_\ast \big\}$.
    Then 
    $\linesr = \lines\setminus\lines^{\biegun}_\ast$.
    Consequently, for an arbitrary affine point $a$ we have
    $\fixafr_1 \cong \struct{W,\lines\setminus\lines^a_\ast}$.
  \item
    Let $u,v\in W$. Then $u,v$ lie on a nonregular line iff $\xi_0(u,v)=0$.
  \end{sentences}
\end{prop}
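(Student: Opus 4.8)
The plan is to work in the coordinatization of Section~\ref{sec:pseudo-polarity} for the case $\biegun\notin\hipa$. Write $\biegun=\gen{e_0}$ and let $W$ be the vector hyperplane of $\field V$ whose projectivization is $\hipa$, so $\field V=W\oplus\gen{e_0}$; since $\biegun$ is the pole of $\hipa$ and lies off $\hipa$, one has $e_0^{\perp}=W$ and $\varepsilon:=\xi(e_0,e_0)\neq 0$, whence $\xi(w+\alpha e_0,\,w'+\alpha'e_0)=\xi_0(w,w')+\alpha\alpha'\varepsilon$ for $w,w'\in W$, where $\xi_0:=\xi\restriction{W}$ is the nondegenerate alternating form inducing $\perp_{\hipa}$. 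Identifying the affine point $\gen{w+e_0}$ with $w\in W$ realizes $\fixaf=\AfSpace(W)$, with horizon $\hipa$ and with $\biegun$ sitting at the origin $\theta$; an affine line then has the form $L=w_0+\gen d$ ($\theta\neq d\in W$) with improper point $L^\infty=\gen d$. The backbone of the proof is the regularity criterion read off from \ref{lem:reg:line}: since $L^\infty=\gen d$ lies in $(L^\infty)^{\perp}$ automatically ($\xi(d,d)=\xi_0(d,d)=0$), the inclusion $L\subseteq(L^\infty)^{\perp}$ holds iff the affine point $\gen{w_0+e_0}$ lies there too, i.e.\ iff $\xi(w_0+e_0,d)=0$, i.e.\ iff $\xi_0(w_0,d)=0$. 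Thus $L=w_0+\gen d$ is regular iff $\xi_0(w_0,d)\neq 0$ --- a condition visibly independent of the point $w_0$ chosen on $L$.

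Part~(ii) is then immediate: for $u\neq v$ the line $\overline{u,v}$ is $u+\gen{v-u}$, and $\xi_0(u,v-u)=\xi_0(u,v)$ since $\xi_0$ is alternating, so $\overline{u,v}$ is nonregular exactly when $\xi_0(u,v)=0$ (the case $u=v$ being clear, since $\xi_0(u,u)=0$ and every affine point lies on a nonregular line). For part~(i) I first unwind the definitions. The lines of the symplectic polar space that $\perp_{\hipa}$ induces on $\hipa$ are the totally isotropic ones, so $\lines_\ast$ corresponds to the $2$-dimensional subspaces $M$ of $W$ on which $\xi_0$ vanishes; since $\biegun=\theta$, the family $\planes^{\biegun}_\ast$ consists precisely of these totally isotropic planes $M$, regarded as affine planes through $\theta$, and $\lines^{\biegun}_\ast$ is the set of affine lines lying in some such $M$. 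The goal is the equality $\lines^{\biegun}_\ast=\lines\setminus\linesr$, i.e.\ that an affine line is nonregular iff it lies in a totally isotropic plane. One inclusion falls straight out of the criterion: if $L\subseteq M$ with $M$ totally isotropic, pick a point $w_0\in L$; then $w_0\in M$ and the direction $d$ of $L$ (a difference of two points of $L$) lies in $M$, so $\xi_0(w_0,d)=0$ and $L$ is nonregular. (Alternatively, by \ref{lem:reg:plane} and \ref{fct:classif:planes}, the projective plane spanned by $\biegun$ and $M$, having totally isotropic improper line $M$ and hence radical $M$, carries no regular line.)

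For the reverse inclusion, let $L=w_0+\gen d$ be nonregular, i.e.\ $\xi_0(w_0,d)=0$. If $w_0\notin\gen d$, then $M:=\gen{w_0,d}$ is a $2$-dimensional subspace on which $\xi_0$ vanishes identically (all of $\xi_0(w_0,w_0),\xi_0(d,d),\xi_0(w_0,d)$ are zero) and $L\subseteq M$, so $L\in\lines^{\biegun}_\ast$. If instead $w_0\in\gen d$, then $L=\gen d$ is the line through $\biegun$ in direction $d$, and we must enlarge $\gen d$ to a totally isotropic plane $M:=\gen{d,d'}$ by choosing $d'\in W$ with $\xi_0(d,d')=0$ and $d'\notin\gen d$. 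This last step is the one delicate point --- and the only place a dimension hypothesis is genuinely needed: such a $d'$ exists iff the orthogonal $\{w\in W\colon\xi_0(d,w)=0\}$ strictly contains $\gen d$, i.e.\ iff $\dim W\geq 3$, equivalently (as $\dim W=2k$ is even) $\dim W\geq 4$, i.e.\ $\dim\fixproj\geq 3$. For $\dim\fixproj=2$ there are no totally isotropic planes at all and (i) actually fails, so this standing assumption must be invoked. Granting it, $\lines^{\biegun}_\ast=\lines\setminus\linesr$; and since $\fixafr_1$ is by construction the line reduct $\struct{W,\linesr}$ of $\AfSpace(W)$ (with $\biegun=\theta$ retained as its single isolated point), this is the first assertion of~(i).

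Finally, for an arbitrary affine point $a$, the translation $\tau_a\colon w\mapsto w+a$ is an automorphism of $\AfSpace(W)$ that carries the totally isotropic planes through $\theta$, i.e.\ $\planes^{\biegun}_\ast$, onto $\planes^a_\ast$, hence carries $\lines^{\biegun}_\ast$ onto $\lines^a_\ast$ and therefore $\linesr=\lines\setminus\lines^{\biegun}_\ast$ onto $\lines\setminus\lines^a_\ast$. Thus $\tau_a$ realizes the isomorphism $\fixafr_1\cong\struct{W,\lines\setminus\lines^a_\ast}$, which completes the proof.
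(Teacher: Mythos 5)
Your proof is correct and follows exactly the route the paper intends: the paper offers no explicit proof of this proposition, merely pointing to ``the technique used in the proof of \ref{thm:not:reg2perp:1}'', and your computation with $\xi(w+\alpha e_0,w'+\alpha'e_0)=\xi_0(w,w')+\alpha\alpha'\varepsilon$ combined with the criterion of \ref{lem:reg:line} is precisely that technique carried out in detail. Your observation that the totally-isotropic-plane description in (i) requires $\dim W\geq 4$ (it fails for the lines through $\biegun$ when $\hipa$ is a single hyperbolic line) is a legitimate caveat that the paper leaves implicit.
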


Now, we pass to the case $\biegun\in\hipa$. Let us adopt the coordinate system
as in subs. \ref{subsubsec:b0H} and let $\xi_0$ be the restriction of $\xi$ to $W$.

\begin{prop}\label{prop:aut:b3H}
  The following conditions are equivalent.
  \begin{sentences}\itemsep-2pt
  \item
    $f\in\Aut(\fixafr_1)$
  \item
    There are $\varphi\in \Gamma L(W)$ and a vector $\omega\in W$ such that
    $f(x) = \varphi(x) + \omega$ for each $x \in W$, 
    $\varphi$ preserves $\perp_{\hipa}$,
    and the following holds
    \begin{cmath}
      \xi_0(x,y) = \pi_1(y) \implies
      \xi_0(\varphi(x),\varphi(y)) + \xi_0(\omega,\varphi(y)) = \pi_1(\varphi(y)),
    \end{cmath}
    for all $x,y \in W$, where $\pi_1$ is the projection on the 1st coordinate.
  \end{sentences}
\end{prop}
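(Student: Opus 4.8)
The plan is to route the statement through the criterion \ref{crit:aut:gen}; recall that $\Aut(\fixafr_1)\subseteq\Aut(\fixaf)$ by \ref{thm:reg2af}, so in both directions one is dealing with an affine automorphism $f$ of \fixaf. In the coordinates of Subsection~\ref{subsubsec:b0H} the space \fixaf\ is $\AfSpace(W)$, the affine point with coordinate $w\in W$ being $\gen{e_0+w}$ with $e_0$ the first basis vector, and $\hipa$ is its horizon, coordinatized by $W$; moreover $\Rad(\xi_0)$ is one-dimensional and corresponds to $\biegun$, and $\pi_1$ is nonzero on $\Rad(\xi_0)\setminus\{\theta\}$. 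Since lines of \fixproj\ have size at least $6$, the fundamental theorem of affine geometry makes every $f\in\Aut(\fixaf)$ of the shape $f(x)=\varphi(x)+\omega$ with $\varphi\in\Gamma L(W)$, $\omega\in W$, and conversely; the trace $f^\infty$ of such an $f$ on $\hipa$ is exactly the collineation induced by $\varphi$. Hence the first clause of \ref{crit:aut:gen}, namely $f^\infty\in\Aut(\struct{\hipa,\perp_{\hipa}})$, is equivalent to ``$\varphi$ preserves $\perp_{\hipa}$'', and this forces $\varphi$ to stabilize $\Rad(\xi_0)$, since the radical is determined by the conjugacy relation alone.

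Next I would translate the second clause of \ref{crit:aut:gen}. A direct computation with the explicit form of Subsection~\ref{subsubsec:b0H} gives, for an affine point $a$ with coordinate $w\in W$ and a point $q=\gen{v}\in\hipa$ with $v\in W$,
\begin{cmath}
  \xi(e_0+w,v)=\pi_1(v)+\xi_0(w,v),
\end{cmath}
so that, since $\mathrm{char}(\goth F)=2$, the relation $a\perp q$ is equivalent to $\xi_0(w,v)=\pi_1(v)$ (by \ref{lem:reg:line} this is also the condition for the affine line through $a$ in direction $q$ to be non-regular). Applying $f$ one gets $f(a)=\gen{e_0+\varphi(w)+\omega}$ and $f^\infty(q)=\gen{\varphi(v)}$, so $f(a)\perp f^\infty(q)$ is, by bilinearity of $\xi_0$, equivalent to $\xi_0(\varphi(w),\varphi(v))+\xi_0(\omega,\varphi(v))=\pi_1(\varphi(v))$. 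Thus ``$(f,f^\infty)$ preserves $\perp\cap(\reghipy_1\times\hipa)$'' amounts to the assertion that
\begin{cmath}
  \xi_0(x,y)=\pi_1(y)\iff\xi_0(\varphi(x),\varphi(y))+\xi_0(\omega,\varphi(y))=\pi_1(\varphi(y))
\end{cmath}
holds for all $x,y\in W$, whose left-to-right half is precisely the implication in (ii). Together with the first clause this gives (i)$\implies$(ii) at once.

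For (ii)$\implies$(i) one must recover the right-to-left half of that equivalence from its left-to-right half together with ``$\varphi$ preserves $\perp_{\hipa}$''. Fix $y\in W$. If $y\notin\Rad(\xi_0)$, then also $\varphi(y)\notin\Rad(\xi_0)$ (as $\varphi$ stabilizes the radical), so each side of the equivalence, regarded as a condition on $x$, defines an affine hyperplane of $W$; the implication says one of these hyperplanes is contained in the other, and two affine hyperplanes related by inclusion coincide, so the equivalence holds for this $y$. If $y\in\Rad(\xi_0)$ with $y\neq\theta$, then both sides are identically false: $\xi_0(\cdot,y)\equiv 0$ while $\pi_1(y)\neq 0$, and likewise for $\varphi(y)\in\Rad(\xi_0)\setminus\{\theta\}$; the case $y=\theta$ is trivial. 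Hence the full equivalence holds, so $(f,f^\infty)$ preserves $\perp\cap(\reghipy_1\times\hipa)$, and \ref{crit:aut:gen} gives $f\in\Aut(\fixafr_1)$.

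The main obstacle -- light in both parts -- is first getting the indices right in the formula for $\xi$ from Subsection~\ref{subsubsec:b0H} so that $\xi(e_0+w,v)$ splits off exactly $\pi_1(v)$ plus the remainder $\xi_0(w,v)$, and second the observation used at the end: a one-sided implication between two affine-hyperplane conditions automatically strengthens to an equivalence once the degenerate direction $\biegun$ is handled separately, which is exactly what allows (ii) to be phrased with ``$\implies$'' rather than ``$\iff$''.
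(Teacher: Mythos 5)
Your proposal follows the same route as the paper's own proof: reduce to the criterion of \ref{crit:aut:gen}, write $f$ as $x\mapsto\varphi(x)+\omega$ with $\varphi\in\Gamma L(W)$, and translate the preservation of $\perp\cap(\reghipy_1\times\hipa)$ via the identity $\xi([1,x],[0,y])=\pi_1(y)+\xi_0(x,y)$. The only substantive addition is your explicit justification that the one-sided implication in (ii) already forces the full equivalence (comparing the two affine hyperplanes for $y\notin\Rad(\xi_0)$ and handling the direction $\biegun$ separately) -- a point the paper leaves implicit -- and that argument is correct.
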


\begin{proof}
  It is clear that each automorphism $f$ of $\fixafr_1$ is a composition of a semilinear map
  $\varphi$ and a translation on a vector $\omega$.
  In the projective coordinates we can write
  $f([1,x]) = [1,\varphi(x)+\omega]$ and $f([0,y]) = [0,\varphi(y)]$.
  The map $f$ of such a form is an automorphism of $\fixafr_1$ iff it preserves $\perp_{\hipa}$
  and it preserves regular lines. From \ref{crit:aut:gen}, $f$ preserves regular lines
  iff it preserves suitable restriction of the polarity.
  To complete the proof it suffices to note that
  \begin{cmath}
    \xi([1,x],[0,y]) = \pi_1(y) + \xi_0(x,y)
  \end{cmath}
  for all $x,y \in W$.
\end{proof}

Suppose that $\varphi \in GL(W)$ and
$\xi_0(\varphi(x),\varphi(y)) = c \xi_0(x,y)$ for some $c \neq 0$
and all $x,y \in W$ (then, clearly, $\varphi$ preserves $\perp_{\hipa}$).
The conditions of \ref{prop:aut:b3H} yield
$\xi_0(\omega,\varphi(y)) = \pi_1(cy + \varphi(y))$ for each $y \in W$.
In particular, if $\varphi = \id$ we obtain $\omega\perp W$ and thus $\omega\parallel\biegun$.
If $\varphi$ is a homothety $x\mapsto \alpha x$ with $\alpha\neq 0$ then 
$c = \alpha^2$ and the condition of \ref{prop:aut:b3H} is read as 
$\xi_0(\omega,y) = (\alpha+1)\pi_1(y)$ for all $y\in W$; this yields
$\omega \perp \set{y\colon \pi_1(y) = 0}$.

\section{Grassmannians of regular secunda and hyperplanes}\label{sec:grasssechip}

Note that $\Rad(U) = \Rad(U^\perp)$. This yields that
\begin{rem}\label{rem:correl}
  The mapping $\perp$ is a correlation in $\fixproj$ which maps regular 
  subspaces to regular subspaces.
\end{rem}

Now set $n := \dim(\fixV) = \dim(\fixproj) + 1$. 
In this case $\reghipy_k^\perp = \reghipy_{n-k}$ and, clearly, 
$\regafhipy_k^\perp = \regbhipy_{n-k}$ for each $k$, $1\le k\le n$. 
Consider the Grassmannian of regular secunda and hyperplanes
\begin{cmath}
  \fixgras_{n-2} := \GrasSpace(\reghipy,n-2) = \struct{\reghipy_{n-2},\reghipy_{n-1},\subset}.
\end{cmath}
It can be easily seen that $\fixgras_1 \cong \fixgras_{n-2}^\perp$.
So, based on \ref{thm:reg2af} we can state that the dual of the affine space $\fixaf$
can be defined in terms of $\fixgras_{n-2}$. In the dual of $\fixaf$
we can define $\fixproj$ as well as in $\fixaf$. 
These observation can be summarized in the following

\begin{thm}\label{thm:sechip}
  The structure\/ $\fixafr_1$ can be reconstructed in terms of\/ $\fixgras_{n-2}$.
\end{thm}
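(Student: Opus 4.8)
The plan is to exploit the correlation $\perp$ and reduce everything to \ref{thm:reg2af}. First I would make precise the isomorphism $\fixgras_1\cong\fixgras_{n-2}^\perp$ already announced in the text: by \ref{rem:correl} the map $\perp$ sends $\reghipy_1$ bijectively onto $\reghipy_{n-1}$ and $\reghipy_2$ bijectively onto $\reghipy_{n-2}$, and it reverses inclusion, so $U\subset L$ in $\fixgras_1$ iff $L^\perp\subset U^\perp$ in $\fixgras_{n-2}$. Hence an abstract isomorphism $\fixgras_{n-2}\to\fixgras_1^{\,\mathrm{op}}$ (the opposite incidence structure, points and lines swapped) is available, and it is defined purely combinatorially once we know $\perp$; but what we actually need is that $\fixgras_1^{\,\mathrm{op}}$ is \emph{definable in terms of} $\fixgras_{n-2}$, which is immediate since ``dualizing'' (swapping the roles of points and lines together with the incidence direction) is a definitional operation that uses nothing but the primitive incidence relation.

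The second step is to recover $\fixafr_1$ from $\fixgras_1^{\,\mathrm{op}}$, equivalently to recover $\fixafr_1$ from $\fixgras_1$ together with the knowledge that we are allowed to read off the dual. Here I would first pass, inside $\fixgras_{n-2}$, from the Grassmannian $\fixgras_1^{\,\mathrm{op}}$ to $\fixafr_1^{\,\mathrm{op}}$ by deleting isolated objects: \ref{fct:isol1} identifies exactly which objects of $\fixgras_1$ are isolated (the point $\biegun$, when it is regular, and the regular lines inside $\hipa$), and isolatedness is a property expressible in the incidence language, hence preserved under dualization; so $\fixafr_1^{\,\mathrm{op}}$ is definable in $\fixgras_{n-2}$. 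Applying \ref{thm:reg2af} to $\fixafr_1$ tells us that $\fixaf$, and then $\fixproj$, is definable in terms of $\fixafr_1$; dualizing this chain of definitions gives that the dual affine space $\fixaf^{\,\mathrm{op}}$ and the dual projective space $\fixproj^{\,\mathrm{op}}$ are definable in $\fixafr_1^{\,\mathrm{op}}$. Finally, a projective space is definable in terms of its dual — this is classical projective duality, and it is a definitional equivalence — so from $\fixproj^{\,\mathrm{op}}$ we recover $\fixproj$, and then $\hipa$ is recovered as the unique hyperplane all of whose points are nonregular (equivalently, the one carrying no object of $\reghipy_1$), after which $\fixafr_1=\struct{\regbhipy_1,\linesr,\in}$ is read off directly as the structure of regular affine points and regular affine lines not through $\biegun$.

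Chaining these three definability steps — dualize $\fixgras_{n-2}$ to get $\fixgras_1^{\,\mathrm{op}}$; delete isolated objects to get $\fixafr_1^{\,\mathrm{op}}$; apply the dual of \ref{thm:reg2af} and then projective self-duality to recover $\fixproj$ and hence $\fixafr_1$ — yields the theorem, since a composition of definitional reductions is again a definitional reduction. I expect the only genuine subtlety to be bookkeeping about what ``definable'' means under the passage to the opposite structure: one must check that every formula used in the proof of \ref{thm:reg2af} (collinearity of affine points via \ref{lem:reg2collin}, the construction of $\pi(\Delta)$ and $\pi'(\Delta)$, the recovery of the horizon via \ref{lem:reg2horiz}) goes through verbatim with $\inc$ read in the reversed direction, i.e.\ that nothing in that argument secretly used the asymmetry between points and lines beyond what the incidence relation itself provides. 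This is really a meta-mathematical observation rather than a computation: all the notions involved (pencils, triangles, equivalence classes of a ternary relation) are self-dual in the relevant sense, so the transfer is routine, and the main work is simply to say it cleanly.
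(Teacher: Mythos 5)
Your proposal is correct and follows essentially the same route as the paper: the paper likewise observes that $\perp$ is a regularity-preserving correlation giving $\fixgras_1\cong\fixgras_{n-2}^\perp$, then invokes \ref{thm:reg2af} in the dual to define the dual of $\fixaf$, recovers $\fixproj$ there by projective duality, and reads off $\fixafr_1$. Your version merely spells out the bookkeeping (removal of isolated objects, transfer of definability under dualization) that the paper leaves implicit.
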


The case where $n = 4$, i.e. $\dim(\fixproj) = 3$, seems quite interesting. Then
$\hipa$ is a plane and $\biegun\in\hipa$.
In this case the Grassmannian $\fixgras_1$ of regular points and lines  is dual isomorphic to the
Grassmannian $\fixgras_2$ of regular lines and planes, i.e. $\fixgras_1 \cong \fixgras_2^\perp$.
Since $\regafhipy_2^\perp = \regbhipy_2$, we have 
$\fixafr_1^\perp\cong\struct{\regbhipy_2, \reghipy_3, \subset}$. The latter structure
is studied in the next section for arbitrary $n$.

\section{Grassmannians of regular lines and planes}

It is a quite complex, though more or less routine, job to define $\fixafr_1$ 
in terms of $\PencSpace(\reghipy,k)$ or $\GrasSpace(\reghipy,k)$.
In this section we shall discuss one particular case of this problem where $k=2$.
It seems, however, that the techniques used here can be applied
generally. 

Consider the Grassmannian of regular lines and planes
\begin{cmath}
  \fixgras_2 := \GrasSpace(\reghipy,2) = \struct{\reghipy_2,\reghipy_3,\subset}.
\end{cmath}
In view of \ref{prop:reg2highreg} the structure $\fixgras_2$ is definable in $\fixafr_1$.
Note by \ref{fct:isolatedlines} that there are isolated points in $\fixgras_2$
iff $\biegun\in\hipa$. 
When we get rid of these isolated points we get a new structure
\begin{cmath}
  \fixafr_2 := \struct{\regbhipy_2, \reghipy_3, \subset}
\end{cmath}
of the regular  lines not through $\biegun$ and regular planes, a substructure of $\fixgras_2$.
Note that $\fixafr_2$ is the dual of $\fixafr_1$ when $\dim(\fixproj) = 3$.

As we already know the incidence structure $\struct{\reghipy_1,\reghipy_2,\reghipy_3}$
of regular points, lines and planes contains isolated objects: the point $\biegun$,
when $\biegun\notin\hipa$, regular lines on $\hipa$ and regular lines through $\biegun$ when
$\biegun\in\hipa$. So, now we introduce the structure
\begin{cmath}
  \fixafrb_2 := \struct{\linesrb, \reghipy_3, \subset}
\end{cmath}
of the regular affine lines not through $\biegun$ and regular planes, a substructure of $\fixafr_2$.
Note that,  when $\dim(\fixproj) = 3$, we have $\linesrb^\perp=\linesrb$ and thus
$\fixafrb_2$ is the dual of $\fixafrb_1$.

Note an evident (cf. \ref{lem:planeonhipa})
\begin{rem}\label{rem:regplanes:onhor}
  There is no regular plane on $\hipa$.
\end{rem}

This means that the points and lines of $\fixafrb_2$ are respectively lines and
planes of the affine space $\fixaf = \struct{\reghipy_1,\lines}$  obtained from
\fixproj\ by deleting the hyperplane $\hipa$.

Recall a known fact
\begin{fact}\label{fct:gras2pek:proj}
  In the structure 
  $\GrasSpace(\fixproj,2) = \struct{\hipy_2,\hipy_3,\subset}$
  consider a triangle
  with the vertices $L_0,L_1,L_2$ and the sides $A_0,A_1,A_2$ labelled so as
  $L_i\not\subset A_i$ for $i=0,1,2$.
  Then the lines $L_i$ have a common point $p$. Moreover
  \begin{multline}
    \big\{ L\in\hipy_2\colon L\subset A_0 \Land \exists{A\in\hipy_3}\;[L,L_0\subset A] \big\}
    = \big\{ L\in\hipy_2\colon p\in L \subset A_0 \big\} 
    = \\
    =: \pek(p,A_0) \in \peki_2(\fixproj),
  \end{multline}
  where $\peki_2(\fixproj)$ is the set of projective planar pencils of lines of\/ \fixproj.
  Consequently, the relation
  \begin{multline}\label{def:gras2pek:proj}
    \wspolin_{\fixproj}(L_1,L_2,L_3) \iff (\exists{L_0\in\hipy_2})(\exists{A,A_1,A_2,A_3\in\hipy_3})
    \big[ L_0\not\subset A \Land 
    \\
    \bigwedge_{i=1}^{3}( L_i \subset A \Land L_i,L_0 \subset A_i) \big]
  \end{multline}
  defined for arbitrary lines $L_1,L_2,L_3$
  coincides with the collinearity relation in the space $\PencSpace(\fixproj,2)$
  of pencils of lines.
\end{fact}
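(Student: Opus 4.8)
\emph{The plan.} The plan is to work entirely inside the subspace lattice of $\fixproj$, reading the ``points'' of $\GrasSpace(\fixproj,2)$ as lines ($\hipy_2$) and its ``lines'' as planes ($\hipy_3$), incidence being inclusion, and to settle the three assertions in order. For the concurrency: by the definition of a triangle the vertices $L_0,L_1,L_2$ are pairwise distinct, the sides $A_0,A_1,A_2$ are pairwise distinct, and the labelling $L_i\not\subseteq A_i$ forces $L_1,L_2\subseteq A_0$, $L_0,L_2\subseteq A_1$, $L_0,L_1\subseteq A_2$, so each pair among the $L_i$ is coplanar. I would then invoke the standard lemma that three pairwise distinct, pairwise coplanar lines of a projective space are either concurrent or lie in a common plane; it remains to exclude the second possibility. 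If all $L_i$ lay in a plane $B$, then for each pair $L_i,L_j$ — being distinct and coplanar, hence spanning a unique plane $L_i+L_j$ of dimension $3$ contained in both $B$ and in the side $A_k$ not indexed by $i$ or $j$ — one would get $B=L_i+L_j=A_k$, so $A_0=A_1=A_2$, contradicting distinctness of the sides. Hence the $L_i$ share a single point $p$.

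\emph{The pencil identity.} From $p\in L_1\subseteq A_0$ I get $p\in A_0$, and from $p\in L_0$ together with $L_0\not\subseteq A_0$ I get $L_0\cap A_0=\{p\}$, since a line not contained in a plane meets it in at most one point. Now if $L\in\hipy_2$ satisfies $L\subseteq A_0$ and $L,L_0\subseteq A$ for some $A\in\hipy_3$, then $L\neq L_0$, so the distinct coplanar lines $L,L_0$ meet in a point $x$, and $x\in L\subseteq A_0$ with $x\in L_0$ gives $x\in L_0\cap A_0=\{p\}$, i.e. $p\in L$. Conversely, any $L$ with $p\subseteq L\subseteq A_0$ meets $L_0$ at $p$, so $A:=L+L_0\in\hipy_3$ witnesses the existential clause. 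This yields the displayed equality, and $\pek(p,A_0)=\{L\in\hipy_2\colon p\subseteq L\subseteq A_0\}$ is a planar pencil of lines of $\fixproj$ precisely because $p$ is a point incident with the plane $A_0$.

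\emph{The collinearity formula.} Recall that $L_1,L_2,L_3$ are collinear in $\PencSpace(\fixproj,2)$ exactly when there are a point $p$ and a plane $A$ with $p\subseteq L_i\subseteq A$ for $i=1,2,3$. If $\wspolin_{\fixproj}(L_1,L_2,L_3)$ holds with witnesses $L_0,A,A_1,A_2,A_3$, then $L_0\not\subseteq A\supseteq L_i$ gives $L_0\neq L_i$, so the distinct coplanar lines $L_0,L_i\subseteq A_i$ meet in a point $p_i\in L_0\cap A$; since $L_0\cap A$ is a single point, all $p_i$ coincide in one point $p$ with $p\subseteq L_i\subseteq A$, so $L_1,L_2,L_3\in\pek(p,A)$. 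Conversely, given such $p$ and $A$, I choose a point $r\notin A$, set $L_0:=\LineOn(p,r)$ (a line through $p$ not contained in $A$) and $A_i:=L_0+L_i$ (a plane, since $L_0\neq L_i$ and $p\in L_0\cap L_i$); then $(L_0,A,A_1,A_2,A_3)$ witnesses $\wspolin_{\fixproj}(L_1,L_2,L_3)$.

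\emph{Where the care is needed.} The whole thing is lattice bookkeeping; the delicate points are: (i) one must use both clauses of the triangle axiom, since it is the distinctness of the three vertices \emph{and} of the three sides that kills the coplanar alternative in the first step; (ii) the equivalence for $\wspolin_{\fixproj}$ is asserted for arbitrary triples, so one should check — as the two implications above do — that no step secretly assumed $L_1,L_2,L_3$ distinct, whence degenerate configurations are harmless; (iii) the choice of $r\notin A$ in the converse of the last part requires $\dim(\fixproj)\geq 3$, i.e. $\dim(\fixV)\geq 4$, which is the standing hypothesis whenever the Grassmannian of regular lines and planes is considered. This last point is the only place the ambient dimension enters, and it is the ``main obstacle'' only in the sense of being an easily overlooked assumption rather than a genuine difficulty.
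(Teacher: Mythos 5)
Your proof is correct. Note, however, that the paper itself offers no proof of this statement: it is introduced with ``Recall a known fact'' and treated as standard projective geometry, so there is nothing to compare your argument against line by line. What you have written is the expected verification and it is sound: the distinctness of both the vertices and the sides is indeed what rules out the coplanar alternative in the concurrency step; the pencil identity and the two implications for $\wspolin_{\fixproj}$ are handled correctly (in particular you rightly observe that $L\neq L_0$ is automatic from $L\subseteq A_0$, $L_0\not\subseteq A_0$, and that $L_0\cap A$ is a single point because it is nonempty and $L_0\not\subseteq A$); and your caveat that the converse of the collinearity claim needs a point off the plane $A$, hence $\dim(\fixV)\geq 4$, is a legitimate observation that matches the standing hypotheses in the sections where the fact is applied.
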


The following is evident by \ref{fct:plane:srodek} and inspection of possible cases.
\begin{lem}\label{lem:pencils}
  Let $A\in\reghipy_3$ and $p$ be a point on $A$. Then $\srodek(A)$ is an affine point and
  \begin{align*}
    \pek(p,A)\cap\regbhipy_2 &= 
    \begin{cases}
      \emptyset,                              & \text{when } p = \srodek(A), \\
      \pek(p,A)\setminus\set{\LineOn(p, \srodek(A))}, & \text{when } p\neq\srodek(A). \\
    \end{cases}
    \\  
    \pek(p,A)\cap\linesrb &= 
    \begin{cases}
      \emptyset,                              & \text{when } p = \srodek(A), \\
      \pek(p,A)\setminus\set{\LineOn(p, \srodek(A))}, & \text{when } p\neq\srodek(A) \text{ and } p\notin A^\infty, \\
      \pek(p,A)\setminus\set{\LineOn(p, \srodek(A)), A^\infty}, & \text{when } p\in A^\infty, \\
    \end{cases}
  \end{align*}
  Consequently, if $p\notin A^\infty$ (or equivalently $p\notin\hipa$), 
  then $\pek(p,A)\cap\linesr = \pek(p,A)\cap\reghipy_2$.
  Moreover, $\pek(p,A)\cap\reghipy_2 = \pek(p,A)\cap\regbhipy_2$ 
  and $\pek(p,A)\cap\linesr = \pek(p,A)\cap\linesrb$ as $\biegun\notin A^\infty$.
\end{lem}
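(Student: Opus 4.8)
The plan is to read both formulas off the internal structure of a regular plane, which \ref{fct:plane:srodek} and \ref{fct:classif:planes} describe completely; the only genuine work is to check that the four line-classes $\reghipy_2$, $\regbhipy_2$, $\linesr$, $\linesrb$ have the same trace on the pencil $\pek(p,A)$, which amounts to locating $\biegun$.

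First I would fix the geometry inside $A$. By \ref{rem:regplanes:onhor}, $A\not\subset\hipa$, so $A^\infty=A\cap\hipa$ is the unique line of $A$ contained in $\hipa$, and $A$ is a plane of the affine space $\fixaf$. Since $A$ is regular, $\Rad(A)=\emptyset$, hence $\rdim(A)=0$ and we are in the case ``$L$ regular'' of \ref{fct:classif:planes} (the case ``$L$ nonregular'' would make $A$ nonregular): thus $A^\infty$ is itself a regular line, $\srodek(A)=\Hrd(A)$ is an affine point of $A$, and an affine line $K\subset A$ is nonregular iff $\srodek(A)\in K$; by \ref{fct:plane:srodek} these nonregular lines form a proper pencil with vertex $\srodek(A)$. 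Since $\srodek(A)$ is affine it lies off $\hipa$, hence off $A^\infty$, so every line of $A$ through $\srodek(A)$ is affine and therefore nonregular.

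Next I would locate $\biegun$. If $\biegun\in\hipa$ then $\biegun\in A$ would force $\biegun\in A^\infty$, contradicting \ref{lem:regul3biegun}; so $\biegun\notin A$. If $\biegun\notin\hipa$ and $\biegun\in A$, then $\biegun$ is an affine point of $A$ through which, by \ref{lem:regul3biegun}, every affine line is nonregular; since through an affine point other than $\srodek(A)$ there passes only one nonregular line, this forces $\biegun=\srodek(A)$. In every case $\biegun\notin A^\infty$, and no regular line of $A$ contains $\biegun$ (either $\biegun\notin A$, or $\biegun=\srodek(A)$ lies on nothing but nonregular lines of $A$). Hence $\reghipy_2$ and $\regbhipy_2$, and likewise $\linesr=\regafhipy_2$ and $\linesrb=\regafhipy_2\cap\regbhipy_2$, have the same trace on $\pek(p,A)$ --- this is the ``moreover'' clause.

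It then remains to compute $\pek(p,A)\cap\reghipy_2$ by inspection and to delete $A^\infty$ where needed. A line $K\in\pek(p,A)$ is nonregular iff $K$ is affine with $\srodek(A)\in K$. If $p=\srodek(A)$, every line of $\pek(p,A)$ is such (all being affine, since $A^\infty$ misses $\srodek(A)$), so $\pek(p,A)\cap\reghipy_2=\emptyset$. If $p\neq\srodek(A)$, the only such line is $\LineOn(p,{\srodek(A)})$ (affine, as $\srodek(A)\notin\hipa$), so $\pek(p,A)\cap\reghipy_2=\pek(p,A)\setminus\set{\LineOn(p,{\srodek(A)})}$; by the previous paragraph this is also the value of $\pek(p,A)\cap\regbhipy_2$, which is the first displayed formula. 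Passing to $\linesr=\regafhipy_2$ deletes the line $A^\infty$ exactly when $A^\infty\in\pek(p,A)$, i.e. exactly when $p\in A^\infty$ (equivalently $p\in\hipa$, since $p\in A$); together with the previous paragraph this gives the three-line formula for $\pek(p,A)\cap\linesrb$ and the identity $\pek(p,A)\cap\linesr=\pek(p,A)\cap\reghipy_2$ when $p\notin A^\infty$. The only step that is not immediate --- the ``main obstacle'' such as it is --- is ruling out a regular line of $A$ through $\biegun$, which rests on the mildly surprising fact that $\biegun$, whenever it lies on $A$, must coincide with $\srodek(A)$. The rest is the ``inspection of possible cases'' announced in the statement and, unlike several earlier arguments in this section, does not use the standing hypothesis that lines have at least six points.
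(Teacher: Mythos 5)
Your argument is correct and is exactly the "inspection of possible cases" that the paper declares evident from \ref{fct:plane:srodek} (the paper supplies no written proof beyond that remark), so the two approaches coincide. The one step you rightly make explicit, and which the paper's terse justification ``as $\biegun\notin A^\infty$'' glosses over, is that $\biegun$ either misses the regular plane $A$ altogether or coincides with $\srodek(A)$, so that no regular line of $A$ ever passes through $\biegun$; this is needed for the ``moreover'' clause and your derivation of it from \ref{lem:regul3biegun} is sound.
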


\begin{lem}\label{lem:reg2peki}
  Let the relation $\wspolin_{\fixafrb_2}$ be defined by the formula 
  \eqref{def:gras2pek:proj} with $\hipy_2$ replaced by $\linesrb$, 
  $\hipy_3$ replaced by $\reghipy_3$,  and let $L_1,L_2,L_3 \in \linesrb$.
  Then the relation $\wspolin_{\fixafrb_2}(L_1,L_2,L_3)$ holds iff 
  $L_1,L_2,L_3\in\pek(p,A)$ for some point $p$ and some $A\in\reghipy_3$.
\end{lem}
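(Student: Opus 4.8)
\textbf{Proof proposal for Lemma \ref{lem:reg2peki}.}

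The plan is to reduce the statement to Fact \ref{fct:gras2pek:proj} by carefully controlling which objects of the ambient projective Grassmannian $\GrasSpace(\fixproj,2)$ survive when we restrict to $\linesrb$ and $\reghipy_3$. The ``if'' direction is the easy half: if $L_1,L_2,L_3\in\pek(p,A)$ with $A\in\reghipy_3$ and $L_i\in\linesrb$, then $p\notin A^\infty$ (since $p\in L_i\in\linesrb$ forces $p$ to be an affine point, as $L_i$ is an affine line not through $\biegun$), and by Lemma \ref{lem:pencils} all but at most one line of $\pek(p,A)$ lies in $\linesrb$. So I can pick an auxiliary line $L_0\in\pek(p,A)\cap\linesrb$ distinct from $L_1,L_2,L_3$ (there are at least $6$ lines in a projective pencil, so room to spare), and for each $i$ take $A_i := L_i + L_0$, a plane through $p$. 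The only thing to check is that these witnessing planes $A_i$ can be chosen in $\reghipy_3$, not merely in $\hipy_3$; this is where I must be slightly careful, since $L_i + L_0$ need not a priori be regular even though $L_i,L_0$ are. I would handle this by choosing $L_0$ generically within $\pek(p,A)\cap\linesrb$ so that, for each $i$, the plane $L_i+L_0$ avoids the relevant nonregularity obstruction — concretely, using Lemma \ref{lem:reg:plane} (a plane off $\hipa$ is regular iff its line at infinity is regular in the symplectic geometry on $\hipa$), it suffices that the line $(L_i+L_0)^\infty = L_i^\infty + L_0^\infty$ be a regular symplectic line. Since $L_0^\infty$ ranges over the punctured pencil $\pek(A^\infty\cap\dots)$ as $L_0$ ranges over $\pek(p,A)$, and the nonregular lines through a fixed point in a symplectic space form a hyperplane's worth of directions, a counting argument (again using lines of size $\ge 6$) yields an $L_0$ making all three $A_i$ regular simultaneously. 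The ambient Fact \ref{fct:gras2pek:proj} then says $\wspolin_{\fixproj}(L_1,L_2,L_3)$ holds with these witnesses, and since all witnesses ($L_0$, $A_1,A_2,A_3$) lie in $\linesrb\cup\reghipy_3$, the restricted relation $\wspolin_{\fixafrb_2}(L_1,L_2,L_3)$ holds too.

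For the ``only if'' direction, suppose $\wspolin_{\fixafrb_2}(L_1,L_2,L_3)$ holds, with witnesses $L_0\in\linesrb$ and $A,A_1,A_2,A_3\in\reghipy_3$ satisfying $L_0\not\subset A$ and $L_i,L_0\subset A_i\subset A$ — wait, re-reading formula \eqref{def:gras2pek:proj}, the condition is $L_i\subset A$ and $L_i,L_0\subset A_i$ for $i=1,2,3$, with $L_0\not\subset A$. Since these are exactly the hypotheses of Fact \ref{fct:gras2pek:proj} (just with the objects happening to be regular), that fact immediately gives a common point $p$ of $L_1,L_2,L_3$ and tells us $L_1,L_2,L_3\in\pek(p,A)$ where $A$ is the plane appearing among the witnesses. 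It remains only to observe that $A\in\reghipy_3$ by assumption, so this is a \emph{regular} plane, which is exactly what the lemma claims. Thus the restricted relation is contained in the relation ``$L_1,L_2,L_3$ lie in a common regular pencil $\pek(p,A)$ with $A$ regular'', and combined with the first half we get equality.

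The genuine obstacle is the regularity bookkeeping in the ``if'' direction: ensuring that a witnessing auxiliary line $L_0$ can be chosen so that all three planes $L_i+L_0$ are simultaneously regular and the auxiliary line itself lies in $\linesrb$. I expect this to come down to a short argument in the symplectic geometry on $\hipa$: the directions $L_0^\infty$ to be avoided (for each $i$, those making $L_i^\infty+L_0^\infty$ isotropic, plus $\biegun$ itself, plus $\srodek(A)$) are finitely many ``bad'' points on the line $A^\infty$, in fact at most $4$ or $5$ of them, so with lines of size $\ge 6$ a good $L_0$ exists. One should also double-check the degenerate case where some $L_i^\infty$ coincides with $\srodek(A)$, but $L_i\in\linesrb$ excludes $L_i$ from being the nonregular line of $\pek(p,A)$, so $L_i^\infty\ne\srodek(A)$ automatically. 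Everything else is a direct transcription of Fact \ref{fct:gras2pek:proj} into the regular setting via Lemmas \ref{lem:reg:plane}, \ref{lem:pencils} and Remark \ref{rem:regplanes:onhor}.
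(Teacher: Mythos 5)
Your easy direction ($\wspolin_{\fixafrb_2}(L_1,L_2,L_3)$ implies membership in a common pencil with regular carrier plane) is fine and matches the paper. The hard direction, however, fails at the choice of witnesses. Formula \eqref{def:gras2pek:proj} demands an auxiliary line $L_0$ with $L_0\not\subset A$: the witness is a transversal through the common point $p$ lying \emph{off} the plane $A$, and the three planes $A_i$ are the planes $L_i+L_0$ spanned with that transversal. You instead take $L_0\in\pek(p,A)\cap\linesrb$, i.e.\ $L_0\subset A$. This violates the clause $L_0\not\subset A$ outright, and moreover makes every $A_i=L_i+L_0$ coincide with $A$ itself, so your subsequent discussion of when $(L_i+L_0)^\infty=L_i^\infty+L_0^\infty$ is a regular symplectic line is vacuous (that span is just $A^\infty$). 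The genuine content of the lemma --- and the bulk of the paper's proof --- is precisely the construction of a line $L_0\in\linesrb$ through $p$, \emph{not} contained in $A$, such that all three planes $L_0+L_i$ are simultaneously regular; this requires a careful argument inside the hyperplane $\hipa$ (finding an improper point $y$ with $y\not\perp p$ and $y\not\perp q_i$ for the improper points $q_i$ of the $L_i$, with $y\neq\biegun$ and $y\notin A$), and none of it appears in your proposal.

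There is a second, independent gap: you assert that $p\in L_i\in\linesrb$ forces $p$ to be an affine point. That is false --- an affine line has exactly one improper point, and the vertex of the pencil may well be that improper point, i.e.\ $\pek(p,A)$ may be a parallel pencil with $p\in A^\infty$. The paper must, and does, treat this case separately: when $p\in A^\infty$ it passes to a plane $D\subset\hipa$ containing $A^\infty$, uses \ref{lem:planeonhipa} to locate the radical of $D$, and then picks $L_0$ through $p$ in the $3$-space $A+D$ avoiding finitely many bad planes. Your proof covers neither the affine-vertex case (because of the wrong witness) nor the improper-vertex case (because you exclude it by an incorrect claim).
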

\begin{proof}
  \ltor
  Straightforward by \ref{fct:gras2pek:proj}.
  
  \rtol
  Let $A\in\reghipy_3$, $p$ be a point on $A$, and $L_1,L_2,L_3\in\linesrb$ be three lines
  on $A$ through $p$. Set $M := A^\infty$; then $M\in\reghipy_2$. 
  To close the proof we need to find a line $L_0\in\linesrb$ through $p$ 
  but not on $A$ such that
  the planes $A_1 = L_0+L_1$, $A_2 = L_0 + L_2$ and $A_3 = L_0 + L_3$ are all regular.
  There are two cases to consider.
  \begin{sentences}\itemsep-2pt
  \item
    $p$ is an affine point i.e. $p\notin M$.\par
    Note that $M \cap M^\perp = \emptyset$.
    Write $q_i := M \cap L_i$ for $i =1,2,3$.
    Since the $L_i$ are regular, $q_i\notin p^\perp$ and thus  $p\not\perp M$.
    Therefore,
    the intersection $M\cap p^\perp$ is a single point $q_0\neq q_1,q_2,q_3$.
    \par
    Take $x \in M$ with $x\neq q_i$ for $i=0,\dots, 3$ and $y\in M^\perp\cap\hipa + x$
    with $y\neq x$ and $y\notin M^\perp$.
    Then $y\neq\biegun$ and $y \in x + m = \LineOn(x,m)$ 
    for some (uniquely determined by $y$) point $m\in M^\perp$.
    We have $y \notin M$, since otherwise $M = \LineOn(x,y)\ni m$, so $m \in \Rad(M)$.
    \par
    Suppose that $y \in q_i^\perp$ for some $i=1,2,3$.
    Evidently, $q_i \perp M^\perp$ and $q_i\perp q_i$; comparing dimensions we get 
    $q_i^\perp = q_i + M^\perp$. From $y \in q_i^\perp$ we get that 
    $y \in q_i + m_i$ for some $m_i \in M^\perp$. Since $M$ and $M^\perp$ are skew,
    we get $x = q_i$, which is impossible.
    \par
    Suppose that $y \in p^\perp$ and write $K = y + q_0$; then $K \subset p^\perp$.
    For each $i=1,2,3$ the line $K$ crosses $q_i + M^\perp$ in a point $y_i$
    and $y_i \neq y$. Note: $y_i \perp p$.
    There are $m_i \in M^\perp$ such that $y_i \in q_i + m_i$;
    it is seen that $m_i = m$, as otherwise the lines 
    $M$ and $\LineOn(m,m_i)\subset M^\perp$
    are contained in the plane $M + K$ and thus they have a common point.
    Consider another point $y'$ on $m + x$. If there were $y'\in p^\perp$
    we would obtain another point $y'_1$ on $m + q_1$ with $p\perp y'_1$.
    This leads, contradictory, to $q_i\perp p$.
    Without loss of generality we can assume that $y \notin p^\perp$.
    \par
    Take $L_0 := \LineOn(p,y)$; then $L_0\in\linesrb$ as $p$ is an affine point.
    Take $A_i := L_0 + L_i$ for $i =1,2,3$. Then $A_i^\infty = \LineOn(q_i,y)$.
    Since $q_i \not\perp y$, the lines $A_i^\infty$ are regular and thus 
    $A_i\in\reghipy_3$.
  \item
    $p\in M$, $M\neq L_1,L_2,L_3$ i.e. all the $L_i$ are affine lines. 
    \par
    By assumption, $M$ is regular.
    Let $D$ be a plane on $\hipa$ containing the line $M$. 
    By \ref{lem:planeonhipa}  the radical of $D$ is a point, say $u$. The line $L := \LineOn(p,u)$
    is nonregular. Now, in the three space $A+D$ we take a line $L_0$ through $p$ 
    that is not contained in any of the planes $L_i+L$ and is not contained in $p^\perp$.
    This is doable thanks to assumption that there are at least 5 
    lines distinct from $M$ through $p$ on every plane
    containing $M$. 
    Note that $L_0$ is regular by \ref{lem:reg:line}.
    \\
    Let $A_i := L_i+L_0$.
    By \ref{lem:reg:plane} each of the planes $A_i$ is regular as it meets $D$ 
    in a line distinct from $L$ thus, in a regular one.
  \end{sentences}  
  This completes the reasoning.
\end{proof}

\begin{cor}\label{cor:reg2peki}
  Let the relation $\wspolin_{\fixafr_2}$ be defined by the formula 
  \eqref{def:gras2pek:proj} with $\hipy_2$,  $\hipy_3$  replaced by $\regbhipy_2$, 
  $\reghipy_3$ respectively  and let $L_1,L_2,L_3 \in \regbhipy_2$.
  The relation $\wspolin_{\fixafr_2}(L_1,L_2,L_3)$ holds iff 
  $L_1,L_2,L_3\in\pek(p,A)$ for some regular point $p$ and some $A\in\reghipy_3$.
\end{cor}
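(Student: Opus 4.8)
The plan is to bootstrap from the analogue already established for $\fixafrb_2$, Lemma~\ref{lem:reg2peki}, together with the projective dictionary of Fact~\ref{fct:gras2pek:proj} and the pencil count of Lemma~\ref{lem:pencils}. The only new feature relative to Lemma~\ref{lem:reg2peki} is that the point set $\regbhipy_2$ of $\fixafr_2$ properly extends the point set $\linesrb$ of $\fixafrb_2$, as it also contains the regular lines lying on $\hipa$ and missing $\biegun$. By Remark~\ref{rem:regplanes:onhor} no regular plane lies on $\hipa$, so such a line, whenever it belongs to a regular pencil $\pek(p,A)$ with $A\in\reghipy_3$, must be the improper line $A^\infty$ of the host plane $A$; this keeps the extra bookkeeping short.

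Consider first the implication $(\Leftarrow)$. Assume $L_1,L_2,L_3\in\pek(p,A)$ with $A\in\reghipy_3$ and $L_1,L_2,L_3\in\regbhipy_2$, where $p$ is a regular point; then $p\notin\hipa$, hence $p\notin A^\infty$. By Lemma~\ref{lem:pencils} this yields $p\neq\srodek(A)$ and $\pek(p,A)\cap\regbhipy_2=\pek(p,A)\cap\linesrb$, so in fact $L_1,L_2,L_3\in\linesrb$. Lemma~\ref{lem:reg2peki} now furnishes a tuple $(L_0;A,A_1,A_2,A_3)$ with $L_0\in\linesrb$ and $A,A_1,A_2,A_3\in\reghipy_3$ witnessing $\wspolin_{\fixafrb_2}(L_1,L_2,L_3)$; since $\linesrb\subseteq\regbhipy_2$, the same tuple witnesses $\wspolin_{\fixafr_2}(L_1,L_2,L_3)$.

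For $(\Rightarrow)$, a witness $(L_0;A,A_1,A_2,A_3)$ of $\wspolin_{\fixafr_2}(L_1,L_2,L_3)$ lives inside $\regbhipy_2\subseteq\hipy_2$ and $\reghipy_3\subseteq\hipy_3$, hence is \emph{a fortiori} a witness of $\wspolin_{\fixproj}(L_1,L_2,L_3)$. Fact~\ref{fct:gras2pek:proj} then gives a common point $p$ of $L_1,L_2,L_3$ with $L_1,L_2,L_3\in\pek(p,A)$, $A\in\reghipy_3$. It remains to see that $p$ may be taken regular. Lemma~\ref{lem:pencils} excludes $p=\srodek(A)$, since $\pek(p,A)\cap\regbhipy_2\neq\emptyset$. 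Suppose $p\in\hipa$, i.e. $p\in A^\infty$. Each $L_i$ meets $L_0$ inside the plane $A_i$, yet $L_0\not\subset A\ni L_1,L_2,L_3$; the usual pencil count then forces $L_0$ through $p$ as well, so $L_0,L_1,L_2,L_3$ are regular lines at $p$ with $A_i=L_0+L_i$. As $A$ and each $A_i$ is regular and, by Remark~\ref{rem:regplanes:onhor}, not contained in $\hipa$, each of $A^\infty$ and the $A_i^\infty$ is a regular line through $p$ in the symplectic geometry on $\hipa$. Pushing this through the classification of affine planes on a nonregular line (Lemma~\ref{lem:planes:przez:line}) and of regular planes (Fact~\ref{fct:classif:planes}) produces a contradiction, so $p\notin\hipa$, that is, $p$ is regular.

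The step I expect to be the main obstacle is exactly this last one --- ruling out that the common vertex lies on the horizon $\hipa$ --- since here one can neither transcribe the projective picture of Fact~\ref{fct:gras2pek:proj} nor quote Lemma~\ref{lem:reg2peki} verbatim, and one must exploit the asymmetry between affine and isotropic lines through $p$ by way of the plane classifications. Everything else is a routine transfer of the projective configuration and of the pencil enumeration of Lemma~\ref{lem:pencils}.
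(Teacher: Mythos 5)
Your reading of the statement is where things go wrong. You take ``for some regular point $p$'' literally, i.e.\ $p\notin\hipa$, and in the $(\Rightarrow)$ direction you try to show that a common vertex $p\in\hipa$ leads to a contradiction. No such contradiction exists: the paper's own proof of this corollary spends almost all of its effort on exactly the opposite task, namely \emph{constructing} witnesses of $\wspolin_{\fixafr_2}(L_1,L_2,L_3)$ when the vertex $p$ lies on $\hipa$ (its cases (ii) and (iii)). Concretely, take $A\in\reghipy_3$, $M=A^\infty$ (a regular line of $\hipa$ missing $\biegun$, cf.\ \ref{lem:pencils}), a point $p\in M$ with $\pek(p,A)\cap\regbhipy_2\neq\emptyset$, and let $L_1=M$ while $L_2,L_3$ are regular affine lines of $\pek(p,A)\cap\regbhipy_2$; case (iii) of the paper's proof produces a line $L_0$ and regular planes $A_1,A_2,A_3$ witnessing $\wspolin_{\fixafr_2}(L_1,L_2,L_3)$, yet the only common point of $L_1,L_2,L_3$ is $p\in\hipa$. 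So the version you are proving is incompatible with the paper; the intended statement (consistent with Lemma \ref{lem:reg2peki}, with the proof given, and with the subsequent description of the equivalence classes of $\wspolin_{\fixafr_2}$ as all nonempty $\pencr(p,A)$ with $p\in A\in\reghipy_3$) reads ``for some point $p$''. Your decisive step --- that pushing through \ref{lem:planes:przez:line} and \ref{fct:classif:planes} ``produces a contradiction'' --- is in any case only an announcement, not an argument, and it cannot be completed.

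Read correctly, your $(\Leftarrow)$ direction is also incomplete: the reduction to Lemma \ref{lem:reg2peki} via \ref{lem:pencils} handles only vertices $p\notin\hipa$, which is the easy case. The new content of the corollary over Lemma \ref{lem:reg2peki} is precisely the case $p\in A^\infty$, and in particular the subcase where one of the three lines \emph{is} the improper line $A^\infty$ (a point of $\fixafr_2$ that is not a point of $\fixafrb_2$, so the lemma cannot be quoted). There one must build by hand a regular line $L_0$ through $p$, not on $A$, such that all three planes $L_0+L_i$ are regular --- the construction with the planes $D$, $Y=D+A$, $A_0=K_1+K_2$ and the auxiliary lines $M_3,K_0,K_4$ that occupies most of the paper's proof. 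That construction, which is the actual substance of the corollary, is entirely absent from your proposal.
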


\begin{proof}
  \ltor
  Clear by \ref{fct:gras2pek:proj}.
    
  \rtol
  Let $A\in\reghipy_3$, $p$ be a point on $A$, and $L_1,L_2,L_3\in\regbhipy_2$ be three lines
  on $A$ through $p$. Set $M := A^\infty$; then $M\in\regbhipy_2$. 
  We have three cases
  \begin{sentences}\itemsep-2pt
  \item\label{cor:reg2peki:i}
    $p$ is an affine point i.e. $p\notin M$,
  \item\label{cor:reg2peki:ii}
    $p\in M$, $M\neq L_1,L_2,L_3$ i.e. all the $L_i$ are affine lines,
  \item\label{cor:reg2peki:iii}
    $p\in M = L_i$ for some $i\in\set{1,2,3}$. 
  \end{sentences}
  Cases \eqref{cor:reg2peki:i} and \eqref{cor:reg2peki:ii} follow directly 
  by \ref{lem:reg2peki} (though in case \eqref{cor:reg2peki:ii} there is
  a simple independent proof here that do not require 6 lines in a projective pencil).
  In the remaining case \eqref{cor:reg2peki:iii} without loss of generality we take $i=1$.
    Let $D$ be a plane through $M$ contained in $\hipa$. By \ref{lem:planeonhipa},
    since $D$ contains a regular
    line $M$, $\Rad(D)$ is a point $q$.
    If there was $p = q$ we would have 
    $p \perp D$, which gives, contradictory,  $p\in\Rad(L_1)$. 
    The unique nonregular line
    through $p$ on $A$ is $K_1:= \LineOn(p,{\srodek(A)})$
    and the unique nonregular line through $p$ on $D$ is $K_2:=\LineOn(p,q)$.
    Write $A_0:= K_1 + K_2$; since $p\perp K_1,K_2$ we have $p \perp A_0$.
    Set $Y := D + A$; then $Y \in\hipy_4$.
    Note that $\rdim(Y)\le 1$; if $\Rad(Y)\neq \emptyset$ then $\Rad(Y) = q$.
    Since, 
    either $Y \subset p^\perp$, which gives,
    contradictory, $p\perp L_1$, or $Y\cap p^\perp$ is a plane, 
    we obtain $Y\cap p^\perp = A_0$.
    \par
    Take $M_3\subset D$ with $p \in M_3 \neq K_2,M$; then $M_3$ is regular.
    For $A_3 = L_3 + M_3$ we have $A_3 \in \reghipy_3$, because $A_3^\infty = M_3$.
    From $p \in A_3,A_0 \subset Y$ we get that $K_0 := A_3 \cap A_0$ is a line.
    \par
    Consider the plane $B = L_2 + K_2$; then $B$ and $A_3$ have a common line $K_4$.
    Let $L_0$ be a line in $\pek(p,A_3)$ distinct from $L_3,K_4,K_0,M_3$.
    Then $L_0\not\subset A_0$ and thus $L_0$ is regular.
    Set $A_2 = L_0 + L_2$; then $p \in A_2^\infty$ and 
    $A_2^\infty = A_2 \cap \hipa = A_2 \cap D \neq K_2$, which gives that $A_2$ is regular.
    Finally, we take $A_1 = L_0 + L_1$. Then $A_1^\infty = M$ and thus 
    $A_1\in\reghipy_3$.
\end{proof}

The family of equivalence classes of the relation $\wspolin_{\fixafrb_2}$ is the set
\begin{cmath}
  \big\{ \pencraf(p,A)
  \colon p \in A \in \reghipy_3\big\} \setminus \big\{ \emptyset \big\},
  \quad\text{where}\quad
  \pencraf(p,A) = \set{L\in\linesrb \colon p \in L \subset A}.
\end{cmath}
So, we get the space of affine regular pencils $\AfPencSpace(\reghipy,2)$ with regular affine lines
not through $\biegun$,
i.e. elements of $\linesrb$, as points and affine regular pencils $\pencraf(p,A)$ as lines.
Note that among pencils $\pencraf(p,A)$ we have proper pencils, those with $p\notin\hipa$,
and parallel pencils, those with $p\in\hipa$.

The family of equivalence classes of the relation $\wspolin_{\fixafr_2}$, is the set
\begin{cmath}
  \big\{ \pencr(p,A)
  \colon p \in A \in \reghipy_3\big\} \setminus \big\{ \emptyset \big\},
  \quad\text{where}\quad
  \pencr(p,A) = \set{L\in\regbhipy_2 \colon p \in L \subset A},
\end{cmath}
and we have the space of regular pencils $\PencSpace(\reghipy,2)$ with regular lines
not through $\biegun$ as points and regular pencils $\pencr(p,A)$ as lines. 
The space of affine regular pencils $\AfPencSpace(\reghipy,2)$ is a substructure of
the space of regular pencils $\PencSpace(\reghipy,2)$ in the sense that points of
$\AfPencSpace(\reghipy,2)$ are points of $\PencSpace(\reghipy,2)$ and, in view
of \ref{lem:pencils}, lines of $\PencSpace(\reghipy,2)$ are a bit "richer".

Loosely speaking, we have proved that the family of regular pencils is definable
in both $\fixafrb_2$ and in $\fixafr_2$ (cf. \cite{grasregul}).

Recall that \ref{lem:regul3biegun} and \ref{lem:reg:plane} together say the following.
\begin{fact}\label{fct:regplanes:przezb}
  Let\/ $\biegun\in\hipa$.
  If $A$ is a plane not contained in\/ $\hipa$ and $\biegun\in A$, then 
  $A$ is not regular (comp. {\upshape \ref{rem:regplanes:onhor}}).
  Consequently, no regular pencil exists that contains a line through
  $\biegun\in\hipa$.
\end{fact}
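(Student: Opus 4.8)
The plan is to prove the two assertions in sequence, using the regularity criteria for planes already established. First I would recall that the ambient polarity $\perp$ restricted to $\hipa$ is symplectic with $\Rad(\hipa) = \biegun$ (this is exactly the situation recorded in subs.~\ref{subsubsec:b0H}), so in particular $\biegun^\perp = \hipa$: the pole $\biegun$ is conjugate to every point of $\hipa$. This single identity is the engine of the whole argument.

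For the first assertion, let $A$ be a plane with $A\not\subset\hipa$ and $\biegun\in A$. Set $L := A^\infty = A\cap\hipa$; this is a line, and since $\biegun\in A\cap\hipa$ we get $\biegun\in L$. By \ref{lem:reg:plane}, $A$ is regular iff the line $L$ is regular in the symplectic geometry induced on $\hipa$. But $\biegun = \Rad(\hipa)$, so $\biegun\perp L$, hence $\biegun\in\Rad(L)$ and $L$ is not regular; equivalently, one may invoke \ref{lem:regul3biegun}, which states directly that no line on $\hipa$ through $\biegun$ is regular. Therefore $A$ is not regular. The parenthetical comparison with \ref{rem:regplanes:onhor} just notes that the remaining case $A\subset\hipa$ is already covered there, so in total no regular plane contains $\biegun$.

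For the consequence, suppose toward a contradiction that some regular pencil $\pek(H,B)$ contains a line $L$ with $\biegun\in L$. By the definition \eqref{eq:pencil} of a regular pencil, $\sub = \reghipy$ and in particular $B\in\reghipy_3$ with $L\subset B$; thus $\biegun\in L\subset B$, so $B$ is a regular plane containing $\biegun$ — contradicting the first part. (If $B\subset\hipa$ one uses \ref{rem:regplanes:onhor} instead; if $B\not\subset\hipa$ one uses the first assertion.) Hence no such regular pencil exists.

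I do not expect a serious obstacle here: the statement is explicitly flagged in the text as an immediate corollary of \ref{lem:regul3biegun} and \ref{lem:reg:plane}. The only point requiring a moment's care is making sure the pole identity $\biegun^\perp = \hipa$ (equivalently $\biegun = \Rad(\hipa)$) is quoted from the correct subsection, since it is precisely the hypothesis $\biegun\in\hipa$ that forces $\Rad(\hipa)=\biegun$ and thereby kills regularity of every line and plane through $\biegun$ meeting $\hipa$ in such a line.
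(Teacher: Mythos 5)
Your proof is correct and follows exactly the route the paper intends: the paper gives no separate proof but introduces the fact with ``Recall that \ref{lem:regul3biegun} and \ref{lem:reg:plane} together say the following,'' and your argument is precisely the combination of those two lemmas (plus \ref{rem:regplanes:onhor} for the pencil consequence), with the key identity $\Rad(\hipa)=\biegun$ correctly sourced from subsection~\ref{subsubsec:b0H}.
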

This means that there are no pencils, neither in $\AfPencSpace(\reghipy,2)$ nor in 
$\PencSpace(\reghipy,2)$, with a vertex $\biegun$ when $\biegun\in\hipa$, though
there are regular affine lines through $\biegun$. 

For points $L_1, L_2, L_3$ of a point-line geometry $X$ we write
\begin{equation}
  \textstyle
  \TRG_X(L_1,L_2,L_3) \iff L_1,L_2,L_3 \text{ are the vertices of a triangle in } X.
\end{equation}
From common projective geometry (cf. \ref{fct:gras2pek:proj}) it follows that
if $\TRG_X(L_1,L_2,L_3)$ holds and $X$ is $\fixgras_2$, $\fixafr_2$ or $\fixafrb_2$  and $p\in L_1,L_2$,
then $p\in L_3$ as well.

We are going to identify points of $\fixafr_1$ with stars of lines in $\fixafr_2$ 
as well as with stars of lines in $\fixafrb_2$. 
The star of  regular lines through a point $p$ is the set
\begin{cmath}
  \starofr(p) = \left\{ L\in\reghipy_2\colon p\in L \right\}
\end{cmath}
and the star of regular affine lines through a point $p$ is
\begin{cmath}
  \starofraf(p) = \left\{ L\in\linesr\colon p\in L \right\}.
\end{cmath}
Note that 
$$\starofr(p) = 
  \begin{cases}
    \starofraf(p), & p\notin\hipa,\\
    \starofraf(p)\cup\set{L\in\reghipy_2\colon p\in L\subset\hipa}, & p\in\hipa.
  \end{cases}
$$
To express the notion of a star of lines purely in terms of the geometry
of $\fixafr_2$ or $\fixafrb_2$ for a given pencil ${\cal Q}= \pencr(a,A)\neq\emptyset$ 
(and respectively for ${\cal Q}= \pencraf(a,A)\neq\emptyset$) we write 
\begin{align*}
  \starof_\Delta({\cal Q})  & := \big\{ L\in\reghipy_2\colon (\exists{L_1,L_2\in{\cal Q}})\;\TRG(L,L_1,L_2) \big\}, \\
  \starof_{\rm L}({\cal Q}) & := \big\{ L\in\reghipy_2\colon (\exists{L',L''\in\starof_\Delta({\cal Q})})\;
                                    \bigl[L' \neq L'' \Land  \wspolin(L,L',L'')\bigr] \big\}, \\
  \starof({\cal Q})         & := {\cal Q} \cup \starof_\Delta({\cal Q}) \cup \starof_{\rm L}({\cal Q}),
\end{align*}
where $\TRG = \TRGafr$ and $\wspolin = \wspolin_{\fixafr_2}$ 
(or respectively $\TRG = \TRGafrb$ and $\wspolin = \wspolin_{\fixafrb_2}$).
The set $\starof_\Delta({\cal Q})$ contains those lines $L$ with $L^\infty\not\perp A^\infty$ while
we need $\starof_{\rm L}({\cal Q})$ for the other lines $L$ with $L^\infty\perp A^\infty$.
It will become more apparent later in \ref{lem:pek2star:prop} and \ref{cor:pek2star:prop}.

Let $\adjac$ be the binary collinearity in $\fixgras_2$, i.e. for 
$L_1,L_2\in\reghipy_2$ we write
\begin{ctext}
  $L_1 \adjac L_2$ iff there is $A\in\reghipy_3$ with $L_1,L_2 \subset A$, and
  \\
  $L_1\not\adjac L_2$ when $L_1 \adjac L_2$ does not hold.
\end{ctext}
Following \ref{lem:regul3biegun} recall that 
$$
  \starofraf(\biegun) = \starofr(\biegun)
  \begin{cases}
    = \emptyset \text{ if } \biegun\notin\hipa;
    &
    \text{there are }L_1,L_2\in\starofr(\biegun) \text{ with } L_1\adjac L_2, L_1\neq L_2,
    \\
    \neq\emptyset \text{ if }\biegun\in\hipa;
    &
    L_1\not\adjac L_2 \text{ for all }L_1,L_2\in\starofr(\biegun).
  \end{cases}
$$

\begin{lem}\label{lem:pek2star:prop}
  Let ${\cal Q} = \pencr(p,A)\neq\emptyset$ with $p\in A\in\reghipy_3$ and $p\notin\hipa$,
  and let $L$ be a regular line not in ${\cal Q}$.
  Write $q := L^\infty$ and $M := A^\infty$.
  
  If $p\in L$ and $q\neq\biegun$, then one of the following holds:
  \begin{sentences}\itemsep-2pt
  \item\label{pek2star:cas1}
    $q \notin M^\perp$.
    Then there are distinct $L_1,L_2\in{\cal Q}$ such that 
    $\TRGafr(L,L_1,L_2)$.
  \item\label{pek2star:cas2}
    $q \in M^\perp$. Then there are distinct regular lines $L'$, $L''$
    through $q$
    such that $\wspolin_{\fixafr_2}(L,L',L'')$ and 
    ${L'}^\infty, {L''}^\infty\notin M^\perp$,
    so for both $L'$ and $L''$ the condition \eqref{pek2star:cas1} holds.
  \end{sentences}
  \par
  Conversely, if a line $L$ satisfies \eqref{pek2star:cas1} or \eqref{pek2star:cas2}
  then $p\in L$.
\end{lem}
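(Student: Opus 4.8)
The plan is to settle the two alternatives and the converse separately, after fixing notation. Note first that $p$ is a regular affine point; that $M=A^\infty$ is a regular line of the (possibly degenerate) symplectic geometry induced on $\hipa$, by \ref{cor:reg:sub} (as $A$ is regular of odd dimension; cf.\ also \ref{rem:regplanes:onhor}); and that, since ${\cal Q}\neq\emptyset$, we have $p\neq\srodek(A)$, so by \ref{lem:pencils} the pencil ${\cal Q}$ is $\pek(p,A)$ with its unique nonregular member $\LineOn(p,{\srodek(A)})$ deleted. Since $L$ is regular, passes through $p$, and $L\notin{\cal Q}$, it is affine, $L=\LineOn(p,q)$ with $q\in\hipa$, $q\neq\biegun$, and $L\not\subset A$ (otherwise $L$ would be a regular line of $\pek(p,A)$ through $p$ not through $\biegun$, hence in ${\cal Q}$). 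Finally $q\notin p^\perp$, for $q\perp p$ together with $q\perp q$ would put $q$ in $\Rad(L)$.

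\emph{Alternative \eqref{pek2star:cas1}: $q\notin M^\perp$.} Then $M\not\subset q^\perp$, so $M\cap q^\perp$ is a single point. As $M$ has at least $6$ points, pick distinct $x_1,x_2$ on $M$ with $x_i\notin q^\perp$ and $x_i$ not the improper point $s$ of the unique nonregular line of $A$ through $p$; put $L_i:=\LineOn(p,x_i)$. Then $L_1,L_2$ are distinct members of ${\cal Q}$ (regular, in $A$, through $p$, off $\biegun$ since $\biegun\notin M$) and $L_1+L_2=A$. I claim $L,L_1,L_2$ are the vertices of a triangle in $\fixafr_2$ with sides $A$, $L+L_2$, $L+L_1$. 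Indeed $L+L_i$ is a plane not contained in $\hipa$ with $(L+L_i)^\infty=\LineOn(q,x_i)$, a regular line of the symplectic geometry on $\hipa$ because $q\not\perp x_i$; hence $L+L_i$ is regular by \ref{lem:reg:plane}. The required incidence pattern (each vertex on exactly two of the three sides, all three sides and all three vertices pairwise distinct) reduces to $L\not\subset A$ together with $L_i\not\subset L+L_j$ for $i\neq j$; the latter because $L_i\subset L+L_j$ would force $A=L_1+L_2\subset L+L_j$ and hence $L\subset A$. This gives $\TRGafr(L,L_1,L_2)$, proving \eqref{pek2star:cas1}.

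\emph{Alternative \eqref{pek2star:cas2}: $q\in M^\perp$.} Now no such triangle can exist, since $(L+L_i)^\infty=\LineOn(q,L_i^\infty)$ lies inside $M^\perp$, hence is isotropic and $L+L_i$ is never regular; instead I produce a collinearity. Since $q\neq\biegun$ we have $q\notin\Rad(\hipa)$, so $q^\perp\cap\hipa$ is a hyperplane of $\hipa$, and $M^\perp\cap\hipa$ is a proper subspace of $\hipa$ (as $\Rad(\hipa)$ cannot contain the line $M$). Because the ground field is not $\GF(2)$ these two subspaces do not cover $\hipa$, so fix $x\in\hipa$ with $x\notin q^\perp$ and $x\notin M^\perp$ and set $B:=L+x$. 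Then $B$ is a plane through $L$, not contained in $\hipa$, with $B^\infty=\LineOn(q,x)$ a regular line (as $q\not\perp x$), so $B$ is regular by \ref{lem:reg:plane}; and since $q\in M^\perp$ while $x\notin M^\perp$, the line $B^\infty$ meets $M^\perp$ only in $q$. Now $B^\infty$ has at least $6$ points, $q\notin p^\perp$ so $B^\infty$ meets $p^\perp$ in a single point, and $q\notin M$ (as $M$ is regular and $q\in M^\perp$) so $B^\infty$ meets $M$ in at most a point; deleting from $B^\infty$ the point $q$, this point of $p^\perp$, this point of $M$, and (when $\biegun\in\hipa$) $\biegun$, I still find distinct $y',y''\in B^\infty$, and put $L':=\LineOn(p,y')$, $L'':=\LineOn(p,y'')$. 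Each of $L',L''$ is regular by \ref{lem:reg:line} (its improper point avoids $p^\perp$), avoids $\biegun$ (since its improper point $\neq\biegun$, or else by \ref{lem:regul3biegun}), lies in $B$ but not in $A$ so is not in ${\cal Q}$, and has improper point $\neq\biegun$ outside $M^\perp$; hence \eqref{pek2star:cas1}, applied to $L'$ (resp.\ $L''$) in place of $L$, holds for both. Finally $L,L',L''$ all lie in the nonempty regular pencil $\pencr(p,B)$, so $\wspolin_{\fixafr_2}(L,L',L'')$ by \ref{cor:reg2peki}, proving \eqref{pek2star:cas2}.

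\emph{Converse.} Suppose $L$ satisfies \eqref{pek2star:cas1}, via a triangle $\TRGafr(L,L_1,L_2)$ with $L_1,L_2\in{\cal Q}$. This is in particular a triangle in $\GrasSpace(\fixproj,2)$, so by \ref{fct:gras2pek:proj} its three vertices pass through a common point; since $L_1,L_2$ are distinct lines through $p$, that point is $p$, whence $p\in L$. If instead $L$ satisfies \eqref{pek2star:cas2}, its witnesses $L',L''$ satisfy \eqref{pek2star:cas1}, so $p\in L'$ and $p\in L''$ by the case just treated; then $\wspolin_{\fixafr_2}(L,L',L'')$ with \ref{cor:reg2peki} places $L,L',L''$ in a common regular pencil $\pencr(p^\ast,A^\ast)$, where $p^\ast\in L'\cap L''=\{p\}$, so $p^\ast=p$ and again $p\in L$. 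The delicate step is Alternative \eqref{pek2star:cas2}: producing the auxiliary regular plane $B$ carrying two lines through $p$ that are simultaneously regular, off $\biegun$, outside ${\cal Q}$, and with improper point off $M^\perp$ --- a counting argument that consumes the standing assumption that projective lines have at least $6$ points and that the coordinate field is not $\GF(2)$.
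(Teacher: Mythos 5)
Your proof is correct and follows essentially the same strategy as the paper's: in case \eqref{pek2star:cas1} you choose two points of $M$ off $p^\perp$ and $q^\perp$ and build the triangle with sides $A$, $L+L_1$, $L+L_2$; in case \eqref{pek2star:cas2} you construct an auxiliary regular plane $B$ through $L$ whose improper line meets $M^\perp$ only in $q$ and take two suitable lines of $\pencr(p,B)$; the converse is handled identically via \ref{fct:gras2pek:proj}. The only (harmless) variation is that in case \eqref{pek2star:cas2} the paper picks one point avoiding the three subspaces $q^\perp$, $M^\perp$, $p^\perp$ of $\hipa$ at once, whereas you avoid only two and discard the single point of $B^\infty\cap p^\perp$ afterwards.
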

\begin{proof}
  Note that $M$ is a regular line and $x_1 := p^\perp \cap M$ is a point, since otherwise $p\perp M$
  and then no line in $\penc(p,A)$ is regular. The line $L$ is regular so, $p\not\perp q$.
  \par
  Let $q \notin M^\perp$. Then $x_2 := M\cap q^\perp$ is a point, for if not then
  $q \perp M$, so $q \in M^\perp$.
  Take distinct $y_1,y_2\in M$ such that $y_1,y_2\neq x_1,x_2$ and set 
  $L_i = \LineOn(p,y_i)$ for $i=1,2$.
  From construction, $y_1,y_2 \not\perp p,q$.
  Then $L_1,L_2\in\pencr(a,A)$ by \ref{lem:reg:line}. 
  In view of \ref{lem:planeonhipa} applied for the plane $M+q$ the lines $M_i=\LineOn(q,y_i)$, $i=1,2$ are regular.
  So, the plane  $p+M_i$ is regular for $i=1,2$, which gives $\TRGafr(L,L_1,L_2)$.
  This completes the proof of \eqref{pek2star:cas1}.
  \par
  Let $q\in M^\perp$.
  The nonregular lines contained in $\hipa$ through $q$ are all
  contained in the hyperplane $q^\perp\cap\hipa$ of $\hipa$ 
  (the assumption $q\neq\biegun$ turns out to be essential here).
  It is impossible to decompose $\hipa$ into the union of three proper subspaces
  $q^\perp\cap\hipa$, $p^\perp\cap\hipa$, and $M^\perp\cap\hipa$, 
  so there is a point $q'\in\hipa$
  with $q'\notin q^\perp, M^\perp, p^\perp$.
  Then the line $K:=\LineOn(q,q')\subset\hipa$ is regular and $p\not\perp K$.
  Let $z := K \cap p^\perp$ and $q''\in M$, $q'' \neq q,q',z$.
  Then $q'' \not\perp p$ and thus the lines $L' := \LineOn(p,q')$ and 
  $L'' := \LineOn(p,q'')$ are regular.
  Write $B = K + p$; then $B\in\reghipy_3$ and, evidently, 
  $L,L',L'' \in \pencr(p,B)$.
  Moreover, $q'' \notin M^\perp$, as $M$ is regular by \ref{lem:reg:plane}.
  Since ${L'}^\infty = q'$ and ${L''}^\infty = q''$, the proof in case
  \eqref{pek2star:cas2} is complete.
  \par
  Now, let $L$ be an arbitrary regular line.
  From \ref{fct:gras2pek:proj} we get that
  \eqref{pek2star:cas1} implies $p\in L$.
  In case when \eqref{pek2star:cas2} holds we get 
  $p\in L',L''$, which directly gives $p\in L$.
\end{proof}

Since $p\notin\hipa$ in \ref{lem:pek2star:prop}, then in view of \ref{lem:pencils}
we have

\begin{cor}\label{cor:pek2star:prop}
  Let ${\cal Q} = \pencraf(p,A)\neq\emptyset$ with $p\in A\in\reghipy_3$ and $p\notin\hipa$,
  and let $L$ be a regular affine line not in ${\cal Q}$.
  Write $q := L^\infty$ and $M := A^\infty$.
  
  If $p\in L$ and $q\neq\biegun$, then one of the following holds:
  \begin{sentences}\itemsep-2pt
  \item\label{cor:pek2star:cas1}
    $q \notin M^\perp$.
    Then there are distinct $L_1,L_2\in{\cal Q}$ such that 
    $\TRGafrb(L,L_1,L_2)$.
  \item\label{cor:pek2star:cas2}
    $q \in M^\perp$. Then there are distinct regular affine lines $L'$, $L''$
    through $p$
    such that $\wspolin_{\fixafrb_2}(L,L',L'')$ and 
    ${L'}^\infty, {L''}^\infty\notin M^\perp$,
    so for both $L'$ and $L''$ the condition \eqref{cor:pek2star:cas1} holds.
  \end{sentences}
  \par
  Conversely, if a line $L$ satisfies \eqref{cor:pek2star:cas1} or \eqref{cor:pek2star:cas2}
  then $p\in L$.
\end{cor}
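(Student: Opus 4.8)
The plan is to read the Corollary off Lemma \ref{lem:pek2star:prop}, using that the hypothesis $p\notin\hipa$ forces the pencil, triangle and collinearity data of $\fixafr_2$ and of $\fixafrb_2$ to coincide on all objects entering the statement and its proof. First I would record the reductions granted by $p\notin\hipa$. By Lemma \ref{lem:pencils}, for every $A'\in\reghipy_3$ and every affine point $p'$ on $A'$ one has $\pencraf(p',A')=\pencr(p',A') = \pek(p',A')\cap\reghipy_2$; in particular ${\cal Q}=\pencraf(p,A)=\pencr(p,A)$, and for an affine line $L$ the condition $L\notin{\cal Q}$ is the same as $L\notin\pencr(p,A)$. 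Moreover, a line through an affine point is an affine line, hence belongs to $\linesrb$ as soon as it avoids $\biegun$; every triangle of $\fixafrb_2$ is a triangle of $\fixafr_2$ (same sides in $\reghipy_3$, vertices now in $\linesrb\subset\regbhipy_2$, same incidence $\subset$); and $\wspolin_{\fixafrb_2}(L_1,L_2,L_3)\implies\wspolin_{\fixafr_2}(L_1,L_2,L_3)$ whenever $L_1,L_2,L_3\in\linesrb$, while conversely, by Lemma \ref{lem:reg2peki}, three lines of $\linesrb$ contained in a common pencil $\pencraf(p',A')$ are $\wspolin_{\fixafrb_2}$-collinear.

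For the forward implication I would apply Lemma \ref{lem:pek2star:prop} to ${\cal Q}=\pencr(p,A)$ and $L$. In case \eqref{pek2star:cas1} it produces distinct $L_1,L_2\in\pencr(p,A)={\cal Q}$ with $\TRGafr(L,L_1,L_2)$; since $L,L_1,L_2$ all pass through the affine point $p$, they lie in $\linesrb$, so $\TRGafrb(L,L_1,L_2)$ holds, which is \eqref{cor:pek2star:cas1}. In case \eqref{pek2star:cas2} I would revisit the construction inside the proof of Lemma \ref{lem:pek2star:prop}: the witnesses $L',L''$ join $p$ to points $q',q''\in\hipa$ that are chosen subject only to finitely many ``off a hyperplane / distinct from a few points'' conditions, so one may additionally demand $q',q''\neq\biegun$. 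Then $L',L''\in\linesrb$, they pass through $p$, and $L,L',L''$ lie in a common regular pencil $\pencraf(p,B)$ with $B\in\reghipy_3$; hence $\wspolin_{\fixafrb_2}(L,L',L'')$ by the reduction above, while ${L'}^\infty=q'$, ${L''}^\infty=q''\notin M^\perp$ exactly as in the lemma. This is \eqref{cor:pek2star:cas2}.

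For the converse I would observe that if $L\in\linesrb$ satisfies \eqref{cor:pek2star:cas1} or \eqref{cor:pek2star:cas2} in $\fixafrb_2$, then the same witnesses establish \eqref{pek2star:cas1} or \eqref{pek2star:cas2} for $L$ in $\fixafr_2$: a $\fixafrb_2$-triangle is a $\fixafr_2$-triangle, $\wspolin_{\fixafrb_2}$ refines $\wspolin_{\fixafr_2}$ on $\linesrb$, and $\linesrb\subset\regbhipy_2$. The converse part of Lemma \ref{lem:pek2star:prop} then yields $p\in L$. The only delicate point is the bookkeeping in the forward direction, namely checking that every auxiliary line built in the proof of Lemma \ref{lem:pek2star:prop} is genuinely regular, affine and misses $\biegun$; this is exactly what $p\notin\hipa$ (through Lemma \ref{lem:pencils}) together with the standing assumption that lines carry at least six points supplies, so no geometry beyond Lemmas \ref{lem:pek2star:prop}, \ref{lem:pencils} and \ref{lem:reg2peki} is needed.
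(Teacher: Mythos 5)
Your proposal is correct and follows the paper's own route: the paper derives the corollary immediately from Lemma \ref{lem:pek2star:prop} via Lemma \ref{lem:pencils} (using $p\notin\hipa$ to identify $\pencraf(p,A)$ with $\pencr(p,A)$ and to note $\biegun\notin A^\infty$), and your write-up simply makes explicit the bookkeeping that the auxiliary lines are affine, avoid $\biegun$, and that triangles and collinearity transfer between $\fixafr_2$ and $\fixafrb_2$.
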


\begin{lem}\label{lem:pek2star:hor}
  Let ${\cal Q} = \pencraf(p,A)\neq\emptyset$ with $p\in A\in\reghipy_3$,
  $p\in\hipa$ and let $L\notin{\cal Q}$ be a regular affine line.
  Then $p\in L$ iff  there are distinct $L_1, L_2\in{\cal Q}$
  such that $\TRGafrb(L,L_1,L_2)$ holds.
\end{lem}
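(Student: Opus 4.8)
The plan is to handle the two implications separately. The direction ``$\Leftarrow$'' is immediate: if $\TRGafrb(L,L_1,L_2)$ holds with distinct $L_1,L_2\in{\cal Q}$, then $L_1$ and $L_2$ both pass through $p$, so by the observation recorded just after \ref{fct:gras2pek:proj} (a triangle of regular lines in $\fixafrb_2$ has a common point) the third vertex $L$ passes through $p$ as well.

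For ``$\Rightarrow$'' I would first fix the data. Since ${\cal Q}=\pencraf(p,A)\neq\emptyset$ and $p\in\hipa$, one gets $p\in M:=A^\infty$, $p\neq\biegun$ and $\biegun\notin A$ (otherwise ${\cal Q}=\emptyset$, resp.\ $A$ would not be regular, cf.\ \ref{fct:regplanes:przezb}); $A$ is a regular plane, hence by \ref{rem:regplanes:onhor} an affine plane, $M=A^\infty$ is regular in the symplectic geometry on $\hipa$ by \ref{lem:reg:plane}, and by \ref{fct:classif:planes} and \ref{fct:plane:srodek} the point $\srodek(A)=\Hrd(A)$ is an affine point of $A$ with $\srodek(A)\perp M$, the nonregular affine lines of $A$ being exactly those through $\srodek(A)$. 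By \ref{lem:pencils}, ${\cal Q}$ is the parallel pencil of \emph{all} affine lines of $A$ through $p$ except the single nonregular one $L^\ast:=\LineOn({\srodek(A)},p)$. Finally $L^\infty=p$ and, since $L\notin{\cal Q}$, $L\not\subset A$.

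I then aim to produce the triangle with vertices $L,L_1,L_2$ and sides $A$, $L+L_2$, $L+L_1$; once $L_1\neq L_2$ are chosen in ${\cal Q}$ with $L+L_1$ and $L+L_2$ regular planes, all incidence and non-incidence conditions of a triangle in $\fixafrb_2$ follow formally, using $L\not\subset A$, $L_1\neq L_2$ and the identity $A\cap(L+L_i)=L_i$ inside the $4$-dimensional space $Y:=L+A$. So the task reduces to finding two such lines. Here $Y^\infty:=Y\cap\hipa$ is a plane containing $M$ and $p$, and a dimension count yields a bijection $L_i\mapsto(L+L_i)^\infty$ between the affine lines of $A$ through $p$ and the lines through $p$ in $Y^\infty$ other than $M$, with each $L+L_i$ a plane off $\hipa$. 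By \ref{lem:reg:plane} and since $(L+L_i)^\infty$ is a line through the selfconjugate point $p$, the plane $L+L_i$ is regular iff $(L+L_i)^\infty\not\subset p^\perp$; as $M$ is regular, $M\not\subset p^\perp$, so $N:=Y^\infty\cap p^\perp$ is a line through $p$ and is the only line through $p$ in $Y^\infty$ contained in $p^\perp$. Hence at most one line of ${\cal Q}$ --- the one corresponding to $N$ --- can fail to yield a regular plane.

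The step I expect to be the main obstacle is checking that this single ``bad'' direction $N$ is not the direction of the excluded nonregular line $L^\ast$, so that the loss really occurs inside ${\cal Q}$ and is unique. I would settle this by a short computation in coordinates with $\hipa\colon x_0=0$: picking affine points $x\in L$ and $\srodek(A)\in L^\ast$, one finds $(L+L^\ast)^\infty$ to be the line joining $p$ with the direction $\gen{x-\srodek(A)}$, and since $\xi(\srodek(A),p)=0$ (because $\srodek(A)=\Hrd(A)\perp M\ni p$) while $\xi(x,p)\neq 0$ (because $L$ is regular, cf.\ \ref{lem:reg:line}), one gets $\xi(x-\srodek(A),p)\neq 0$, i.e.\ $(L+L^\ast)^\infty\not\subset p^\perp$, whence $(L+L^\ast)^\infty\neq N$. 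Therefore the line of ${\cal Q}$ corresponding to $N$ is an affine line of $A$ through $p$ different from $L^\ast$, so it lies in ${\cal Q}$, and it is the only member of ${\cal Q}$ giving a nonregular $L+L_i$. As ${\cal Q}$ has more than three elements (lines of $\fixproj$ being long), at least two lines $L_1\neq L_2$ of ${\cal Q}$ give regular planes $L+L_1$, $L+L_2$; together with the regular plane $A$ these complete the triangle, proving $\TRGafrb(L,L_1,L_2)$. The remaining work is the dimension bookkeeping in $Y$ and the routine triangle checks; note that, by contrast with \ref{lem:pek2star:prop}, no detour through a collinearity relation is needed here precisely because $L$ and the lines of ${\cal Q}$ share the improper point $p$, so the direction $(L+L_i)^\infty$ genuinely varies and only one value spoils regularity.
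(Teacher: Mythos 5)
Your argument is correct and follows essentially the same route as the paper: both work in the improper plane $(A+L)\cap\hipa$ of the $3$-space $A+L$, observe that the directions $(L+L_i)^\infty$ spoiling regularity of $L+L_i$ form a single line through $p$ (your $N$, the paper's $\LineOn(p,{\Rad(D)})$), and then pick two surviving members of ${\cal Q}$ using that pencils are long enough. The only cosmetic difference is bookkeeping — the paper excludes three lines of the pencil in $D$ and pulls back, while you count inside ${\cal Q}$ via the bijection $L_i\mapsto(L+L_i)^\infty$ (and your verification that $(L+L^\ast)^\infty\neq N$, though correct, is not needed: ``at most one bad line in ${\cal Q}$'' already suffices).
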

\begin{proof}
  By \ref{lem:reg:plane} and our assumptions the line $M := A^\infty$ is regular.
  Hence $p\neq\biegun$.
  By \ref{lem:pencils} the projective pencil $\penc(p,A)$ contains exactly two 
  lines that are not in $\pencraf(p,A)$, namely $M$ and $K_0 := \LineOn(p,{\srodek(A)})$.
  
  \ltor  
  We have $L\not\subset\hipa$ as $L$ is affine.
  Since $A+L$ is a projective 3-space, $D := (A+L)\cap\hipa$ is a plane.
  Note that $M\subset D$ so, by \ref{lem:planeonhipa} $q := \Rad(D)$ is 
  a point such that $q\notin M$.
  Set $K_1 = \LineOn(p,q)$. Consider the line $K_2:=(K_0 + L)\cap D$.
  Take two lines $M_1, M_2$ on $D$ through $p$ with $M_1,M_2\neq M,K_1,K_2$.
  So, $M_i$ is regular, and by  \ref{lem:reg:plane} the plane $L + M_i$ is regular
  for $i=1,2$. Now take $L_i := (L + M_i)\cap A$, $i=1,2$. Observe that
  $L_1, L_2\neq K_0$.
  Suppose that $L_i$ is nonregular. Then by \ref{lem:reg:line} we have $p\perp L_i$ 
  which is impossible as the plane $A = L_1+L_2$ is regular.
  Clearly, $L_1,L_2 \in{\cal Q}$ and $\TRGafrb(L,L_1,L_2)$.

  \rtol
  A direct consequence of \ref{fct:gras2pek:proj}.
\end{proof}

\begin{cor}\label{cor:pek2star:hor}
  Let ${\cal Q} = \pencr(p,A)\neq\emptyset$ with $p\in A\in\reghipy_3$, 
  $p\in \hipa$, and let $L\notin{\cal Q}$ be a regular line.
  Then $p\in L$ iff  there are distinct $L_1, L_2\in{\cal Q}$
  such that $\TRGafr(L,L_1,L_2)$ holds.
\end{cor}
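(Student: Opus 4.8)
The plan is to derive this corollary from its $\fixafrb_2$-counterpart \ref{lem:pek2star:hor} in the configurations the latter covers, and to dispose by a short direct argument of the one configuration that has no analogue in $\fixafrb_2$: that in which $L$ is contained in $\hipa$. Throughout write $M:=A^\infty$; by \ref{rem:regplanes:onhor} $A\not\subset\hipa$, so $M$ is a regular line through $p$ (cf.\ \ref{lem:reg:plane}). Since $\srodek(A)$ is an affine point by \ref{fct:plane:srodek} and $p\in\hipa$, the line $K_0:=\LineOn(p,{\srodek(A)})$ is well defined, and by \ref{lem:pencils} the pencil $\pencr(p,A)$ consists of all lines through $p$ lying on $A$ except $K_0$, while $\pencraf(p,A)=\pencr(p,A)\setminus\set{M}$; as a projective line has at least six points, $\pencr(p,A)$ has at least five elements and $\pencraf(p,A)$ at least four.

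The implication from right to left is identical in both structures: if distinct $L_1,L_2\in{\cal Q}=\pencr(p,A)$ satisfy $\TRGafr(L,L_1,L_2)$, then $L_1$ and $L_2$ meet at $p$, and since, by common projective geometry (cf.\ \ref{fct:gras2pek:proj}), the three vertices of a triangle of $\fixafr_2$ are concurrent, we get $p\in L$. For the converse assume $p\in L$. As $L$ is regular, $\biegun\notin L$ and $L\notin{\cal Q}$, we must have $L\not\subset A$, hence $L\neq M$. If $L$ is an affine line, then $L\in\linesrb$, $L\notin\pencraf(p,A)$, and $\pencraf(p,A)\neq\emptyset$, so \ref{lem:pek2star:hor} applied with the pencil $\pencraf(p,A)$ yields distinct $L_1,L_2\in\pencraf(p,A)\subseteq{\cal Q}$ with $\TRGafrb(L,L_1,L_2)$. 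Because $\linesrb\subseteq\regbhipy_2$ and the geometries $\fixafrb_2$, $\fixafr_2$ share the same lineset $\reghipy_3$, the three regular planes witnessing $\TRGafrb(L,L_1,L_2)$ also witness $\TRGafr(L,L_1,L_2)$; this settles the affine subcase.

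There remains the subcase $L\subset\hipa$ with $p\in L$ (so, by the previous paragraph, $L\not\subset A$ and $L\neq M$). Since $\pencraf(p,A)$ has at least four elements, I pick two distinct lines $L_1,L_2\in\pencraf(p,A)\subseteq\pencr(p,A)={\cal Q}$; these are affine lines through $p$ lying on $A$. Put $B_1:=L+L_2$ and $B_2:=L+L_1$. Working inside the projective $3$-space $L+A$, a routine dimension count gives $(L+L_i)\cap A=L_i$, whence $A$, $B_1$, $B_2$ are pairwise distinct planes, each of $L,L_1,L_2$ lies on exactly two of them, and $L,L_1,L_2$ are the vertices of the resulting triangle. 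Finally, since $L_j$ is affine and $L\subset\hipa$, the plane $B_i$ is not contained in $\hipa$ and $B_i^\infty=L$, which is regular; hence $B_i\in\reghipy_3$ by \ref{lem:reg:plane}, and $A\in\reghipy_3$ by hypothesis. Thus $\TRGafr(L,L_1,L_2)$ holds with $L_1,L_2\in{\cal Q}$, and the corollary follows.

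I expect the only genuinely new point --- and hence the only place needing care --- to be this last subcase. Two things must be checked there: that the auxiliary lines $L_1,L_2$ can be taken affine (and they automatically miss $\biegun$, being members of $\pencr(p,A)\subseteq\regbhipy_2$), so that they truly lie in ${\cal Q}$ and the planes $B_i$ are not the non-regular plane $L+M$ on $\hipa$; and that $B_1,B_2$ are actually regular, which is exactly where the identity $B_i^\infty=L$ together with \ref{lem:reg:plane} is used. The affine subcase, by contrast, is pure bookkeeping, extracted from \ref{lem:pek2star:hor} in the same manner that \ref{cor:pek2star:prop} was extracted from \ref{lem:pek2star:prop}.
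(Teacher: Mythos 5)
Your proposal is correct and follows essentially the same route as the paper: the converse via \ref{fct:gras2pek:proj}, the affine case of the forward direction by transferring \ref{lem:pek2star:hor} from $\fixafrb_2$ to $\fixafr_2$, and the case $L\subset\hipa$ by taking two affine lines of the pencil and checking that $L+L_1$, $L+L_2$ are regular via \ref{lem:reg:plane}. You merely spell out more of the bookkeeping (the count of $\pencraf(p,A)$, the identity $B_i^\infty=L$) than the paper does.
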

\begin{proof}
  \ltor
  There are two cases to consider here: 
  (i) $L\subset\hipa$ and 
  (ii) $L\not\subset\hipa$.
  
  In case (i) it suffices to take any two affine lines $L_1,L_2\in{\cal Q}$
  so, we have two new regular planes $L+L_1$, $L+L_2$ by \ref{lem:reg:plane}
  and $A = L_1+L_2$ which means that $\TRGafr(L,L_1,L_2)$ is valid.
  
  The case (ii) follows from \ref{lem:pek2star:hor}.
  
  \rtol
  A direct consequence of \ref{fct:gras2pek:proj}.  
\end{proof}

Note that when $p\in\hipa$, then in $\starof({\cal Q})$ there is no line $L$
with $L^\infty\perp A^\infty$ as such a line is nonregular. 
Thus $\starof_{\rm L}({\cal Q})=\emptyset$ and 
consequently 
$\starof({\cal Q}) = {\cal Q}\cup\starof_\Delta({\cal Q})$. Taking this into account
and summing up \ref{lem:pek2star:prop} together with \ref{cor:pek2star:hor} and
\ref{cor:pek2star:prop} together with \ref{lem:pek2star:hor} we get

\begin{prop}\label{prop:stars}
  Let $p\in A\in\reghipy_3$.
  If\/ $p\notin\hipa$ and\/ $\biegun\notin\hipa$, or $p\in\hipa$ and\/ $p\neq\biegun$, then
  \begin{cmath} 
    \starof(\pencr(p,A))=\starofr(p)\qquad\text{and}\qquad\starof(\pencraf(p,A))=\starofraf(p).
  \end{cmath} 
  If\/ $p\notin\hipa$ and\/ $\biegun\in\hipa$, then
  \begin{cmath} 
    \starof(\pencr(p,A)) = \starof(\pencraf(p,A)) = 
      \starofr(p)\setminus\{\LineOn(p,\biegun)\} = \starofraf(p)\setminus\{\LineOn(p,\biegun)\}.
  \end{cmath}    
\end{prop}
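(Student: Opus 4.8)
\emph{Plan.} The proof is a synthesis of \ref{lem:pek2star:prop}, \ref{cor:pek2star:prop}, \ref{lem:pek2star:hor} and \ref{cor:pek2star:hor}; the work lies in the bookkeeping, not in any single computation. Throughout write ${\cal Q}$ for the pencil under consideration and $M := A^\infty$. Since $A\in\reghipy_3$ and $A\not\subset\hipa$ (no regular plane lies on $\hipa$, by \ref{rem:regplanes:onhor}), $M$ is a regular line by \ref{lem:reg:plane}. As in the cited lemmas I take ${\cal Q}\neq\emptyset$; when $p\in\hipa$ this is automatic because $\srodek(A)$ is an affine point (\ref{fct:plane:srodek}), so $p\neq\srodek(A)$ and \ref{lem:pencils} gives $\pencr(p,A),\pencraf(p,A)\neq\emptyset$.

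First I would settle the easy inclusion $\starof({\cal Q})\subseteq\starofr(p)$ (resp.\ $\subseteq\starofraf(p)$), valid in all cases. Each line of ${\cal Q}$ is regular and passes through $p$; each line of $\starof_\Delta({\cal Q})$ or $\starof_{\rm L}({\cal Q})$ is, by definition, regular, and it passes through $p$ by the converse halves of \ref{fct:gras2pek:proj}, \ref{lem:pek2star:prop}, \ref{lem:pek2star:hor} (three lines forming a Grassmann triangle, or three $\wspolin$-collinear lines two of which meet at $p$, all share $p$). Moreover $\TRGafr$, $\wspolin_{\fixafr_2}$ (resp.\ $\TRGafrb$, $\wspolin_{\fixafrb_2}$) only ever involve lines of $\regbhipy_2$ (resp.\ $\linesrb$), so no line of $\starof_\Delta({\cal Q})\cup\starof_{\rm L}({\cal Q})$ runs through $\biegun$; in particular, when $\biegun\in\hipa$ the line $\LineOn(p,\biegun)$, which is regular by \ref{lem:regul3biegun} but is isolated in $\fixgras_2$ by \ref{fct:isolatedlines}, never belongs to $\starof({\cal Q})$. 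This already gives $\starof({\cal Q})\subseteq\starofr(p)\setminus\{\LineOn(p,\biegun)\}$ in the case $\biegun\in\hipa$.

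For the reverse inclusions I would split as in the statement. If $p\notin\hipa$ then \ref{lem:pencils} yields $\pencr(p,A)=\pencraf(p,A)$ and $\starofr(p)=\starofraf(p)$, and the two triangle/collinearity relations coincide on triples of regular affine lines avoiding $\biegun$ (both amount, via \ref{lem:reg2peki}, to concurrency inside a regular $3$-space), so it suffices to compute $\starof(\pencr(p,A))$. Given a regular line $L\ni p$, $L\notin{\cal Q}$, put $q:=L^\infty$; when $\biegun\notin\hipa$ every such $q$ is $\neq\biegun$, while when $\biegun\in\hipa$ the only $L$ with $q=\biegun$ is $\LineOn(p,\biegun)$, which we discard. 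Then \ref{lem:pek2star:prop} applies: $q\notin M^\perp$ puts $L$ into $\starof_\Delta({\cal Q})$, and $q\in M^\perp$ produces distinct $L',L''\in\starof_\Delta({\cal Q})$ with $\wspolin_{\fixafr_2}(L,L',L'')$, hence $L\in\starof_{\rm L}({\cal Q})$. So $\starof(\pencr(p,A))$ is $\starofr(p)$ if $\biegun\notin\hipa$ and $\starofr(p)\setminus\{\LineOn(p,\biegun)\}$ if $\biegun\in\hipa$, which, with the identifications above, is exactly the claim for $p\notin\hipa$. If instead $p\in\hipa$ (and $p\neq\biegun$), then $p\in M$ with $M$ regular, so $p\not\perp M$, whence no regular line through $p$ has $L^\infty\perp M$ and $\starof_{\rm L}({\cal Q})=\emptyset$ (as observed just before the proposition); more importantly, \ref{cor:pek2star:hor} (resp.\ \ref{lem:pek2star:hor}) says a regular line (resp.\ regular affine line) through $p$ not in ${\cal Q}$ lies in $\starof_\Delta({\cal Q})$, and conversely, and every regular line through $p\in\hipa$ automatically avoids $\biegun$; hence $\starof(\pencr(p,A))={\cal Q}\cup\big(\starofr(p)\setminus{\cal Q}\big)=\starofr(p)$ and likewise $\starof(\pencraf(p,A))=\starofraf(p)$.

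The hard part is purely organisational: one must remember that $\LineOn(p,\biegun)$ is \emph{non}regular when $\biegun\notin\hipa$ yet regular (but isolated) when $\biegun\in\hipa$; that $\starof_\Delta$ and $\starof_{\rm L}$ are built out of the $\biegun$-free geometries $\fixafr_2$, $\fixafrb_2$; and that switching between $\pencr$ and $\pencraf$, and between $\TRGafr/\wspolin_{\fixafr_2}$ and $\TRGafrb/\wspolin_{\fixafrb_2}$, is legitimate only because $p\notin\hipa$ forces the relevant pencils to agree via \ref{lem:pencils}. The single place where one leans hardest on earlier work is the step in the $q\in M^\perp$ subcase where one needs the lines $L',L''$ supplied by \ref{lem:pek2star:prop}\eqref{pek2star:cas2} to actually lie in $\starof_\Delta({\cal Q})$ — which is precisely the content of that lemma's closing clause ``so for both $L'$ and $L''$ the condition \eqref{pek2star:cas1} holds''.
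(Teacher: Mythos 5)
Your proposal is correct and follows exactly the route the paper takes: the paper derives \ref{prop:stars} by ``summing up'' \ref{lem:pek2star:prop} with \ref{cor:pek2star:hor} and \ref{cor:pek2star:prop} with \ref{lem:pek2star:hor}, together with the observation that $\starof_{\rm L}({\cal Q})=\emptyset$ when $p\in\hipa$, which is precisely your case split. Your additional bookkeeping (the status of $\LineOn(p,\biegun)$ in each case, and the agreement of $\pencr$ with $\pencraf$ and of the two triangle/collinearity relations when $p\notin\hipa$ via \ref{lem:pencils}) is accurate and only makes explicit what the paper leaves implicit.
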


Roughly speaking, all the points of $\fixproj$ except $\biegun$
can be reconstructed in terms of $\fixafr_2$ as well as in terms of $\fixafrb_2$.
Actually, this is expectable since no plane through improper $\biegun$ is regular
(comp. \ref{rem:regplanes:onhor} and \ref{fct:regplanes:przezb}).

Now we will try to distinguish regular and nonregular points of $\fixproj$ in
terms of $\fixafr_2$ and $\fixafrb_2$. To do that we need a convenient
characterization of binary collinearity (adjacency) in these two structures.
Basically two regular affine lines are adjacent iff they are coplanar and the
plane is regular. Note that if a regular line from $\hipa$ and  a regular affine
line are coplanar then the plane is always regular, while regular lines on
$\hipa$ are never adjacent by \ref{rem:regplanes:onhor}. The next lemma sheds
yet more light on (non)adjacency of regular lines.

\begin{lem}\label{lem:char:notadjac}
  Let $L_1,L_2$ be distinct regular lines through a point $p$.
  Then $L_1\not\adjac L_2$ iff one of the following holds
  \begin{enumerate}[\rm(i)]
  \item
    $p\notin\hipa$ and $L_1^\infty\perp L_2^\infty$;
  \item
    $p\in\hipa$ and \parskip0pt
    \begin{enumerate}[\rm a)]
    \item
      either $L_1,L_2\subset \hipa$,
    \item
      or $L_1,L_2\not\subset \hipa$ and $p\perp (L_1 + L_2)^\infty$
    \end{enumerate}
  \end{enumerate}
\end{lem}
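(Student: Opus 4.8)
The plan is to prove \ref{lem:char:notadjac} by unwinding the definition of $\adjac$ together with the regularity criteria already established, namely \ref{lem:reg:line}, \ref{lem:reg:plane}, \ref{lem:planeonhipa}, and \ref{rem:regplanes:onhor}. Recall $L_1\adjac L_2$ means there is a regular plane $A\in\reghipy_3$ with $L_1,L_2\subset A$; since $L_1,L_2$ already meet in $p$, the only candidate plane is $A:=L_1+L_2$. Hence $L_1\not\adjac L_2$ iff the plane $A=L_1+L_2$ is \emph{not} regular. So the whole lemma reduces to: \emph{characterize, in terms of $p$, $L_1$, $L_2$, when the plane $A=L_1+L_2$ through $p$ spanned by two regular lines is nonregular.} I will split according to whether $p\in\hipa$.

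First, the case $p\notin\hipa$. Here $A$ is an affine plane (not contained in $\hipa$), so \ref{lem:reg:plane} applies: $A$ is regular iff the line $A^\infty$ is regular in the symplectic geometry on $\hipa$, i.e. iff $A^\infty\not\perp A^\infty$ (a line in a symplectic space is nonregular iff it is totally isotropic). Now $L_1^\infty$ and $L_2^\infty$ are two distinct points of $\hipa$ spanning $A^\infty$ (distinct because $L_1\ne L_2$ and they share the affine point $p$). A line $\LineOn(L_1^\infty,L_2^\infty)$ is totally isotropic iff $L_1^\infty\perp L_2^\infty$. So $A$ is nonregular iff $L_1^\infty\perp L_2^\infty$, which is exactly clause (i). (One small point: since $L_1,L_2$ are themselves regular affine lines, \ref{lem:reg:line} tells us $L_1\not\subset(L_1^\infty)^\perp$ etc., but this does not by itself force $L_1^\infty\not\perp L_2^\infty$; hence both possibilities genuinely occur, and the characterization is the content of the lemma.)

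Next, the case $p\in\hipa$. Subcase (a): both $L_1,L_2\subset\hipa$. Then $A=L_1+L_2\subset\hipa$, and by \ref{rem:regplanes:onhor} there is no regular plane on $\hipa$, so $A$ is nonregular and $L_1\not\adjac L_2$ always — matching clause (ii.a). Subcase: $L_1,L_2\not\subset\hipa$ (note these are the only remaining options: if exactly one of $L_1,L_2$ lay on $\hipa$, the plane $A$ would contain that regular line from $\hipa$ together with an affine line, and — as the text remarks just before the lemma — such a plane is always regular; one should check this falls under ``$L_1\adjac L_2$'', i.e. it is \emph{not} among the nonadjacency cases, which is consistent with the lemma listing only (ii.a) and (ii.b)). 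So suppose $L_1,L_2\not\subset\hipa$, meaning $A$ is an affine plane with $p$ an improper point, $p\in A^\infty$. Then $A^\infty$ is a line on $\hipa$ through $p$. Apply \ref{lem:reg:plane} again: $A$ is regular iff $A^\infty$ is regular in the symplectic geometry on $\hipa$, i.e. iff $A^\infty\not\perp A^\infty$. Since $p\in A^\infty$, if $A^\infty$ is totally isotropic then $p\perp A^\infty=(L_1+L_2)^\infty$; conversely if $p\perp(L_1+L_2)^\infty$ then, because a line on $\hipa$ through $p$ is totally isotropic iff its generic point is $\perp p$ — using that $p\perp p$ in the symplectic geometry and bilinearity — $A^\infty$ is totally isotropic. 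Hence $A$ is nonregular iff $p\perp(L_1+L_2)^\infty$, which is clause (ii.b).

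The main obstacle is the bookkeeping in the case $p\in\hipa$: one must be careful that the four listed alternatives (i), (ii.a), (ii.b) exhaust exactly the nonadjacent configurations and that the ``mixed'' case (one line on $\hipa$, one affine) is correctly excluded from the list because it yields adjacency. The geometric inputs are all already available — \ref{lem:reg:plane} does the heavy lifting for affine planes, \ref{rem:regplanes:onhor} kills planes inside $\hipa$ — so the remaining work is purely a careful enumeration of how $L_1,L_2$ can sit relative to $\hipa$ and a one-line symplectic computation that a line through $p\in\hipa$ is totally isotropic iff $p$ is conjugate to the span.
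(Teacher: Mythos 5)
Your proposal is correct and follows essentially the same route as the paper's own proof: reduce nonadjacency to nonregularity of the unique plane $A=L_1+L_2$, settle the case $p\notin\hipa$ via \ref{lem:reg:plane}, and for $p\in\hipa$ enumerate the three positions of $L_1,L_2$ relative to $\hipa$, with the mixed case yielding a regular plane (hence adjacency) and the remaining case reduced to $p\perp A^\infty$ exactly as in the paper. No gaps; your extra remarks on the bilinearity computation and on why the mixed case is absent from the list are just a more explicit rendering of the same argument.
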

\begin{proof}
  Clearly, $A := L_1 + L_2$ is a plane. Therefore, $L_1\not\adjac L_2$ iff 
  $A$ is not regular. 
  
  In case $p\notin\hipa$ the plane $A$ is affine and, in view of \ref{lem:reg:plane},
  it is nonregular iff $A^\infty = \LineOn({L_1^\infty},{L_2^\infty})$ is nonregular
  i.e. iff $L_1^\infty\perp L_2^\infty$.
  
  In case $p\in\hipa$ there are three possibilities.
  If $L_1,L_2\subset\hipa$, then $A\subset\hipa$, so $A$ is nonregular by \ref{rem:regplanes:onhor}.
  If $L_1\subset\hipa$ and $L_2\not\subset\hipa$, then $A^\infty = L_1$, so $A$ is regular
  by \ref{lem:reg:plane}. 
  Finally, if $L_1,L_2\not\subset\hipa$, then $A$ is nonregular iff 
  $A^\infty\perp A^\infty$, which is equivalent to
  $p\perp A^\infty$.
\end{proof}

\begin{lem}\label{lem:gras2hor}
  Assume that $\dim(\fixV) > 4$.
  Let ${\cal Q} = \pencr(p,A)$ or ${\cal Q} = \pencraf(p,A)$ with 
  ${\cal Q}\neq\emptyset$ and $A\in\reghipy_3$.
  The following conditions are equivalent:
  \begin{enumerate}[\rm(i)]
  \item
    $p\in\hipa$; 
  \item\label{cond:defhipa}
    for all $L,L_1,L_2\in\starof({\cal Q})$ from 
    $L\not\adjac L_1,L_2$ it follows that $L_1\not\adjac L_2$.
  \end{enumerate}
\end{lem}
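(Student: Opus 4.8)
The plan is to analyze the set $\starof({\cal Q})$ via \ref{prop:stars} and then reduce the "all-triangle" condition \eqref{cond:defhipa} to a purely combinatorial statement about the non-adjacency relation inside a star of regular lines, which by \ref{lem:char:notadjac} is expressible through the symplectic geometry on the pole-line $L^\infty$ of each line $L$. First I would split into the two directions. For (i)$\implies$(ii): assume $p\in\hipa$, so by \ref{prop:stars} we have $\starof({\cal Q})=\starofr(p)$ (or $\starofraf(p)$ in the affine case), the full star of regular lines through the point $p$ on the hyperplane. By \ref{lem:char:notadjac}(ii), for two lines $L,L'\in\starofr(p)$ we have $L\not\adjac L'$ precisely when either both lie on $\hipa$, or neither lies on $\hipa$ and $p\perp(L+L')^\infty$. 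I would set $X := p^\perp\cap\hipa$ (a hyperplane of $\hipa$ since $p$ is selfconjugate: $p\in X$). Writing $\widehat L := L^\infty$, the condition "$L\not\subset\hipa$ and $p\perp(L+L')^\infty$" says $\widehat L,\widehat L'\in X$ (because $(L+L')^\infty$ is the line $\LineOn(\widehat L,\widehat L')$ and $p\perp$ this line forces $\widehat L,\widehat L'\in p^\perp$; conversely if both improper points lie in $X$ then so does their join). Also $L\subset\hipa$ iff, say, $\widehat L$ is "undefined", but one checks directly that every regular line on $\hipa$ through $p$ is non-adjacent to every regular line through $p$ (by \ref{rem:regplanes:onhor} for the on-$\hipa$ ones, and by \ref{lem:char:notadjac}(ii\,a) paired against affine lines — here one uses that an affine line $L_2$ through $p\in\hipa$ has $A^\infty=L_1$ regular, so actually $L_1\subset\hipa$ gives $L_1\adjac L_2$; this subtlety must be handled carefully). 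In fact the cleanest bookkeeping is: $L\not\adjac L'$ inside $\starofr(p)$ iff $\widehat L,\widehat L'$ both lie in the hyperplane $X$ of $\hipa$, where we declare $\widehat L\in X$ automatically when $L\subset\hipa$. Then non-adjacency is exactly the relation "both improper points in $X$", which is manifestly transitive-like: if $L\not\adjac L_1$ and $L\not\adjac L_2$ then $\widehat L_1,\widehat L_2\in X$, hence $L_1\not\adjac L_2$. This is condition \eqref{cond:defhipa}.

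For (ii)$\implies$(i), I argue contrapositively: assume $p\notin\hipa$ and produce $L,L_1,L_2\in\starof({\cal Q})$ with $L\not\adjac L_1$, $L\not\adjac L_2$, but $L_1\adjac L_2$. By \ref{prop:stars}, $\starof({\cal Q})$ is $\starofr(p)$ — possibly with the single line $\LineOn(p,\biegun)$ removed when $\biegun\in\hipa$. By \ref{lem:char:notadjac}(i), for affine lines $L,L'$ through the affine point $p$ we have $L\not\adjac L'$ iff $\widehat L\perp\widehat L'$ in the symplectic geometry on $\hipa$. Now I need three improper points $q=\widehat L$, $q_1=\widehat{L_1}$, $q_2=\widehat{L_2}$ on $\hipa$ with $q\perp q_1$, $q\perp q_2$, $q_1\not\perp q_2$, and moreover such that the corresponding lines through $p$ are regular (i.e. each of $q,q_1,q_2$ lies outside $p^\perp$, by \ref{lem:reg:line}/\ref{cor:regularneWpeku}) and, if $\biegun\in\hipa$, none of them is $\biegun$. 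Such a triple exists because: take any $q_1,q_2\in\hipa$ with $q_1\not\perp q_2$ (a hyperbolic pair in the nondegenerate symplectic space $\hipa$ — degenerate case handled by choosing them off the radical); then $q_1^\perp\cap q_2^\perp$ is a subspace of codimension $2$ in $\hipa$, and since $\dim(\fixV)>4$ we have $\dim(\hipa)\ge 3$, so $q_1^\perp\cap q_2^\perp$ has dimension $\ge 1$, hence contains a point $q$ with $q\perp q_1,q_2$; finally a counting argument (removing the hyperplane $p^\perp\cap\hipa$ and, if needed, the point $\biegun$ and adjusting $q$ within a line of the codimension-$2$ space) secures regularity and $q\neq\biegun$. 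For these, $\LineOn(p,q)\not\adjac\LineOn(p,q_i)$ but $\LineOn(p,q_1)\adjac\LineOn(p,q_2)$, contradicting \eqref{cond:defhipa}.

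The main obstacle I expect is the careful handling of the degenerate symplectic form on $\hipa$ when $\biegun\in\hipa$ (so $\Rad(\hipa)=\biegun$), together with the bookkeeping of which lines of $\starof({\cal Q})$ actually appear after the possible deletion of $\LineOn(p,\biegun)$ in \ref{prop:stars}, and — in the $p\in\hipa$ direction — the interaction between lines on $\hipa$ and affine lines (where $A^\infty$ being a line forces regularity of the containing plane, so the naive "improper point in $X$" dictionary needs the extra convention). Dimension hypothesis $\dim(\fixV)>4$ is used exactly to guarantee $q_1^\perp\cap q_2^\perp$ is nonempty and large enough to dodge $p^\perp\cap\hipa$ and $\biegun$; without it the codimension-$2$ subspace could collapse. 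Once these points are pinned down, both implications are short, relying only on \ref{prop:stars} and \ref{lem:char:notadjac}.
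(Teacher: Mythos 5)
Your direction (ii)$\implies$(i) is essentially the paper's argument: you produce, for $p\notin\hipa$, improper points $q\perp q_1,q_2$ with $q_1\not\perp q_2$, all off $p^\perp$ and distinct from $\biegun$, using $\dim(\fixV)>4$ in the same way. (The paper's order of choices is slightly cleaner --- it takes $q:=L^\infty$ for an $L$ already in the star, so $q\notin p^\perp$ is automatic, and then picks $q_1\in q^\perp\cap\hipa\setminus p^\perp$ and $q_2\in q^\perp\cap\hipa$ off $p^\perp\cup q_1^\perp$; your order forces the extra counting you only sketch.) The genuine gap is in (i)$\implies$(ii). Your ``clean bookkeeping'' --- $L\not\adjac L'$ iff $\widehat L,\widehat{L'}\in X:=p^\perp\cap\hipa$ --- is false when $p\in\hipa$: every affine line $L$ of the star then satisfies $\widehat L=L\cap\hipa=p$, and $p\in X$ always, so your dictionary declares all affine lines of the star pairwise non-adjacent, which is wrong (two of them are adjacent whenever the horizon line of their joint plane is regular), and it also declares a regular line on $\hipa$ non-adjacent to an affine line, contradicting the subtlety you yourself had just identified. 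The identity $(L+L')^\infty=\LineOn({\widehat L},{\widehat{L'}})$ on which the dictionary rests holds only for $p\notin\hipa$; for $p\in\hipa$ the two improper points coincide and the horizon line of $L+L'$ is not determined by them. So non-adjacency among affine lines through $p\in\hipa$ is intrinsically a condition on the pair, and the transitivity-like property in (ii) does not come for free.

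The missing step (which is the actual content of the paper's proof of this direction) is: for affine $L,L_1,L_2$ in the star with $L\not\adjac L_1,L_2$, put $M_i:=(L+L_i)^\infty$; non-adjacency forces the $M_i$ to be isotropic lines through $p$, hence $p\perp M_1+M_2$; and since $(L_1+L_2)^\infty$ is contained in $M_1+M_2$ (all three horizon lines lie in the plane $(L+L_1+L_2)\cap\hipa$, or else $M_1=M_2=(L_1+L_2)^\infty$), one gets $p\perp(L_1+L_2)^\infty$ and thus $L_1\not\adjac L_2$ by \ref{lem:char:notadjac}. Together with the observation that $L\not\adjac L_1,L_2$ with $L\subset\hipa$ forces $L_1,L_2\subset\hipa$ (whence $L_1\not\adjac L_2$ by \ref{rem:regplanes:onhor}), this completes the forward implication; your proposal needs to be repaired along these lines.
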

\begin{proof}
  Let $L\in\starof({\cal Q})$. Then $p\in L$ by \ref{prop:stars}.
  In view of \ref{lem:char:notadjac} it suffices to consider the following cases.
  \begin{sentences}\itemsep-2pt
  \item
    Let $p\notin\hipa$. Set $q := L^\infty$.
    Take $q_1\in q^\perp\cap\hipa$ with $q_1\notin p^\perp$
    and $q_2\in q^\perp\cap\hipa$ with $q_2\notin p^\perp, q_1^\perp$. The later
    is doable when $\dim(\fixV)>4$.
    So, we have $q_i\neq \biegun$, $i=1,2$ for if not we would have $q_1\perp q_2$, a
    contradiction.
    Set $L_i := p + q_i$ for $i=1,2$. 
    Then $L\not\adjac L_1,L_2\in\starof({\cal Q})$ and $L_1 \adjac L_2$.
  \item
    Let $p\in\hipa$, $L\not\subset\hipa$, and $L\not\adjac L_1,L_2\in\starof({\cal Q})$. 
    From \ref{lem:char:notadjac} we get $L_1,L_2\not\subset\hipa$.
    Set $A_i := L + L_i$ and $M_i := A_i^\infty$. From assumption, the $M_i$ are
    nonregular lines on $\hipa$ and therefore $p\perp M_i$; this yields $p\perp M_1 + M_2$.
    If $M_1=M_2$ then $(L_1+L_2)^\infty = M_1$, so $L_1\not\adjac L_2$.
    Assume that $M_1\neq M_2$. Then there is a line $M_0 := (L_1 + L_2)\cap(M_1 + M_2)$.
    Clearly, $M_0 = (L_1 + L_2)^\infty$ and $p\in M_0\perp p$, so $M_0$
    is nonregular. Consequently, $L_1\not\adjac L_2$.
  \item
    Let $p\in\hipa$, $L\subset \hipa$, and $L\not\adjac L_1,L_2\in\starof({\cal Q})$.
    From \ref{lem:char:notadjac} we get $L_1,L_2\subset\hipa$, and 
    then $L_1 \not\adjac L_2$ follows.
  \end{sentences}
  This closes our proof.
\end{proof}

\begin{lem}\label{lem:gras2hor3spx}
  If\/ $\dim(\fixV) = 4$, then condition \eqref{cond:defhipa} in {\upshape \ref{lem:gras2hor}}
  is always valid.
\end{lem}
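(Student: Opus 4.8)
The plan is to retrace the proof of \ref{lem:gras2hor}, using throughout that $\dim(\fixV)=4$ forces $\dim(\fixproj)=3$, so $\hipa$ is a \emph{plane} and $\biegun\in\hipa$. When $p\in\hipa$, condition \eqref{cond:defhipa} is obtained exactly as in the second and third items of the proof of \ref{lem:gras2hor}; that reasoning rests only on \ref{lem:char:notadjac} and \ref{prop:stars}, not on the hypothesis $\dim(\fixV)>4$, so it carries over unchanged. All the content therefore lies in the case $p\notin\hipa$, where the generic proof built a counterexample by choosing a point of $q^\perp\cap\hipa$ lying outside three hyperplanes of $\hipa$ --- impossible once $\hipa$ is a plane --- and one must instead show that no counterexample can exist.

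So I would assume $p\notin\hipa$. By \ref{prop:stars} one has $\starof({\cal Q})=\starofr(p)\setminus\set{\LineOn(p,\biegun)}$ (whether ${\cal Q}$ is a $\pencr(p,A)$ or a $\pencraf(p,A)$), so every line in $\starof({\cal Q})$ is a regular line through $p$ meeting $\hipa$ in a point distinct from $\biegun$. The fact I would isolate first is: \emph{for every point $r\in\hipa$ with $r\neq\biegun$ one has $r^\perp\cap\hipa=\LineOn(r,\biegun)$}. This is quick: $r^\perp$ is a hyperplane of $\fixproj$ with $r^\perp\neq\hipa$ (the pole of $\hipa$ being $\biegun\neq r$), so $r^\perp\cap\hipa$ is a hyperplane of the plane $\hipa$, namely a line; it contains the selfconjugate point $r$, and it contains $\biegun$ since $\biegun=\Rad(\hipa)\subseteq r^\perp$, hence it equals $\LineOn(r,\biegun)$.

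Granting this, let $L,L_1,L_2\in\starof({\cal Q})$ be lines --- pairwise distinct, as is implicit in \ref{lem:char:notadjac} --- with $L\not\adjac L_1$ and $L\not\adjac L_2$, and put $q=L^\infty$, $q_i=L_i^\infty\in\hipa\setminus\set{\biegun}$. Since $p\notin\hipa$, \ref{lem:char:notadjac}(i) says these hypotheses mean $q\perp q_1$ and $q\perp q_2$, i.e.\ $q_1,q_2\in q^\perp\cap\hipa=\LineOn(q,\biegun)$; as $q_1\neq\biegun$ this line also equals $\LineOn(q_1,\biegun)=q_1^\perp\cap\hipa$, so $q_2\in q_1^\perp$, that is $q_1\perp q_2$, and \ref{lem:char:notadjac}(i) again yields $L_1\not\adjac L_2$. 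This settles the case $p\notin\hipa$ and hence the lemma. The step requiring the most thought is not a computation but the observation above: because $\hipa$ is a plane, $q^\perp\cap\hipa$ is forced to be $\LineOn(q,\biegun)$, so that conjugacy among the non-radical points of $\hipa$ is merely ``lying on a common line through $\biegun$'', visibly an equivalence relation --- and condition \eqref{cond:defhipa} is exactly the transitivity of this relation read through \ref{lem:char:notadjac}(i). The only other care needed is the (customary) restriction to distinct lines, so that \ref{lem:char:notadjac} applies.
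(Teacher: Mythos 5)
Your proof is correct and follows essentially the same route as the paper's: the case $p\in\hipa$ is disposed of by the dimension-free part of the proof of \ref{lem:gras2hor}, and for $p\notin\hipa$ both arguments reduce to the fact that in a $3$-dimensional $\fixproj$ the set $q^\perp\cap\hipa$ is a single isotropic line through $q$ (which you identify explicitly as $\LineOn(q,\biegun)$, while the paper notes $q\in K:=q^\perp\cap\hipa$ and invokes that nonregular lines on $\hipa$ are totally isotropic), whence $q\perp q_1,q_2$ forces $q_1\perp q_2$ and \ref{lem:char:notadjac} gives $L_1\not\adjac L_2$.
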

\begin{proof}
  Let ${\cal Q}$ be as assumed in \ref{lem:gras2hor} and let $L, L_1,L_2\in\starof({\cal Q})=\starofr(p)$
  with  $L\not\adjac L_1,L_2$.
  Set $q := L^\infty$, $q_i := L_i^\infty$ for $i=1,2$.
  By \ref{lem:char:notadjac}, $q\perp q_1,q_2$ which, in view of \ref{lem:reg:plane},
  means that planes $L+L_1$, $L+L_2$ are nonregular. 
  
  If $p=\biegun$, then the plane $L_1+L_2$ is nonregular as no plane through $\biegun$ is regular,
  hence $L_1\not\adjac L_2$.
  
  Now assume that $p\notin\hipa$. According to
  the definition of $\starof({\cal Q})$ there is always 
  a regular plane $A'$ such that $L\subset A'$. Hence $q \neq \biegun$ by \ref{fct:regplanes:przezb}.
  In 3-space $\fixproj$ the subspace $K := q^\perp\cap\hipa$ is a line on $\hipa$. Note that $q_1,q_2\in K$ and also
  $q\in K$ as $q\in\hipa$ and thus $q\in q^\perp$. This means that $K$ is isotropic so, $q_1\perp q_2$.
  From \ref{lem:char:notadjac} we get $L_1\not\adjac L_2$.

  Finally, if $\biegun\neq p\in\hipa$, then we have three cases:
  \begin{sentences}\itemsep-2pt
  \item
    Let $L_1, L_2\subset\hipa$. 
    Clearly $L_1\not\adjac L_2$.
  \item
    Let $L_1\subset\hipa$ and $L_2\not\subset\hipa$. 
    If $L\subset\hipa$, then the plane
    $L+L_2$ is regular by \ref{lem:reg:plane} which contradicts our assumptions that $L\not\adjac L_2$. 
    If $L\not\subset\hipa$, then the plane $L+L_1$ is regular, again a contradiction.
  \item
    Let $L_1, L_2\not\subset\hipa$.
    If $L\subset\hipa$, then both planes $L+L_1$, $L+L_2$ are regular which is impossible by
    our assumptions that $L\not\adjac L_1,L_2$.
    If $L\not\subset\hipa$, then we have two distinct isotropic lines $M_i := (L+L_i)\cap\hipa$, $i=1,2$
    on $\hipa$. Since $p\in M_1, M_2$ and $M_1\cap M_2=\biegun$, a contradiction and the proof is complete.
  \end{sentences}
\end{proof}

As it has been shown in \ref{lem:gras2hor3spx} we need to treat the case $\dim(\fixV) = 4$ 
separately. 
Recall that $\fixafr_2\cong\fixafr_1^\perp$ and
$\fixafrb_2\cong\fixafrb_1^\perp$. So, we can state that the dual of
$\fixaf$ is definable in $\fixafr_2$ and $\fixafrb_2$ by \ref{thm:regb2af} and \ref{thm:reg2af}. 
Hence $\fixproj$ is definable in $\fixafr_2$. Moreover, the horizon $\struct{\hipa,\perp_{\hipa}}$
is also definable, so is $\fixafr_1$.

In \ref{prop:stars} we have defined points of $\fixproj$ and thanks to
\ref{lem:gras2hor} we are able to distinguish
regular and nonregular points, all strictly in languages of $\fixafr_2$
and of $\fixafrb_2$. So, we have $\struct{\hipa,\perp_{\hipa}}$ reconstructed. 
Gathering all together we get

\begin{thm}\label{thm:regl2regp}
    The structure $\fixafr_1$ can be defined in terms of both\/ $\fixafr_2$
    and\/ $\fixafrb_2$.
\end{thm}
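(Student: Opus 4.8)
The plan is to assemble \ref{thm:regl2regp} from the machinery already developed in this section, treating the two target structures $\fixafr_2$ and $\fixafrb_2$ in parallel since all the auxiliary lemmas come in matched pairs. The overall strategy is: (1) recover the point set of $\fixproj$ minus $\biegun$ as a set of stars of regular lines; (2) recover incidence between these points and the lines of $\fixafr_2$ (resp.\ $\fixafrb_2$), thereby reconstructing a point--line structure isomorphic to $\fixafr_1$ (resp.\ $\fixafrb_1$); (3) invoke \ref{thm:reg2af} and \ref{thm:regb2af} to get $\fixaf$ and $\fixproj$, and \ref{lem:gras2hor} together with the $\perp_{\hipa}$-data carried by adjacency to recover the horizon $\struct{\hipa,\perp_{\hipa}}$; (4) reassemble $\fixafr_1$ from $\fixaf$, $\fixproj$ and $\struct{\hipa,\perp_{\hipa}}$ exactly as in \ref{cor:reg2co-sie-da}.

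More concretely, first I would note that $\wspolin_{\fixafr_2}$ and $\wspolin_{\fixafrb_2}$ are definable by the explicit formula \eqref{def:gras2pek:proj}, and by \ref{cor:reg2peki} and \ref{lem:reg2peki} their equivalence classes are precisely the nonempty regular pencils $\pencr(p,A)$, resp.\ $\pencraf(p,A)$. Given such a pencil ${\cal Q}$, the set $\starof({\cal Q})$ is definable purely from the triangle relation $\TRG$ and $\wspolin$ of the ambient structure, and \ref{prop:stars} identifies $\starof({\cal Q})$ with a star $\starofr(p)$ (resp.\ $\starofraf(p)$), with the harmless single exception of a line $\LineOn(p,\biegun)$ being omitted when $\biegun\in\hipa$ and $p\notin\hipa$. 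Two distinct pencils give the same star iff their vertices coincide, so the point set $\{p\in\fixproj : p\neq\biegun\}$ (more precisely, the points on at least one regular plane, which by \ref{rem:regplanes:onhor} and \ref{fct:regplanes:przezb} is exactly $\fixproj\setminus\{\biegun\}$ when $\biegun\in\hipa$ and $\fixproj$ itself when $\biegun\notin\hipa$) is recovered as the family of these stars, and ``$p$ lies on $L$'' becomes ``$L\in\starof({\cal Q})$ for some pencil ${\cal Q}$ whose associated star is $p$.'' This yields a structure isomorphic to $\fixafr_1$ or $\fixafrb_1$ restricted to the recovered points; the exceptional point $\biegun$ (when $\biegun\notin\hipa$) sits on no regular line, so is already accounted for by \ref{lem:regul3biegun}, and when $\biegun\in\hipa$ it is recovered as a point of the affine space obtained later.

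Next I would recover regularity of points via \ref{lem:gras2hor} (condition \eqref{cond:defhipa} characterizes $p\in\hipa$ purely in terms of $\starof({\cal Q})$ and the binary adjacency $\adjac$, which is itself definable in $\fixafr_2$/$\fixafrb_2$ as ``$L_1,L_2\subset A$ for some regular plane $A$''), with \ref{lem:gras2hor3spx} handling the degenerate dimension $\dim(\fixV)=4$ where condition \eqref{cond:defhipa} holds vacuously — in that low-dimensional case one instead falls back on the duality $\fixafr_2\cong\fixafr_1^\perp$, $\fixafrb_2\cong\fixafrb_1^\perp$ to transport \ref{thm:reg2af}, \ref{thm:regb2af} directly, recovering the dual of $\fixaf$, hence $\fixaf$, hence $\fixproj$, and with it the hyperplane $\hipa$ and the restriction $\perp_{\hipa}$. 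In the general case, once regular vs.\ nonregular points are separated, $\hipa$ is the set of nonregular points, and the symplectic polarity $\perp_{\hipa}$ is read off from adjacency via \ref{lem:char:notadjac}: for $q_1,q_2\in\hipa$ one has $q_1\perp q_2$ iff for the (affine) point $p$ on some common regular line configuration the relevant planes fail to be regular — concretely, picking affine $p$ and regular affine lines $L_i$ with $L_i^\infty=q_i$, one has $q_1\perp q_2\iff L_1\not\adjac L_2$. Finally, having recovered $\fixaf$, $\fixproj$, $\struct{\hipa,\perp_{\hipa}}$, and the relation $\mathord{\perp}\cap(\reghipy_1\times\hipa)$ (which is encoded by: a regular affine line $L$ through a regular point $a$ has $L^\infty\in a^\perp\cap\hipa$ iff $L$ is nonregular in $\fixaf$, recoverable since $\linesr$ and $\lines$ are both in hand after \ref{thm:reg2af}), \ref{cor:reg2co-sie-da} shows $\fixafr_1$ is definable, completing the proof.

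The main obstacle I expect is not any single lemma but the bookkeeping of exceptional configurations: the omitted line $\LineOn(p,\biegun)$ in \ref{prop:stars} when $\biegun\in\hipa$ and $p\notin\hipa$ must be shown not to obstruct recovering incidence (it is harmless because that line is nonregular, hence absent from every regular pencil anyway, so the recovered star differs from $\starofraf(p)$ only by a line which is not a point of $\fixafr_2/\fixafrb_2$); the subtle distinction between $\fixafr_2$ and $\fixafrb_2$ lines (\ref{lem:pencils}); and above all the split at $\dim(\fixV)=4$, where the uniform argument degenerates and one must argue separately — fortunately in that case $\hipa$ is a plane with $\biegun\in\hipa$, $\linesrb^\perp=\linesrb$, and the whole structure is self-dual enough that \ref{thm:regb2af}/\ref{thm:reg2af} apply through $\perp$ with no extra work. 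Verifying that in every case the reconstruction is expressible by first-order formulas in the primitive incidence relation of $\fixafr_2$ (resp.\ $\fixafrb_2$) — as opposed to merely being abstractly determined — is the step requiring the most care, but each ingredient has already been written as an explicit formula in the preceding lemmas, so this is a matter of composition rather than new ideas.
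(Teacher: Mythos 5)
Your proposal is correct and follows essentially the same route as the paper: points of $\fixproj\setminus\{\biegun\}$ are recovered as stars of pencils via \ref{prop:stars}, regular and nonregular points are separated by \ref{lem:gras2hor} (with \ref{lem:char:notadjac} supplying $\perp_{\hipa}$), the case $\dim(\fixV)=4$ is handled by the duality $\fixafr_2\cong\fixafr_1^\perp$, $\fixafrb_2\cong\fixafrb_1^\perp$, and the pieces are assembled through \ref{thm:reg2af}, \ref{thm:regb2af} and \ref{cor:reg2co-sie-da}. One small slip: when $\biegun\in\hipa$ and $p\notin\hipa$ the omitted line $\LineOn(p,\biegun)$ is in fact regular by \ref{lem:regul3biegun}, not nonregular as you claim; its omission is harmless for the other reason you give, namely that it lies outside $\regbhipy_2$ and $\linesrb$ and so is not a point of either structure.
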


\bigskip
\begin{small}
\noindent
Authors' address:
\\
Krzysztof Pra{\.z}mowski,
Mariusz {\.Z}ynel
\\
Institute of Mathematics, University of Bia{\l}ystok
\\
ul. Akademicka 2, 15-267 Bia{\l}ystok, Poland
\\
\verb+krzypraz@math.uwb.edu.pl+,
\verb+mariusz@math.uwb.edu.pl+
\end{small}


\begin{thebibliography}{9}\footnotesize\itemsep0pt
\bibitem{bachman}
  {\sc Bachmann, F.},
  {\it Aufbau der Geometrie aus dem Spiegelungsbegriff},
  2. Auflage, Springer-Verlag, Berlin, 1973.

\bibitem{buekenhout}  
  {\sc Buekenhout J., Cohen A.}, 
  \newblock {\it Diagram Geometry}, 
  \newblock (to appear).

\bibitem{cohen:handbook}
  {\sc Cohen M.~A.}
  \newblock Point-line spaces related to buildings.
  \newblock In {\em Handbook of incidence geometry}, F.~Buekenhout, Ed.
    North-Holland, Amsterdam, 1995, pp.~647--737.
    
\bibitem{copol:hall}
  {\sc Hall, J. I.},
  {\it Classifying copolar spaces and graphs},
  Quart. J. Math. Oxford (2), {\bf 33} (1982), 421--449.
  
\bibitem{hirs}  
  {\sc Hirschfeld J. W. W. P.}, 
  \newblock {\it Projective Geometries over Finite Fields}, 
  \newblock Clarendon Press, Oxford, 1998.
  
\bibitem{embcopol}
  {\sc Lef{\`e}vre-Percsy C.},
  \newblock {\it Copolar spaces fully embedded in projective spaces},
  \newblock Ann. Discrete Math. {\bf 18} (1983), 553--566.

\bibitem{mark}  
  {\sc Pankov M.}, 
  \newblock {\it Grassmannians of classical buildings}, 
  \newblock Algebra and Discrete Mathematics Vol. 2,
  \newblock World Scientific, New Jersey, 2010.
 
\bibitem{polargras}
  {\sc Pankov M., Pra¿mowski K., ¯ynel M.},
  \newblock {\it Geometry of polar Grassmann spaces},
  \newblock Demonstratio Math. {\bf 39} (2006), no. 3, 625--637.
  
\bibitem{grasregul}
  {\sc Pra\.zmowska M., Pra\.zmowski K., \.Zynel M.},
  \newblock {\it Grassmann spaces of regular subspaces},
  \newblock J. Geom. {\bf 97} (2010), 99-123. 

\bibitem{perpadeh}
  {\sc Pra\.zmowski K., \.Zynel M.},
  \newblock {\it Orthogonality of subspaces in metric-projective geometry},
  \newblock Adv. Geom. {\bf 11} (2011), no. 1, 103-116.
  
\bibitem{tits}
  {\sc Tits, J.}
  \newblock {\em Buildings of spherical type and finite {BN}-pairs}, vol.~386 of {\em Lecture Notes in Mathematics}.
  \newblock Springer, Berlin, 1974.

\bibitem{ueberberg}  
  {\sc Ueberberg J.}, 
  \newblock {\it Foundations of Incidence Geometry: Projective and Polar Spaces}, 
  \newblock Springer-Verlag, Berlin, 2011.

\bibitem{maldeghem}  
  {\sc Van Maldeghem H.}, 
  \newblock {\it Generalized Polygons}, 
  \newblock Birkh\"auser, Basel, 1998.
\end{thebibliography}
\end{document}